\documentclass[11pt]{siamltex}
\usepackage{amsmath}
 \setlength{\textwidth}{38pc}

%==================================================================
\usepackage{bm}
\usepackage{amssymb,version}
\usepackage{cases}
\usepackage{color}
\usepackage{verbatim}

\usepackage{graphicx}% Figure
\usepackage{subfigure}% Subfigure
\allowdisplaybreaks
\begin{document}
    \title{Stability and error analysis of IMEX SAV schemes for the magneto-hydrodynamic equations 
\thanks{This work is supported in part  by the National Natural Science Foundation of China  grants  11901489 and 11971407,  NSF grant  DMS-2012585 and AFOSR Grant FA9550-20-1-0309.}}

    \author{ Xiaoli Li\thanks{School of Mathematics, Shandong University, Jinan, Shandong, 250100, P.R. China. Email: xiaolisdu@163.com}.
         \and Weilong Wang\thanks{School of Mathematical Sciences and Fujian Provincial Key Laboratory on Mathematical Modeling and High Performance Scientific Computing, Xiamen University, Xiamen, Fujian, 361005, China. Email: wwlmath@foxmail.com}.                
        \and Jie Shen\thanks{Corresponding Author. Department of Mathematics, Purdue University, West Lafayette, IN 47907, USA. Email: shen7@purdue.edu}.
}

\maketitle

\begin{abstract}
We construct and analyze first- and second-order implicit-explicit (IMEX) schemes based on the scalar auxiliary variable (SAV) approach for the magneto-hydrodynamic equations. These schemes are linear,  only require solving a sequence of linear differential equations with constant coefficients at each time step, and  are unconditionally  energy stable. We  derive rigorous error estimates for the velocity, pressure and magnetic field of the first-order scheme in the two dimensional case without any condition on the time step. Numerical examples are presented to validate the proposed schemes. 
\end{abstract}

 \begin{keywords}
Magneto-hydrodynamic equations; implicit-explicit (IMEX) schemes;   energy stability; error estimates
 \end{keywords}
   \begin{AMS}
 65M12, 65M15, 76E25.
    \end{AMS}
\markboth{XIAOLI LI, JIE SHEN AND WEILONG WANG} {IMEX SAV schemes for the MHD equations}
%==================================================================
 \section{Introduction}
We consider in this paper numerical approximation of the following magneto-hydrodynamic (MHD) equations \cite{sermange1983some}: 
  \begin{subequations}\label{e_MHD_model}
    \begin{align}
     \frac{\partial \textbf{u} }{\partial t} + ( \textbf{u} \cdot \nabla ) \textbf{u} 
      - \nu \Delta \textbf{u} + \nabla p - \alpha ( \nabla \times \textbf{b} ) \times \textbf{b} = 0
     \quad &\  \rm{in} \ \Omega\times J,    \label{e_MHD_modelA} \\
   \frac{\partial \textbf{b} }{\partial t} + \eta \nabla \times ( \nabla \times \textbf{b} ) + \nabla \times ( \textbf{b} \times \textbf{u} ) = 0       \quad &\  \rm{in} \ \Omega\times J,    \label{e_MHD_modelB} \\ 
      \nabla\cdot\textbf{u}=0 , \  \nabla\cdot\textbf{b}=0
      \quad &\ \rm{in} \ \Omega\times J,  \label{e_MHD_modelC} 
      \end{align}
       \end{subequations}
 with boundary and initial conditions
  \begin{align*}
  &   \textbf{u}=\textbf{0}, \ \ \textbf{b} \cdot \textbf{n} =0, \ \ \textbf{n} \times  ( \nabla \times \textbf{b} ) =0  \quad  \rm{on} \ \partial\Omega\times J, \\
  &   \textbf{u} ( \textbf{x}, 0 ) = \textbf{u}^0 ( \textbf{x} ), \quad \textbf{b} ( \textbf{x}, 0 ) = \textbf{b}^0 ( \textbf{V} ) \quad  \rm{in} \ \Omega,
    \end{align*}
 where $\Omega$ is an open bounded domain in $\mathbb{R}^d \;(d=2,3)$ with a sufficiently smooth boundary $\partial \Omega$, 
$\textbf{n}$ is the unit outward normal  of the domain $\Omega$, $ J=(0,T] $, $(\textbf{u},p, \textbf{b} )$ represent respectively the unknown  velocity, pressure and magnetic field.  The parameters $ \nu $ and $ \eta $ are kinematic viscosity and magnetic diffusivity, respectively, and $ \alpha = 1/ (4 \pi \mu \rho ) $ with $ \mu $ as the magnetic permeability and $ \rho $ as the fluid density.

The   MHD system is used to describe the interaction between a viscous, incompressible, electrically conducting fluid and an external magnetic field.  When a conducting fluid is placed in an existing magnetic field, the fluid motion produces electric currents which in turn create forces on the fluid and change the magnetic field itself.   It has been widely used in many science and engineering applications, such as liquid metal cooling for nuclear reactors, sustained plasma confinement for controlled thermonuclear fusion, etc \cite{goldstein2002classical,davidson2002introduction}. The mathematical theory of MHD equations can be found in \cite{sermange1983some}.

Numerical approximation of the MHD equations is challenging, as it involves delicate nonlinear coupling  between the velocity and magnetic field in addition to the difficulties associated with the Navier-Stokes equations and Maxwell equations. 
There exists a large literature devoted to constructing compatible  spatial discretization for the MHD equations, see  \cite{yee1966numerical,babuvska1971error,nedelec1980mixed,Gerbeau2006Mathematical,brezzi2012mixed} and related references. In this paper, we are only concerned with time discretization, which can be coupled with any well developed compatible spatial discretization. %For examples, Gunzburger et al. in \cite{gunzburger1991existence}   considered a stationary MHD model and derived optimal  error estimates for its finite element discretization;  Meir et al. \cite{meir1999analysis} reduced the MHD model to a system of integro-differential equations and established optimal error estimates for its finite-element approximation; 

The MHD equations \eqref{e_MHD_model} is energy dissipative. More precisely, taking the inner products of \eqref{e_MHD_modelA} and  \eqref{e_MHD_modelB} with $\textbf{u}$ and $\alpha\textbf{b}$, respectively, summing up the results,  we find that the  nonlinear terms  do not contribute to the energy and that  the following energy dissipation law holds:
\begin{equation}\label{engdiss}
 \frac d{dt}E(\textbf{u},\textbf{b})=-\nu\|\nabla \textbf{u}\|^2-\alpha\eta\|\nabla\times \textbf{b}\|^2
\quad\text{with }\; 
 E(\textbf{u},\textbf{b})=\frac 12\|\textbf{u}\|^2+\frac{\alpha}2\|\textbf{b}\|^2.
\end{equation}
It is thus desirable to construct numerical schemes which  satisfy a discrete energy dissipation law.

Most existing work use fully implicit or semi-implicit treatments for the nonlinear terms so that the effect of nonlinear coupling can cancel each other and a  discrete energy dissipation law can be derived. However, one needs to solve a nonlinear system or a coupled linear system with time dependent coefficients at each time step. For examples, 
Armero and Simo developed in \cite{armero1996long} energy dissipative schemes for an abstract evolution equation with applications to the incompressible MHD equations; 
Tone \cite{tone2009long} considered an implicit Euler scheme for the 2D MHD equations and established a uniform H2 stability; Layton et al. constructed in \cite{layton2014numerical} two partitioned methods for uncoupling evolutionary MHD flows; Hiptmair et al. \cite{hiptmair2018fully} developed a fully divergence-free finite element method for MHD equations with a semi-implicit treatment of the nonlinear terms; Zhang et al. \cite{zhang2018second} proposed a second order linear BDF scheme with an extrapolated treatment for the nonlinear terms and proved its unconditionally stability and convergence, cf. also \cite{zhang2015unconditional}; And most recently, Li et al. \cite{li2020convergent} proposed a fully discrete linearized H1 conforming Lagrange finite element method, and derived the convergence based on the reg- ularity of the initial conditions and source terms without extra assumptions on the regularity of the solution. To alleviate the cost of solving fully coupled systems at each time step, Badia et al. \cite{badia2013unconditionally} developed an operator splitting algorithm by a stabilized finite element formulation based on projections; Choi and Shen \cite{choi2016efficient} constructed several efficient splitting schemes based on the standard and rotational pressure-correction schemes with a semi-implicit treatment of the nonlinear terms for the MHD equations.

%The crucial goal in algorithm design for the MHD model is to construct efficient and easy-to-implement numerical scheme and preserve the energy dissipation law at the discrete level.
 From a computational point of view, it is desirable for a numerical scheme  to treat the nonlinear term explicitly while still being energy dissipative, so that one only needs to solve simple linear equations with constant coefficients at each time step. However, with a direct  explicit treatment of the nonlinear terms, their energy contribution no longer vanishes, so it becomes very difficult to derive a uniform bound for the numerical solution.  Liu and Pego \cite{liu2010stable} constructed a first-order scheme with fully explicit treatment of the nonlinear terms and showed that its numerical solution is bounded with the time step sufficiently small, but their scheme is not shown to be energy dissipative. The recently proposed scalar auxiliary variable (SAV) approach \cite{shen2018scalar,shen2018convergence,shen2019new} provides a general approach to construct linear, decoupled  unconditionally energy stable schemes for gradient flows. The approach has been extended to Navier-Stokes equations in  \cite{lin2019numerical}. However, the scheme in   \cite{lin2019numerical} requires solving a nonlinear algebraic equation whose well posedness is not guaranteed.  We introduced in \cite{li2020new} a different SAV approach which leads to purely linear and unconditionally stable schemes for the Navier-Stokes equations, and proved corresponding error estimates. 

 The aim of this work is to extend the approach proposed in  \cite{li2020new} to   the MHD equations which are much more complicated with  nonlinear couplings between the velocity and  magnetic fields. Our main contributions are two-folds:
 \begin{itemize}
\item   We construct first- and second-order IMEX SAV schemes for the MHD equations and show that they are unconditionally energy stable. These schemes only require solving a sequence of differential equations with constant coefficients at each time step so they are very efficient and easy to implement.
\item We establish rigorous error estimates for the first-order scheme in the two-dimensional case without any condition on the time step.  
 \end{itemize}
 Compared to the Navier-Stokes equations or Maxwell's equations, the error analysis  for the MHD equations is much more involved due to the nonlinear coupling terms. Our error analysis uses essentially the unconditional bounds of the numerical solution that we derive for our SAV schemes. 
 To the best of our knowledge, this is the first linear, unconditional energy stable and convergent schemes with fully explicit treatment of nonlinear terms for the MHD equations.

The paper is organized as follows. In Section 2,  we construct our IMEX SAV schemes and prove their stability. In Section 3,  we carry out a rigorous error analysis for the first-order IMEX SAV scheme in the two-dimensional case. We present some numerical experiments to validate our schemes in Section 4, and conclude with a few remarks in Section 5. 

%==================================================================
%==================================================================
%==================================================================
  \section{The SAV schemes and their energy stability}
 In this section, we construct first- and second-order IMEX schemes based on the SAV approach  for the MHD equations, and show that they are unconditionally energy stable.
 
 We introduce a scalar auxiliary variable (SAV):
 \begin{equation}\label{e_definition of q}
\aligned
q(t)=\rm{exp} (-\frac{t}{T}),
\endaligned
\end{equation} 
and expand the system \eqref{e_MHD_model} as follows: 
  \begin{numcases}{}
 \frac{\partial \textbf{u}}{\partial t}
     -\nu\Delta\textbf{u}+\nabla p + \exp( \frac{t}{T} ) q(t) ( \textbf{u}\cdot \nabla  \textbf{u}- \alpha  ( \nabla \times \textbf{b} ) \times \textbf{b}) = 0,  \label{e_MHD_model_SAV1} \\
   \frac{\partial \textbf{b} }{\partial t} + \eta \nabla \times ( \nabla \times \textbf{b} ) +  \exp( \frac{t}{T} ) q(t) \nabla \times ( \textbf{b} \times \textbf{u} ) = 0,    \label{e_MHD_model_SAV2}  \\
      \nabla\cdot\textbf{u}=0 , \  \nabla\cdot\textbf{b}=0, \label{e_MHD_model_SAV3} \\      
  \frac{\rm{d} q}{\rm{d} t}=-\frac{1}{T}q + \exp( \frac{t}{T} ) \big(  (\textbf{u}\cdot \nabla  \textbf{u}, \textbf{u} )  - \alpha  \left(  ( \nabla \times \textbf{b} ) \times \textbf{b}, \textbf{u} \right)  
  + \alpha  \left(  \nabla \times ( \textbf{b} \times \textbf{u} ), \textbf{b} \right) \big).     \label{e_MHD_model_SAV4}
\end{numcases}
 Since the sum of the nonlinear terms  in   \eqref{e_MHD_model_SAV4} is zero so \eqref{e_MHD_model_SAV4} is equivalent to the time derivative of \eqref{e_definition of q}.
Hence, with $q(0)=1$, the exact solution of \eqref{e_MHD_model_SAV4} is given by \eqref{e_definition of q}, so
 that  \eqref{e_MHD_model_SAV1}-\eqref{e_MHD_model_SAV3} is exactly the same as \eqref{e_MHD_model}. Therefore, the above system is equivalent to the original system. Note that we  have, in addition to the original energy law \eqref{engdiss}, an additional energy law
 \begin{equation}\label{engdiss2}
 \frac 12\frac d{dt}(\|\textbf{u}\|^2+{\alpha}\|\textbf{b}\|^2+|q|^2) =-\nu\|\nabla \textbf{u}\|^2-\alpha\eta\|\nabla\times \textbf{b}\|^2 -\frac 1T|q|^2.
\end{equation}
Note that, unlike in the original SAV approach, the SAV $q(t)$ is related to the nonlinear part of the free energy, here the SAV $q(t)$ is  pure artificial but will allow us to construct unconditional energy stable, with respect to the energy in \eqref{engdiss2}, schemes with fully explicit treatment of the nonlinear terms.

\subsection{The IMEX SAV schemes}
We set $$\Delta t=T/N,\ t^n=n\Delta t, \ d_t g^{n+1}=\frac{g^{n+1}-g^n}{\Delta t},
\ {\rm for} \ n\leq N.$$

\textbf{Scheme \uppercase\expandafter{\romannumeral 1} (first-order):} Find ($ \textbf{u}^{n+1}, p^{n+1}, q^{n+1},\textbf{b}^{n+1} $) by solving
    \begin{eqnarray}
   && d_t \textbf{u}^{n+1} - \nu \Delta \textbf{u}^{n+1} + \nabla p^{n+1} =  \exp( \frac{t^{n+1} }{T} ) q^{n+1} (\alpha (\nabla \times \textbf{b}^n)  \times \textbf{b}^n - \textbf{u}^{n}\cdot \nabla \textbf{u}^{n}), \label{e_SAV_scheme_first_u} \\
 &&  d_t \textbf{b}^{n+1} + \eta \nabla \times ( \nabla \times \textbf{b}^{n+1} ) +  \exp( \frac{t^{n+1} }{T} ) q^{n+1} \nabla \times ( \textbf{b}^n \times \textbf{u}^n ) = 0, \label{e_SAV_scheme_first_b}\\   
 &&  \nabla\cdot\textbf{u}^{n+1} =0, 
\ \ \ 
\nabla\cdot\textbf{b}^{n+1} =0, \label{e_SAV_scheme_first_div} \\
  &&   \textbf{u}^{n+1} |_{\partial \Omega} =\textbf{0}, \ \ \textbf{b}^{n+1} \cdot \textbf{n} |_{\partial \Omega}=0, \ \ \textbf{n} \times  ( \nabla \times \textbf{b}^{n+1} ) |_{\partial \Omega} =0,  \label{e_SAV_scheme_first_boundary} \\
&&  d_t q^{n+1} = -\frac{1}{T}q^{n+1} + \exp( \frac{t^{n+1} }{T} ) \nonumber\\
&& \big( (\textbf{u}^n\cdot\nabla  \textbf{u}^n, \textbf{u}^{n+1}) - \alpha  (( \nabla \times \textbf{b}^n)  \times \textbf{b}^n, \textbf{u}^{n+1})
+ \alpha(  \nabla \times  (\textbf{b}^n \times \textbf{u}^n)  , \textbf{b}^{n+1} )\big),\label{e_SAV_scheme_first_q} 
     \end{eqnarray} 
     
 We now describe how to solve the semi-discrete-in-time scheme
 \eqref{e_SAV_scheme_first_u}-\eqref{e_SAV_scheme_first_boundary} efficiently.  We denote $S^{n+1}=
\exp ( \frac{t^{n+1}}{T}) q^{n+1} $ and set
  \begin{numcases}{}
  \textbf{b}^{n+1}= \textbf{b}^{n+1}_1 +S^{n+1}\textbf{b}^{n+1}_2,\label{e_split_b} \\
   \textbf{u}^{n+1}=\textbf{u}_1^{n+1}+S^{n+1}\textbf{u}_2^{n+1},\label{e_split_u} \\ 
   p^{n+1}=p_1^{n+1}+S^{n+1}p_2^{n+1}. \label{e_split_p}
\end{numcases}

% *********************************************************************************************
 Plugging  \eqref{e_split_b}-\eqref{e_split_p} in the scheme  \eqref{e_SAV_scheme_first_u}-\eqref{e_SAV_scheme_first_boundary}, we find  that $\textbf{u}_i^{n+1}, p_i^{n+1} $ $(i=1,2)$ 
 satisfy 
 \begin{numcases}{}
\frac{ \textbf{u}_1^{n+1}-\textbf{u}^{n}}{\Delta t}= \nu\Delta \textbf{u}_1^{n+1}-\nabla p^{n+1}_1 , 
\label{e_implementation_u1p1} \\
\frac{ \textbf{u}_2^{n+1} }{ \Delta t } +  \textbf{u}^{n}\cdot \nabla  \textbf{u}^{n}= \nu\Delta \textbf{u}_2^{n+1} - \nabla p^{n+1}_2 + \alpha ( \nabla \times \textbf{b}^n ) \times \textbf{b}^n, \label{e_implementation_u2p2} \\
 \nabla \cdot  \textbf{u}^{n+1}_i =0,\ \  \textbf{u}^{n+1}_i |_{\partial \Omega} =\textbf{0},\quad i=1,2. 
\label{e_implementation_divu}
\end{numcases}
Next we  determine $\textbf{b}_{i}^{n+1}$ $(i=1,2)$ from 
 \begin{numcases}{}
\frac{ \textbf{b}_1^{n+1}-\textbf{b}^{n}}{\Delta t} + \eta \nabla \times ( \nabla \times \textbf{b}^{n+1}_1 )  = 0 , \label{e_implementation_b1} \\
\frac{ \textbf{b}_2^{n+1} }{ \Delta t } + \eta \nabla \times ( \nabla \times \textbf{b}^{n+1}_2 ) +  \nabla \times ( \textbf{b}^n \times \textbf{u}^n ) = 0,  \label{e_implementation_b2} \\
\nabla\cdot\textbf{b}^{n+1}_i =0, \ \ \textbf{b}^{n+1}_i \cdot \textbf{n} |_{\partial \Omega}=0, \ \ \textbf{n} \times  ( \nabla \times \textbf{b}^{n+1}_i ) |_{\partial \Omega} =0,\quad i=1,2. 
 \label{e_implementation_divb}
\end{numcases}
 Once $\textbf{u}_{i}^{n+1}$, $p_i^{n+1} $, $\textbf{b}_i^{n+1}$ $(i=1,2)$ are known, we can determine explicitly  $S^{n+1}$ from \eqref{e_SAV_scheme_first_q} as follows:
 \begin{equation} \label{e_S_solve}
 \aligned
 \left( \frac{T+\Delta t}{T \Delta t }- \exp( \frac{2 t^{n+1} }{T} ) A_2 \right) \exp( -\frac{t^{n+1} }{T} )S^{n+1} =\exp( \frac{t^{n+1} }{T} )A_1 +\frac{1}{\Delta t} q^n,
 \endaligned
\end{equation}
where 
 \begin{equation*} 
 \aligned
  & A_i=   (\textbf{u}^n\cdot\nabla \textbf{u}^n,\tilde{\textbf{u}}_i^{n+1}) - \alpha \left(  ( \nabla \times \textbf{b}^n ) \times \textbf{b}^n, \textbf{u}_i^{n+1} \right) + \alpha  \left(  \nabla \times ( \textbf{b}^n \times \textbf{u}^n ) , \textbf{b}^{n+1}_i \right), \ i = 1, 2.   
 \endaligned
\end{equation*}
Finally,  we can obtain $\textbf{u}^{n+1}$, $p^{n+1}$ and $\textbf{b}^{n+1}$ from  \eqref{e_split_b}-\eqref{e_split_p}. 

In summary, at each time step, we only need to solve two generalized Stokes equations  in  \eqref{e_implementation_u1p1}-\eqref{e_implementation_divu}, and two elliptic equations \eqref{e_implementation_b1}-\eqref{e_implementation_divb} with constant ciefficients  plus a linear algebraic equation \eqref{e_S_solve} at each time step.  Hence, the scheme is very efficient.

\medskip
% ************************************************************************************************
\textbf{Scheme \uppercase\expandafter{\romannumeral 2} (second-order):}  
 Find  ($ \textbf{u}^{n+1}, p^{n+1}, q^{n+1},\textbf{b}^{n+1} $) by solving
      \begin{eqnarray}
   && \frac{ 3 \textbf{u}^{n+1}-4\textbf{u}^{n}+\textbf{u}^{n-1} }{ 2 \Delta t }  - \nu \Delta \textbf{u}^{n+1} + \nabla p^{n+1} \nonumber\\
   &&=  \exp( \frac{t^{n+1} }{T} ) q^{n+1}\big( \alpha(\nabla \times \bar{ \textbf{b} }^{n+1} ) \times \bar{ \textbf{b} }^{n+1}-  \bar{ \textbf{u} }^{n+1}\cdot \nabla  \bar{ \textbf{u} }^{n+1}\big), \label{e_SAV_scheme_second_u} \\
 &&  \frac{ 3 \textbf{b}^{n+1}-4\textbf{b}^{n}+\textbf{b}^{n-1} }{ 2 \Delta t } + \eta \nabla \times ( \nabla \times \textbf{b}^{n+1} ) +  \exp( \frac{t^{n+1} }{T} ) q^{n+1} \nabla \times ( \bar{ \textbf{b} }^{n+1} \times \bar{ \textbf{u} }^{n+1} ) = 0, \label{e_SAV_scheme_second_b}  \\
&&  \nabla\cdot\textbf{u}^{n+1} =0 , 
\ \ \ 
\nabla\cdot\textbf{b}^{n+1} =0, \label{e_SAV_scheme_second_div} \\
  &&   \textbf{u}^{n+1} |_{\partial \Omega} =\textbf{0}, \ \ \textbf{b}^{n+1} \cdot \textbf{n} |_{\partial \Omega}=0, \ \ \textbf{n} \times  ( \nabla \times \textbf{b}^{n+1} ) |_{\partial \Omega} =0, \label{e_SAV_scheme_second_boundary}\\   
&&  \frac{ 3q^{n+1}- 4q^n + q^{n-1} }{ 2\Delta t } =  -\frac{1}{T}q^{n+1}+  \exp( \frac{t^{n+1} }{T} ) \nonumber \\
&& \left[ \alpha (  (\nabla \times ( \bar{ \textbf{b} }^{n+1} \times \bar{ \textbf{u} }^{n+1} ) , \textbf{b}^{n+1}) - \alpha(   (\nabla \times \bar{ \textbf{b} }^{n+1})  \times \bar{ \textbf{b} }^{n+1}, \textbf{u}^{n+1} ) + ( \bar{ \textbf{u} }^{n+1} \cdot \nabla  \bar{ \textbf{u} }^{n+1},\textbf{u}^{n+1}) \right], \label{e_SAV_scheme_second_q} 
     \end{eqnarray} 
where $\bar{\textbf{v}}^{n+1}=2\textbf{v}^{n}-\textbf{v}^{n-1}$ for any function $\textbf{v}$. For  $n = 0$, we can compute ($\textbf{u}^{1}$, $p^{1}$, $q^{1}$, $\textbf{b}^{1}$) by the first-order scheme described above.

The second-order scheme \eqref{e_SAV_scheme_second_u}-\eqref{e_SAV_scheme_second_q} can be implemented  the same way as the first-order scheme \eqref{e_SAV_scheme_first_u}-\eqref{e_SAV_scheme_first_q}. 

%==================================================================
  \subsection{Energy Stability} 
We  show below that the first- and second-order SAV schemes \eqref{e_SAV_scheme_first_u}-\eqref{e_SAV_scheme_first_q} and \eqref{e_SAV_scheme_second_u}-\eqref{e_SAV_scheme_second_q} are unconditionally energy stable. 
We shall use $\|\cdot\|$  and  $(\cdot, \cdot)$ to denote the norm and inner product in $L^2(\Omega)$,  and $<\cdot, \cdot>$ to denote the inner product in $L^2(\partial\Omega)$. 

 \medskip
 
 \begin{theorem}\label{thm_energy stability_first order}
The scheme \eqref{e_SAV_scheme_first_u}-\eqref{e_SAV_scheme_first_q} is unconditionally stable in the sense that
\begin{equation}\label{discteng}
\aligned
E^{n+1}-E^{n} \leq - \nu\Delta t \| \nabla \textbf{u}^{n+1} \|^2 -  \eta \alpha \Delta t \| \nabla \textbf{b}^{n+1} \|^2 -\frac{1}{T}\Delta t|q^{n+1}|^2, \ \ \forall \Delta t,\; n\geq 0,
\endaligned
\end{equation} 
where 
\begin{equation*}
E^{n+1}=\frac 12 \|\textbf{u}^{n+1}\|^2 + \frac \alpha 2 \|\textbf{b}^{n+1}\|^2 +\frac 12|q^{n+1}|^2 .
\end{equation*} 
\end{theorem}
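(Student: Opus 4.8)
The plan is to derive the discrete energy balance by testing each of the three evolution relations \eqref{e_SAV_scheme_first_u}, \eqref{e_SAV_scheme_first_b} and \eqref{e_SAV_scheme_first_q} with its natural multiplier and summing the results; the whole design of the scheme is arranged so that the explicitly treated nonlinear terms, all carrying the common factor $S^{n+1}:=\exp(t^{n+1}/T)\,q^{n+1}$, cancel identically. Concretely, I would take the $L^2$ inner product of \eqref{e_SAV_scheme_first_u} with $\Delta t\,\textbf{u}^{n+1}$, of \eqref{e_SAV_scheme_first_b} with $\alpha\Delta t\,\textbf{b}^{n+1}$, and multiply the scalar identity \eqref{e_SAV_scheme_first_q} by $\Delta t\,q^{n+1}$. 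The $\alpha$-weight on the magnetic equation is not optional: it is forced by the requirement that the coupling terms line up with the $\alpha$-weighted coupling terms appearing in \eqref{e_SAV_scheme_first_q}.

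For the linear part I would apply the elementary identity $2(g^{n+1}-g^n,g^{n+1})=\|g^{n+1}\|^2-\|g^n\|^2+\|g^{n+1}-g^n\|^2$ to each discrete time derivative, which reproduces $E^{n+1}-E^{n}$ together with the three nonnegative numerical-dissipation remainders $\tfrac12\|\textbf{u}^{n+1}-\textbf{u}^n\|^2$, $\tfrac\alpha2\|\textbf{b}^{n+1}-\textbf{b}^n\|^2$ and $\tfrac12|q^{n+1}-q^n|^2$. Integration by parts converts $-\nu(\Delta\textbf{u}^{n+1},\textbf{u}^{n+1})$ into $\nu\|\nabla\textbf{u}^{n+1}\|^2$ and annihilates the pressure term $(\nabla p^{n+1},\textbf{u}^{n+1})$ since $\nabla\cdot\textbf{u}^{n+1}=0$ and $\textbf{u}^{n+1}|_{\partial\Omega}=\textbf{0}$. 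For the magnetic diffusion I would use $(\nabla\times(\nabla\times\textbf{b}^{n+1}),\textbf{b}^{n+1})=\|\nabla\times\textbf{b}^{n+1}\|^2+\int_{\partial\Omega}\big((\nabla\times\textbf{b}^{n+1})\times\textbf{b}^{n+1}\big)\cdot\textbf{n}\,dS$ and note that the boundary integral vanishes because $\textbf{n}\times(\nabla\times\textbf{b}^{n+1})|_{\partial\Omega}=0$ forces $\nabla\times\textbf{b}^{n+1}$ to be parallel to $\textbf{n}$ there.

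The crux is the nonlinear cancellation. Collecting all coupling contributions, which share the factor $\Delta t\,S^{n+1}$, the Lorentz-force term $\alpha((\nabla\times\textbf{b}^n)\times\textbf{b}^n,\textbf{u}^{n+1})$ from the momentum equation cancels its opposite in \eqref{e_SAV_scheme_first_q}, the convection term $(\textbf{u}^n\cdot\nabla\textbf{u}^n,\textbf{u}^{n+1})$ cancels likewise, and the induction term $\alpha(\nabla\times(\textbf{b}^n\times\textbf{u}^n),\textbf{b}^{n+1})$ from the magnetic equation cancels the matching term in \eqref{e_SAV_scheme_first_q}. The net nonlinear contribution is exactly zero, requiring neither any skew-symmetry of the trilinear forms nor a smallness condition on $\Delta t$. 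Summing the three tested equations then yields
\begin{equation*}
E^{n+1}-E^{n}+\tfrac12\|\textbf{u}^{n+1}-\textbf{u}^n\|^2+\tfrac\alpha2\|\textbf{b}^{n+1}-\textbf{b}^n\|^2+\tfrac12|q^{n+1}-q^n|^2
=-\nu\Delta t\|\nabla\textbf{u}^{n+1}\|^2-\eta\alpha\Delta t\|\nabla\times\textbf{b}^{n+1}\|^2-\tfrac{\Delta t}{T}|q^{n+1}|^2 .
\end{equation*}
Discarding the three nonnegative differences gives \eqref{discteng}, once $\|\nabla\times\textbf{b}^{n+1}\|$ is identified with $\|\nabla\textbf{b}^{n+1}\|$ through the standard identity for divergence-free fields satisfying $\textbf{b}^{n+1}\cdot\textbf{n}|_{\partial\Omega}=0$ (exactly as the discrete law mirrors the continuous law \eqref{engdiss}).

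I expect the only delicate points to be bookkeeping ones: tracking every coupling term with the correct sign and $\alpha$-weight so that the cancellation is exact (this is precisely what the artificial equation \eqref{e_SAV_scheme_first_q} is engineered to guarantee, reproducing with opposite sign the three explicit nonlinear terms), and justifying both the vanishing of the curl boundary integral and the passage from $\|\nabla\times\textbf{b}^{n+1}\|$ to $\|\nabla\textbf{b}^{n+1}\|$ under the stated boundary and divergence constraints. Crucially, no estimate of the nonlinear terms themselves enters, which is exactly what makes the scheme unconditionally stable.
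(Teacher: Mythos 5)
Your proposal is correct and follows essentially the same route as the paper: test \eqref{e_SAV_scheme_first_u}, \eqref{e_SAV_scheme_first_b}, \eqref{e_SAV_scheme_first_q} with $\Delta t\,\textbf{u}^{n+1}$, $\alpha\Delta t\,\textbf{b}^{n+1}$, $\Delta t\,q^{n+1}$, apply the identity $(a-b,a)=\tfrac12(|a|^2-|b|^2+|a-b|^2)$, and let the three $S^{n+1}$-weighted nonlinear terms cancel exactly against \eqref{e_SAV_scheme_first_q}, with no estimate of the nonlinearities needed. The only (cosmetic) difference is the magnetic diffusion term: the paper first rewrites $\nabla\times(\nabla\times\textbf{b}^{n+1})=-\Delta\textbf{b}^{n+1}$ via \eqref{e_curl(curl b)} and the divergence constraint, whereas you integrate by parts to $\|\nabla\times\textbf{b}^{n+1}\|^2$ and then invoke the identification with $\|\nabla\textbf{b}^{n+1}\|^2$ — both variants tacitly discard the same boundary contribution, so your treatment is no less rigorous than the paper's.
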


\begin{proof} 
Taking the inner product of \eqref{e_SAV_scheme_first_u} with   $\Delta t \textbf{u}^{n+1}$ and using the identity 
\begin{equation}\label{e_identity_Euler}
\aligned
(a-b,a)=\frac{1}{2}(|a|^2-|b|^2+|a-b|^2),
\endaligned
\end{equation} 
we have
\begin{equation}\label{e_stability_first_u}
\aligned
&\frac{\| \textbf{u} ^{n+1}\|^2-\| \textbf{u}^{n} \|^2}{2}+\frac{\| \textbf{u}^{n+1}-\textbf{u}^{n} \|^2}{2}
+\nu\Delta t \| \nabla \textbf{u}^{n+1} \|^2 + \Delta t(\nabla p^{n+1}, \textbf{u}^{n+1}) \\
& = \Delta t \exp( \frac{t^{n+1} }{T} ) q^{n+1} \left( \alpha  ( \nabla \times \textbf{b}^n ) \times \textbf{b}^n, \textbf{u}^{n+1} )-\textbf{u}^{n}\cdot \nabla  \textbf{u}^{n},\textbf{u}^{n+1} )\right) .
\endaligned
\end{equation} 
Taking the inner product of \eqref{e_SAV_scheme_first_b} with   $ \alpha \Delta t \textbf{b}^{n+1}$ and using the identity 
\begin{equation} \label{e_curl(curl b)}
\aligned
& \nabla \times ( \nabla \times \textbf{b}^{n+1} ) = -\Delta \textbf{b}^{n+1} + \nabla ( \nabla \cdot \textbf{b}^{n+1} ),
\endaligned
\end{equation}
we have
\begin{equation}\label{e_stability_first_b}
\aligned
& \alpha \frac{\| \textbf{b} ^{n+1}\|^2-\| \textbf{b}^{n} \|^2}{2}+ \alpha \frac{\| \textbf{b}^{n+1}-\textbf{b}^{n} \|^2 }{2} + \eta \alpha \Delta t \| \nabla \textbf{b}^{n+1} \|^2 \\
& \ \ \ \ \ + \alpha \Delta t  \exp( \frac{t^{n+1}}{T} ) q^{n+1} \left(  \nabla \times ( \textbf{b}^n \times \textbf{u}^n ) , \textbf{b}^{n+1} \right) =0. 
\endaligned
\end{equation} 
Multiplying \eqref{e_SAV_scheme_first_q} by $q^{n+1}\Delta t$ leads to
\begin{equation}\label{e_stability_first_q}
\aligned
& \frac{ |q^{n+1}|^2 - |q^n|^2 } { 2 } + \frac{1}{2} | q^{n+1}- q^n |^2 +  \frac{1}{T} \Delta t |q^{n+1}|^2 \\
&=  \Delta t q^{n+1} \exp( \frac{t^{n+1} }{T} )\big(  (\textbf{u}^n\cdot\nabla  \textbf{u}^n, \textbf{u}^{n+1}) 
-  \alpha ( (\nabla \times \textbf{b}^n ) \times \textbf{b}^n, \textbf{u}^{n+1})
+ \alpha (  \nabla \times  (\textbf{b}^n \times \textbf{u}^n) , \textbf{b}^{n+1}) \big) . 
\endaligned
\end{equation}
Then summing up \eqref{e_stability_first_u} with \eqref{e_stability_first_b}-\eqref{e_stability_first_q} results in 
\begin{equation*}\label{e_stability_first_final}
\aligned
& \|\textbf{u}^{n+1}\|^2-\|\textbf{u}^{n}\|^2 + \alpha \|\textbf{b}^{n+1}\|^2- \alpha \|\textbf{b}^{n}\|^2 +|q^{n+1}|^2-|q^n|^2  \\
& +|q^{n+1}-q^n|^2+\| \textbf{u} ^{n+1}-\textbf{u}^{n}\|^2 +
+\| \textbf{b}^{n+1}-\textbf{b}^{n}\|^2 \\
\hskip 1cm & \leq -2 \nu\Delta t \| \nabla \textbf{u}^{n+1} \|^2 - 2 \eta \alpha \Delta t \| \nabla \textbf{b}^{n+1} \|^2-\frac{2}{T}\Delta t|q^{n+1}|^2,
\endaligned
\end{equation*}
which implies the desired result.   
\end{proof}

We observe that the discrete energy dissipation law \eqref{discteng} is an approximation of the continuous energy dissipation law \eqref{engdiss2}.
% *************************************************************************************************
\medskip

 \begin{theorem}\label{thm_energy stability_second order}
The scheme \eqref{e_SAV_scheme_second_u}-\eqref{e_SAV_scheme_second_q} is unconditionally stable in the sense that 
\begin{equation}\label{e_energy decay_second}
\aligned
E^{n+1}-E^{n}\leq -\Delta t (\nu\| \nabla \textbf{u}^{n+1} \|^2 +  \eta \alpha  \| \nabla \textbf{b}^{n+1} \|^2+\frac 1T|q^{n+1}|^2), \ \ \forall \Delta t,\; n\geq 0,
\endaligned
\end{equation} 
where 
\begin{equation}\label{e_definition of E}
\aligned
E^{n+1}= & \frac 14 (\| \textbf{u}^{n+1}\|^2 + \alpha \| \textbf{b}^{n+1}\|^2+|q^{n+1}|^2)\\
&+\frac 14 (\| 2\textbf{u}^{n+1}-\textbf{u}^{n} \|^2 + \alpha \| 2\textbf{b}^{n+1}-\textbf{b}^{n} \|^2  +|2q^{n+1}-q^n|^2) .
\endaligned
\end{equation} 
\end{theorem}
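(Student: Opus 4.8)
The plan is to mirror the proof of Theorem~\ref{thm_energy stability_first order}, replacing the backward-Euler identity \eqref{e_identity_Euler} with its BDF2 analogue. Specifically, I would use the algebraic identity
\begin{equation*}
(3a-4b+c,\,a)=\tfrac12\big(\|a\|^2+\|2a-b\|^2-\|b\|^2-\|2b-c\|^2+\|a-2b+c\|^2\big),
\end{equation*}
verified by expanding each squared norm on the right-hand side (and its scalar version for $q$). Applied with $(a,b,c)=(\textbf{u}^{n+1},\textbf{u}^{n},\textbf{u}^{n-1})$, with $(\textbf{b}^{n+1},\textbf{b}^{n},\textbf{b}^{n-1})$, and with $(q^{n+1},q^{n},q^{n-1})$, this identity reproduces exactly the two groups of quadratic terms in the definition \eqref{e_definition of E} of $E^{n+1}$, together with a nonnegative second-difference remainder $\|a-2b+c\|^2$. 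The generic BDF2 step is $n\geq 1$; the first step $(\textbf{u}^1,p^1,q^1,\textbf{b}^1)$ is computed by Scheme~I and is thus governed by Theorem~\ref{thm_energy stability_first order}.

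First I would take the $L^2$ inner product of the momentum equation \eqref{e_SAV_scheme_second_u} with $2\Delta t\,\textbf{u}^{n+1}$, of the magnetic equation \eqref{e_SAV_scheme_second_b} with $2\alpha\Delta t\,\textbf{b}^{n+1}$, and multiply the scalar equation \eqref{e_SAV_scheme_second_q} by $2\Delta t\,q^{n+1}$. In the momentum equation the pressure term drops out since $(\nabla p^{n+1},\textbf{u}^{n+1})=-(p^{n+1},\nabla\cdot\textbf{u}^{n+1})=0$ by \eqref{e_SAV_scheme_second_div} and the boundary condition $\textbf{u}^{n+1}|_{\partial\Omega}=\textbf{0}$. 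In the magnetic equation I would invoke the curl identity \eqref{e_curl(curl b)} together with $\nabla\cdot\textbf{b}^{n+1}=0$ and the boundary conditions in \eqref{e_SAV_scheme_second_boundary} to rewrite $2\alpha\eta\Delta t(\nabla\times(\nabla\times\textbf{b}^{n+1}),\textbf{b}^{n+1})=2\alpha\eta\Delta t\|\nabla\textbf{b}^{n+1}\|^2$, exactly as in \eqref{e_stability_first_b}. Applying the BDF2 identity to each time-difference term then produces three balances whose left-hand sides assemble $2E^{n+1}-2E^{n}$, the three nonnegative remainders, and the dissipation terms $2\nu\Delta t\|\nabla\textbf{u}^{n+1}\|^2$, $2\alpha\eta\Delta t\|\nabla\textbf{b}^{n+1}\|^2$ and $\tfrac2T\Delta t|q^{n+1}|^2$.

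The key step, and the one I expect to be the main obstacle to get right, is the exact cancellation of the explicit nonlinear contributions when the three balances are summed. Writing $C=2\Delta t\exp(t^{n+1}/T)q^{n+1}$ and abbreviating the three coupling pairings as $N_1=((\nabla\times\bar{\textbf{b}}^{n+1})\times\bar{\textbf{b}}^{n+1},\textbf{u}^{n+1})$, $N_2=(\bar{\textbf{u}}^{n+1}\cdot\nabla\bar{\textbf{u}}^{n+1},\textbf{u}^{n+1})$ and $N_3=(\nabla\times(\bar{\textbf{b}}^{n+1}\times\bar{\textbf{u}}^{n+1}),\textbf{b}^{n+1})$, the nonlinear parts contribute $C(\alpha N_1-N_2)$ from \eqref{e_SAV_scheme_second_u}, $-\alpha C N_3$ from \eqref{e_SAV_scheme_second_b}, and $C(\alpha N_3-\alpha N_1+N_2)$ from \eqref{e_SAV_scheme_second_q}; these sum to zero identically, for every $\Delta t$. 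This is precisely why the artificial SAV equation \eqref{e_SAV_scheme_second_q} is built from these three pairings, and it is what permits the fully explicit, extrapolated treatment of the nonlinearities while retaining unconditional stability. Note that the extrapolations $\bar{\textbf{u}}^{n+1},\bar{\textbf{b}}^{n+1}$ appear identically in all three equations, so the cancellation requires no structural assumption on them.

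Finally, after the nonlinear terms vanish I would collect the remaining identity
\begin{equation*}
2E^{n+1}-2E^{n}+\tfrac12 R^{n+1}+2\Delta t\big(\nu\|\nabla\textbf{u}^{n+1}\|^2+\alpha\eta\|\nabla\textbf{b}^{n+1}\|^2+\tfrac1T|q^{n+1}|^2\big)=0,
\end{equation*}
where $R^{n+1}=\|\textbf{u}^{n+1}-2\textbf{u}^{n}+\textbf{u}^{n-1}\|^2+\alpha\|\textbf{b}^{n+1}-2\textbf{b}^{n}+\textbf{b}^{n-1}\|^2+|q^{n+1}-2q^{n}+q^{n-1}|^2\geq 0$ is the sum of the second-difference remainders. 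Dropping the nonnegative term $\tfrac12 R^{n+1}$ and dividing by $2$ yields \eqref{e_energy decay_second}, completing the proof.
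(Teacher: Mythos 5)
Your proposal is correct and follows essentially the same argument as the paper: testing \eqref{e_SAV_scheme_second_u}, \eqref{e_SAV_scheme_second_b}, \eqref{e_SAV_scheme_second_q} with (multiples of) $\textbf{u}^{n+1}$, $\alpha\textbf{b}^{n+1}$, $q^{n+1}$, applying the BDF2 telescoping identity \eqref{e_identity_BDF}, and exploiting the exact cancellation $C(\alpha N_1-N_2)-\alpha CN_3+C(\alpha N_3-\alpha N_1+N_2)=0$ of the extrapolated nonlinear pairings. The only differences are cosmetic (you use multiplier $2\Delta t$ where the paper uses $4\Delta t$, i.e.\ the identity divided by two), and your remark that the genuine BDF2 step is $n\ge 1$ with the first step covered by Theorem~\ref{thm_energy stability_first order} is a correct reading of the paper's initialization.
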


\begin{proof} 
Taking the inner product of \eqref{e_SAV_scheme_second_u} with   $ 4 \Delta t \textbf{u}^{n+1}$ and using the identity 
\begin{equation}\label{e_identity_BDF}
\aligned
2(3a-4b+c,a)=|a|^2+|2a-b|^2-|b|^2-|2b-c|^2+|a-2b+c|^2,
\endaligned
\end{equation} 
we have
\begin{equation}\label{e_stability_second_u}
\aligned
& \| \textbf{u} ^{n+1} \|^2 +  \| 2 \textbf{u} ^{n+1}- \textbf{u} ^{n} \|^2 - \| \textbf{u} ^{n} \|^2 -  \| 2 \textbf{u} ^{n}- \textbf{u} ^{n-1} \|^2 + \| \textbf{u} ^{n+1}-2\textbf{u} ^n+ \textbf{u} ^{n-1} \|^2 \\
&\ \ \ \ \ \ 
+ 4 \nu\Delta t \| \nabla \textbf{u}^{n+1} \|^2 + 4 \Delta t(\nabla p^{n+1}, \textbf{u}^{n+1}) \\
& = 4 \Delta t\exp( \frac{t^{n+1} }{T} ) q^{n+1} \left(  \alpha  ( (\nabla \times \bar{ \textbf{b} }^{n+1} ) \times \bar{ \textbf{b} }^{n+1}, \textbf{u}^{n+1} ) -( \bar{ \textbf{u} }^{n+1} \cdot \nabla \bar{ \textbf{u} }^{n+1},\textbf{u}^{n+1} )\right) .
\endaligned
\end{equation} 
Taking the inner product of \eqref{e_SAV_scheme_second_b} with   $ 4 \alpha \Delta t \textbf{b}^{n+1}$ leads to
\begin{equation}\label{e_stability_second_b}
\aligned
& \alpha ( \| \textbf{b} ^{n+1} \|^2 +  \| 2 \textbf{b} ^{n+1}- \textbf{b} ^{n} \|^2 - \| \textbf{b} ^{n} \|^2 -  \| 2 \textbf{b} ^{n}- \textbf{b} ^{n-1} \|^2 + \| \textbf{b} ^{n+1}-2\textbf{b} ^n+ \textbf{b} ^{n-1} \|^2 ) \\
& + 4 \eta \alpha \Delta t \| \nabla \textbf{b}^{n+1} \|^2 + 4 \alpha \Delta t  \exp( \frac{t^{n+1}}{T} ) q^{n+1} \left(  \nabla \times ( \bar{ \textbf{b} }^{n+1} \times \bar{ \textbf{u} }^{n+1} ) , \textbf{b}^{n+1} \right) =0. 
\endaligned
\end{equation} 
Multiplying \eqref{e_SAV_scheme_second_q} by $4 \Delta t q^{n+1}$ leads to
\begin{equation}\label{e_stability_second_q}
\aligned
& |q^{n+1}|^2+|2q^{n+1}-q^n|^2-|q^n|^2-|2q^{n}-q^{n-1}|^2+|q^{n+1}-2q^n+q^{n-1}|^2 \\
= & - \frac{4\Delta t}{T} |q^{n+1}|^2 + 4 \Delta t q^{n+1} \exp( \frac{t^{n+1} }{T} )( ( \bar{ \textbf{u} }^{n+1} \cdot \nabla ) \bar{ \textbf{u} }^{n+1}, \textbf{u}^{n+1} ) \\
& - 4 \alpha \Delta t q^{n+1} \exp( \frac{t^{n+1} }{T} ) \left(  ( (\nabla \times \bar{ \textbf{b} }^{n+1})  \times \bar{ \textbf{b} }^{n+1}, \textbf{u}^{n+1} ) -  (\nabla \times (\bar{ \textbf{b} }^{n+1} \times \bar{ \textbf{u} }^{n+1} ) , \textbf{b}^{n+1}) \right) . 
\endaligned
\end{equation}

Then summing up \eqref{e_stability_second_u} with \eqref{e_stability_second_b}-\eqref{e_stability_second_q} results in 
\begin{equation*}\label{e_stability_second_final}
\aligned
&  \| \textbf{u}^{n+1}\|^2+\| 2\textbf{u}^{n+1}-\textbf{u}^{n} \|^2 + \alpha \| \textbf{b}^{n+1}\|^2+ \alpha \| 2\textbf{b}^{n+1}-\textbf{b}^{n} \|^2 \\
&+ |q^{n+1}|^2+|2q^{n+1}-q^n|^2 + \| \textbf{u} ^{n+1}-2\textbf{u} ^n+ \textbf{u} ^{n-1} \|^2
+ \alpha \| \textbf{b} ^{n+1}-2\textbf{b} ^n+ \textbf{b} ^{n-1} \|^2 \\
& + |q^{n+1}-2q^n+q^{n-1}|^2 + \frac{4\Delta t}{T} |q^{n+1}|^2 + 4 \nu\Delta t \| \nabla \textbf{u}^{n+1} \|^2 + 4 \eta \alpha \Delta t \| \nabla \textbf{b}^{n+1} \|^2 \\
\leq &  \| \textbf{u}^{n}\|^2+\| 2\textbf{u}^{n}-\textbf{u}^{n-1} \|^2 + \alpha \| \textbf{b}^{n}\|^2+ \alpha \| 2\textbf{b}^{n}-\textbf{b}^{n-1} \|^2  
+ |q^{n}|^2+|2q^{n}-q^{n-1} |^2,
\endaligned
\end{equation*}
which implies the desired result.  
\end{proof}

Note that  the discrete energy  defined in  \eqref{e_definition of E} is  a second-order approximation of the continuous energy defined in \eqref{engdiss2}, and  \eqref{e_energy decay_second} is  an  approximation of the continuous energy dissipation law \eqref{engdiss2}.
%==================================================================
  \section{Error Analysis} 
In this section, we carry out a rigorous error analysis for Scheme I \eqref{e_SAV_scheme_first_u}-\eqref{e_SAV_scheme_first_q} in the two-dimensional case. Similar analysis can also be carried out for Scheme II but the process  is much more tedious so we opt to only consider  Scheme I here. We emphasize that while both schemes can be used in the three-dimension case, the error analysis can not be easily extended to the three-dimension case due to some technical issues. Hence, we set $d=2$ in this section.

  \subsection{Preliminaries}
We describe below some notations and results which will be frequently used in the analysis.
We use $C$, with or without subscript, to denote a positive
constant, which could have different values at different places.

%Let $\Omega$ be an open bounded domain in $\mathbb{R}^d$,
We  use the standard notations $L^2(\Omega)$, $H^k(\Omega)$ and $H^k_0(\Omega)$ to denote the usual Sobolev spaces. The norm corresponding to $H^k(\Omega)$ will be denoted simply by $\|\cdot\|_k$.  The vector functions and vector spaces will be indicated by boldface type.
  
  We define    
\begin{flalign*} 
    \begin{array}{l}
    \displaystyle  L^2_0( \Omega)  = 
\{ p \in   L^2 ( \Omega )  :  \int_{\Omega} q dx=0 \} , \\
\displaystyle  \textbf{H}^k( \Omega ) = ( H^k( \Omega) )^d,\ \  \textbf{H}^1_0( \Omega )  =
\{ \textbf{v} \in  \textbf{H}^1( \Omega ) :  \textbf{v} |_{\partial \Omega }=0 \}, \\
\displaystyle  \textbf{H}^1_n( \Omega )  =
\{ \textbf{v} \in  \textbf{H}^1( \Omega ) :  \textbf{v} \cdot \textbf{n}| _{\partial \Omega }= 0 \} , \\
\displaystyle  \textbf{V}  =
\{ \textbf{v} \in  \textbf{H}_0^1( \Omega ) : \nabla\cdot \textbf{v} =0 \} , \\
\displaystyle  \textbf{H}  =
\{ \textbf{v} \in  ( L^2 ( \Omega ) )^2 :  \nabla\cdot  \textbf{v} =0, \ \textbf{v} \cdot \textbf{n}| _{\partial \Omega }= 0 \} .
   \end{array}
  \end{flalign*}
The following formulae are essential and useful for our analysis 
\begin{equation}\label{e_cross_product1}
\aligned
( \nabla \times \textbf{v} ) \times \textbf{v} = ( \textbf{v} \cdot  \nabla ) \textbf{v} - \frac{1}{2} \nabla | \textbf{v}|^2,
\endaligned
\end{equation}
\begin{equation}\label{e_cross_product2}
\aligned
 \textbf{v} \times ( \textbf{w} \times \textbf{z} ) = (  \textbf{v} \cdot \textbf{z} ) \textbf{w} - 
(  \textbf{v} \cdot \textbf{w} ) \textbf{z}, 
\endaligned
\end{equation}
\begin{equation}\label{e_cross_product2_plus}
\aligned
 \nabla \times ( \textbf{v} \times \textbf{w} ) = (  \textbf{w} \cdot \nabla ) \textbf{v} - 
(  \textbf{v} \cdot \nabla ) \textbf{w} + ( \nabla \cdot \textbf{w} ) \textbf{v} - ( \nabla \cdot \textbf{v} ) \textbf{w}, 
\endaligned
\end{equation}
\begin{equation}\label{e_cross_product3}
\aligned
( \textbf{v} \times \textbf{w} ) \times \textbf{z} \cdot \textbf{q} = ( \textbf{v} \times \textbf{w} ) \cdot  ( \textbf{z} \times \textbf{q} ) = -  ( \textbf{v} \times \textbf{w} ) \cdot  ( \textbf{q} \times \textbf{z} ) , 
\endaligned
\end{equation}
\begin{equation}\label{e_integration by parts1}
\aligned
\int_{ \Omega } ( \nabla \times \textbf{v} ) \cdot \textbf{w} d \textbf{x} =  \int_{ \Omega } \textbf{v} \cdot ( \nabla \times \textbf{w} ) d \textbf{x} + \int_{ \partial \Omega } ( \textbf{n} \times \textbf{v} ) \cdot \textbf{w} ds.
\endaligned 
\end{equation}
Define the Stokes operator
  $$ A\textbf{u}=-P \Delta\textbf{u},\ \ \forall \ \textbf{u}\in D(A)=\textbf{H}^2(\Omega)\cap\textbf{V},$$
where $P $ is the orthogonal projector in $\textbf{L}^2(\Omega)$ onto $\textbf{H}$, and the Stokes operator $A$ is an unbounded positive self-adjoint closed operator in $\textbf{H}$ with domain $D(A)$. We then derive from the above and  Poincar\'e inequality that \cite{temam2001navier,heywood1982finite}
\begin{equation}\label{e_norm H2}
\aligned
\|\nabla\textbf{v}\|\leq c_1\|A^{\frac{1}{2}}\textbf{v}\|,\ \ \|\Delta\textbf{v}\|\leq c_1\|A\textbf{v}\|, \ \ \forall \ \textbf{v}\in D(A)=\textbf{H}^2(\Omega)\cap\textbf{V},
\endaligned
\end{equation} 
and 
\begin{equation}\label{e_norm H1}
\aligned
\|\textbf{v}\|\leq c_1\|\nabla\textbf{v}\|, \ \forall \ \textbf{v}\in \textbf{H}^1_0(\Omega),\ \ 
\|\nabla\textbf{v}\|\leq c_1\|A\textbf{v}\|, \ \ \forall \ \textbf{v}\in D(A) .
\endaligned
\end{equation}

 We recall the following inequalities  will be used in the sequel \cite{Gerbeau2006Mathematical,Jinjin2019A}:
\begin{equation}\label{e_norm curl}
\aligned
\|\nabla \times \textbf{v}\|_0 \leq c_1 \| \nabla \textbf{v} \|_0 , \ \ \|\nabla \cdot \textbf{v}\|_0 \leq c_1 \| \nabla \textbf{v} \|_0, \ \forall \  \textbf{v} \in \textbf{H}^1 (\Omega),
\endaligned
\end{equation}
\begin{equation}\label{e_norm curl_div}
\aligned
\|\nabla \times \textbf{v}\|_0^2 + \| \nabla \cdot \textbf{v}\|_0 ^2 \geq c_1 \| \textbf{v} \|_1^2,  \ \forall \  \textbf{v} \in \textbf{H}^1_n (\Omega),
\endaligned
\end{equation}
 and the following well-known inequalities which are valid with $d=2$  \cite{liu2010stable}: 
\begin{equation}\label{e_norm L4}
\aligned
\|\textbf{v}\|_{L^4} \leq c_1 \| \textbf{v}\|^{1/2}_0 \| \textbf{v}\|^{1/2}_1, \ \forall \  \textbf{v} \in \textbf{H}^1 (\Omega),
\endaligned
\end{equation}
\begin{equation}\label{e_norm L_infty}
\aligned
\|\textbf{v}\|_{L^{\infty}} \leq c_1 \| \textbf{v}\|^{1/2}_1 \| \textbf{v}\|^{1/2}_2, \ \forall \  \textbf{v} \in \textbf{H}^2 (\Omega),
\endaligned
\end{equation}
where  $c_1$ is a positive constant depending only on $\Omega$.

% *********************************************************************************************
Next we define the trilinear form $b(\cdot,\cdot,\cdot)$ by
\begin{equation*}
\aligned
b(\textbf{u},\textbf{v},\textbf{w})=\int_{\Omega}(\textbf{u}\cdot\nabla)\textbf{v}\cdot \textbf{w}d\textbf{x}.
\endaligned
\end{equation*}
We can easily obtain that the trilinear form $b(\cdot,\cdot,\cdot)$ is a skew-symmetric with respect to its last two arguments, i.e., 
\begin{equation}\label{e_skew-symmetric1}
\aligned
b(\textbf{u},\textbf{v},\textbf{w})=-b(\textbf{u},\textbf{w},\textbf{v}),\ \ \forall \ \textbf{u}\in \textbf{H}, \ \ \textbf{v}, \textbf{w}\in \textbf{H}^1(\Omega),
\endaligned
\end{equation}
and 
\begin{equation}\label{e_skew-symmetric2}
\aligned
b(\textbf{u},\textbf{v},\textbf{v})=0,\ \ \forall \ \textbf{u}\in \textbf{H}, \ \ \textbf{v}\in \textbf{H}^1 (\Omega).
\endaligned
\end{equation}
By using a combination of integration by parts, Holder's inequality, and Sobolev inequalities\cite{Temam1995Navier,Shen1992On,he2013euler}, we have that for $d \leq 4$,
\begin{flalign}\label{e_estimate for trilinear form}
b(\textbf{u},\textbf{v},\textbf{w})\leq \left\{
   \begin{array}{l}
   c_2\|\textbf{u}\|_1\|\textbf{v}\|_1\|\textbf{w}\|_1,\\
   c_2\|\textbf{u}\|_2\|\textbf{v}\|\|\textbf{w}\|_1,\\
   c_2\|\textbf{u}\|_2\|\textbf{v}\|_1\|\textbf{w}\|,\\
   c_2\|\textbf{u}\|_1\|\textbf{v}\|_2\|\textbf{w}\|,\\
   c_2\|\textbf{u}\|\|\textbf{v}\|_2\|\textbf{w}\|_1,
   \end{array}
   \right.
\end{flalign}
and that for $d=2$, we have
\begin{flalign}\label{e_estimate for trilinear form1}
b(\textbf{u},\textbf{v},\textbf{w})\leq \left\{
   \begin{array}{l}
   c_2\|\textbf{u}\|_1^{1/2}\|\textbf{u}\|^{1/2}\|\textbf{v}\|_1^{1/2}\|\textbf{v}\|^{1/2}\|\textbf{w}\|_1, \\
   c_2\|\textbf{u}\|_1^{1/2}\|\textbf{u}\|^{1/2}\|A\textbf{v}\|^{1/2}\|\textbf{v}\|^{1/2}\|\textbf{w}\|, \\
   c_2\|A\textbf{u}\|^{1/2}\|\textbf{u}\|^{1/2}\|\textbf{v}\|_1\|\textbf{w}\|,
      \end{array}
   \right.
\end{flalign}
where $c_2$ is a positive constant depending only on $\Omega$. 

We will frequently use the following discrete version of the Gronwall lemma:

\medskip
\begin{lemma} \label{lem: gronwall2}
Let $a_k$, $b_k$, $c_k$, $d_k$, $\gamma_k$, $\Delta t_k$ be nonnegative real numbers such that
\begin{equation}\label{e_Gronwall3}
\aligned
a_{k+1}-a_k+b_{k+1}\Delta t_{k+1}+c_{k+1}\Delta t_{k+1}-c_k\Delta t_k\leq a_kd_k\Delta t_k+\gamma_{k+1}\Delta t_{k+1}
\endaligned
\end{equation}
for all $0\leq k\leq m$. Then
 \begin{equation}\label{e_Gronwall4}
\aligned
a_{m+1}+\sum_{k=0}^{m+1}b_k\Delta t_k \leq \exp \left(\sum_{k=0}^md_k\Delta t_k \right)\{a_0+(b_0+c_0)\Delta t_0+\sum_{k=1}^{m+1}\gamma_k\Delta t_k \}.
\endaligned
\end{equation}
\end{lemma}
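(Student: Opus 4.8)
The plan is to reduce \eqref{e_Gronwall4} to a standard discrete Gronwall argument in three stages: telescoping the hypothesis, extracting a pointwise bound on $a_k$, and then feeding that bound back into the summed inequality. The one genuine wrinkle is that the left-hand side of \eqref{e_Gronwall4} retains the dissipation sum $\sum_k b_k\Delta t_k$, so a textbook Gronwall bound on $a_{m+1}$ alone does not suffice.

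First I would sum \eqref{e_Gronwall3} over $k=0,1,\dots,j$ for an arbitrary $j\le m$. The differences $a_{k+1}-a_k$ telescope to $a_{j+1}-a_0$, and the differences $c_{k+1}\Delta t_{k+1}-c_k\Delta t_k$ telescope to $c_{j+1}\Delta t_{j+1}-c_0\Delta t_0$. After reindexing the shifted sums $\sum_{k=0}^{j}b_{k+1}\Delta t_{k+1}=\sum_{k=1}^{j+1}b_k\Delta t_k$ and $\sum_{k=0}^{j}\gamma_{k+1}\Delta t_{k+1}=\sum_{k=1}^{j+1}\gamma_k\Delta t_k$, and discarding the nonnegative term $c_{j+1}\Delta t_{j+1}$ from the left, I obtain
\[
a_{j+1}+\sum_{k=0}^{j+1}b_k\Delta t_k \le G_j+\sum_{k=0}^{j}a_k d_k\Delta t_k,\qquad G_j:=a_0+(b_0+c_0)\Delta t_0+\sum_{k=1}^{j+1}\gamma_k\Delta t_k .
\]
Since every $\gamma_k\Delta t_k\ge 0$, we have $G_j\le G_m=:G$, which is exactly the quantity in braces on the right of \eqref{e_Gronwall4}.

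Next, dropping the nonnegative sum $\sum_{k=0}^{j+1}b_k\Delta t_k$ from the left yields $a_{j+1}\le G+\sum_{k=0}^{j}a_k d_k\Delta t_k$ for every $0\le j\le m$, while $a_0\le G$ holds trivially. I would then prove by induction on $j$ the pointwise estimate $a_j\le G\,P_j$, where $P_j:=\exp\!\left(\sum_{i=0}^{j-1}d_i\Delta t_i\right)$. The engine of the induction is the elementary inequality $1+x\le e^x$ applied with $x=d_k\Delta t_k\ge 0$: it gives $P_{k+1}-P_k\ge P_k d_k\Delta t_k$, hence $\sum_{k=0}^{j}P_k d_k\Delta t_k\le P_{j+1}-P_0=P_{j+1}-1$, and the inductive step closes as $a_{j+1}\le G+G(P_{j+1}-1)=G\,P_{j+1}$.

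Finally I would return to the summed inequality at $j=m$, namely $a_{m+1}+\sum_{k=0}^{m+1}b_k\Delta t_k\le G+\sum_{k=0}^{m}a_k d_k\Delta t_k$, and substitute the pointwise bound $a_k\le G\,P_k$ into its right-hand side. The same telescoping estimate $\sum_{k=0}^{m}P_k d_k\Delta t_k\le P_{m+1}-1$ collapses the sum, producing $G+G(P_{m+1}-1)=G\,P_{m+1}=G\exp\!\left(\sum_{k=0}^{m}d_k\Delta t_k\right)$, which is precisely \eqref{e_Gronwall4}. The step I expect to require the most care is the very first telescoping: one must retain the dissipation sum $\sum_k b_k\Delta t_k$ on the left throughout, discarding only the auxiliary nonnegative terms $c_{j+1}\Delta t_{j+1}$ and (temporarily) the $b$-sum, so that the former survives on the left of the final estimate while the $a_k d_k\Delta t_k$ terms on the right are absorbed into the exponential factor via the pointwise bound.
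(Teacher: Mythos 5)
Your proof is correct, and every step checks out: the telescoping of \eqref{e_Gronwall3} over $k=0,\dots,j$ (with the $b_0\Delta t_0$ term added to both sides and $c_{j+1}\Delta t_{j+1}\ge 0$ discarded), the strong induction giving $a_j\le G\,P_j$ via $1+x\le e^x$, and the final resubstitution at $j=m$ that preserves the dissipation sum $\sum_{k=0}^{m+1}b_k\Delta t_k$ on the left. The paper itself states Lemma \ref{lem: gronwall2} without proof, treating it as a standard discrete Gronwall inequality from the literature (e.g.\ \cite{Shen1992On,he2013euler}); your argument is the canonical derivation of exactly that result, so it supplies a complete, self-contained justification of what the paper only cites.
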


Finally, we may drop the dependence on ${\bm x}$ if no confusion can arise. In particular, we  set
   \begin{numcases}{}
\displaystyle e_{\textbf{b}}^{n+1}=\textbf{b}^{n+1}-\textbf{b}(t^{n+1}),\ \ 
\displaystyle e_{\textbf{u}}^{n+1}=\textbf{u}^{n+1}-\textbf{u}(t^{n+1}), \notag\\
\displaystyle e_{p}^{n+1}=p^{n+1}-p(t^{n+1}),\ \ \ 
\displaystyle e_{q}^{n+1}=q^{n+1}-q(t^{n+1}).\notag
\end{numcases}

% *****************************************************************************************
\subsection{Error estimates for the velocity and magnetic field} 
In this subsection, we  derive the following error estimates for the velocity $\textbf{u}$ and magnetic field $\textbf{b}$.

\begin{theorem}\label{thm: error_estimate_ubq}
Assuming $\textbf{u}\in H^2(0,T;\textbf{H}^{-1}(\Omega))\bigcap H^1(0,T;\textbf{H}^2(\Omega))\bigcap L^{\infty}(0,T; \textbf{H}^2(\Omega) )$,   and $\textbf{b}\in H^2(0,T;\textbf{H}^{-1}(\Omega))\bigcap H^1(0,T;\textbf{H}^2(\Omega))\bigcap L^{\infty}(0,T; \textbf{H}^2(\Omega) )$, 
then for the   scheme \eqref{e_SAV_scheme_first_u}-\eqref{e_SAV_scheme_first_q}, we have
\begin{equation*}
\aligned
& \| e_{\textbf{u}}^{m +1}\|^2 +  \| e_{\textbf{b}}^{m+1} \|^2 + |e_q^{m+1}|^2 +
\nu \Delta t \sum\limits_{n=0}^{m} \| \nabla e_{\textbf{u}}^{n+1}\|^2 \\
& + \eta \Delta t \sum\limits_{n=0}^{m} \| \nabla e_{\textbf{b}}^{n+1}\|^2 + \Delta t \sum\limits_{n=0}^{m} |  e_{q}^{n+1} |^2
 + \sum\limits_{n=0}^{m} \| e_{\textbf{u}}^{n+1}-e_{\textbf{u}}^n\|^2 \\
& + \sum\limits_{n=0}^{m} \| e_{\textbf{b}}^{n+1}-e_{\textbf{b}}^n\|^2 + 
\sum\limits_{n=0}^{m} | e_q^{n+1}-e_q^n |^2 \leq   C (\Delta t)^2  ,   \ \ \ \forall \ 0\leq n \leq N-1,
\endaligned
\end{equation*}
where $C$ is a positive constant  independent of $\Delta t$.
\end{theorem}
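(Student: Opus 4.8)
The plan is to compare the scheme \eqref{e_SAV_scheme_first_u}--\eqref{e_SAV_scheme_first_q} with the exact solution sampled at $t^{n+1}$, test the error equations against the errors exactly as in the stability proof of Theorem~\ref{thm_energy stability_first order}, and close the estimate with the discrete Gronwall inequality of Lemma~\ref{lem: gronwall2}. Since $\exp(t/T)q(t)\equiv 1$, the exact fields satisfy \eqref{e_SAV_scheme_first_u}--\eqref{e_SAV_scheme_first_q} with $S^{n+1}$ replaced by $1$ and with backward-Euler consistency remainders $R_{\textbf{u}}^{n+1},R_{\textbf{b}}^{n+1},R_q^{n+1}$ on the right; under the assumed $H^2(0,T;\textbf{H}^{-1})$ regularity these remainders are $O(\Delta t)$ and, after multiplication by $\Delta t$, pairing with the errors (measuring $R_{\textbf{u}}^{n+1},R_{\textbf{b}}^{n+1}$ in $\textbf{H}^{-1}$ against the dissipation) and summation, contribute only $O((\Delta t)^2)$. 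Subtracting yields error equations for $e_{\textbf{u}}^{n+1},e_{\textbf{b}}^{n+1},e_q^{n+1}$ in which the auxiliary variable enters solely through $S^{n+1}-1=\exp(t^{n+1}/T)\,e_q^{n+1}$.

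I would then take the inner product of the velocity error equation with $\Delta t\,e_{\textbf{u}}^{n+1}$, of the magnetic error equation with $\alpha\Delta t\,e_{\textbf{b}}^{n+1}$, multiply the $q$ error equation by $\Delta t\,e_q^{n+1}$, and sum. The identity \eqref{e_identity_Euler} generates the telescoping energy together with the dissipation $\nu\Delta t\|\nabla e_{\textbf{u}}^{n+1}\|^2$, $\eta\alpha\Delta t\|\nabla e_{\textbf{b}}^{n+1}\|^2$ and $\tfrac1T\Delta t|e_q^{n+1}|^2$, while the pressure error drops out because $\nabla\cdot e_{\textbf{u}}^{n+1}=0$. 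The entire difficulty is in the nonlinear terms, which I split into an SAV-coupling part carrying the factor $S^{n+1}-1$ and a genuine discretization part of the form $\mathrm{NL}^{\mathrm{num}}-\mathrm{NL}^{\mathrm{exact}}$.

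The crux is the SAV-coupling part. Writing $\textbf{u}^{n+1}=e_{\textbf{u}}^{n+1}+\textbf{u}(t^{n+1})$ and $\textbf{b}^{n+1}=e_{\textbf{b}}^{n+1}+\textbf{b}(t^{n+1})$, the coupling contributions from the three equations recombine---just as the nonlinear terms cancel in Theorem~\ref{thm_energy stability_first order}---into a single term $\exp(t^{n+1}/T)\,e_q^{n+1}\,G^{n+1}$, where $G^{n+1}$ is the discrete bracket of \eqref{e_SAV_scheme_first_q} but now tested against the \emph{exact} fields $\textbf{u}(t^{n+1}),\textbf{b}(t^{n+1})$. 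Because the continuous bracket $(\textbf{u}\cdot\nabla\textbf{u},\textbf{u})-\alpha((\nabla\times\textbf{b})\times\textbf{b},\textbf{u})+\alpha(\nabla\times(\textbf{b}\times\textbf{u}),\textbf{b})$ vanishes identically, $G^{n+1}$ equals this bracket evaluated at $(\textbf{u}^n,\textbf{b}^n)$ minus its zero value at $(\textbf{u}(t^{n+1}),\textbf{b}(t^{n+1}))$; using \eqref{e_cross_product1} and \eqref{e_cross_product2_plus} to turn the Lorentz and induction terms into trilinear forms and moving derivatives onto the smooth exact fields via \eqref{e_integration by parts1} and the skew-symmetry \eqref{e_skew-symmetric1}, one bounds $|G^{n+1}|$ by $C(\|e_{\textbf{u}}^n\|+\|e_{\textbf{b}}^n\|+\|e_{\textbf{u}}^n\|^{1/2}\|\nabla e_{\textbf{u}}^n\|^{1/2}+\|e_{\textbf{b}}^n\|^{1/2}\|\nabla e_{\textbf{b}}^n\|^{1/2}+\Delta t)$. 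A Young inequality with weight $1/(2T)$ then absorbs the accompanying $|e_q^{n+1}|^2$ into the SAV dissipation $\tfrac1T\Delta t|e_q^{n+1}|^2$, the half-order error-gradient factors into $\tfrac{\nu}{8}\Delta t\|\nabla e_{\textbf{u}}^n\|^2+\tfrac{\eta\alpha}{8}\Delta t\|\nabla e_{\textbf{b}}^n\|^2$, and the remainder into $C\Delta t(\|e_{\textbf{u}}^n\|^2+\|e_{\textbf{b}}^n\|^2)+C(\Delta t)^3$. Crucially the dissipation coefficient $1/T$ is fixed, so this absorption introduces \emph{no} restriction on $\Delta t$; this cancellation-plus-absorption is the mechanism that makes the explicit treatment convergent.

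For the genuine part I would again use \eqref{e_cross_product1} and \eqref{e_cross_product2_plus} (with $\nabla\cdot\textbf{u}^n=\nabla\cdot\textbf{b}^n=0$) to reduce the Lorentz and induction terms to trilinear forms, decompose each difference so as to keep exactly one \emph{numerical} factor, e.g. $b(\textbf{u}^n,\textbf{u}^n,e_{\textbf{u}}^{n+1})-b(\textbf{u}(t^{n+1}),\textbf{u}(t^{n+1}),e_{\textbf{u}}^{n+1})=b(\textbf{u}^n,e_{\textbf{u}}^n,e_{\textbf{u}}^{n+1})+b(e_{\textbf{u}}^n,\textbf{u}(t^{n+1}),e_{\textbf{u}}^{n+1})+(\text{time-lag terms})$, and estimate with the two-dimensional bounds \eqref{e_estimate for trilinear form1}, \eqref{e_estimate for trilinear form} and \eqref{e_norm L4}. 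After Young's inequality these produce dissipation plus terms of the form $C\|\nabla\textbf{u}^n\|^2\|e_{\textbf{u}}^n\|^2$, $C\|\nabla\textbf{b}^n\|^2\|e_{\textbf{b}}^n\|^2$ and their cross analogues, plus $O((\Delta t)^2)$ truncation pieces. Collecting everything in the form \eqref{e_Gronwall3}, the coefficients multiplying the error energies are $C(\|\nabla\textbf{u}^n\|^2+\|\nabla\textbf{b}^n\|^2+1)$, whose $\Delta t$-weighted sums are bounded uniformly by the energy stability of Theorem~\ref{thm_energy stability_first order}; hence $\sum_k d_k\Delta t_k\le C$ and Lemma~\ref{lem: gronwall2} delivers the stated $O((\Delta t)^2)$ bound. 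The main obstacle is exactly the nonlinear coupling between $\textbf{u}$ and $\textbf{b}$: the only unconditional control available on the numerical solution is the energy bound (uniform $L^2$, summable $H^1$), so every decomposition must be arranged to avoid the uncontrolled $\textbf{H}^2$ norm of the numerical fields, placing all excess derivatives either on the smooth exact solution or on error gradients absorbed by the dissipation---which is precisely what the SAV cancellation secures for the $e_q$-terms.
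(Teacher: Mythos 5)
Your proposal is correct and follows essentially the same route as the paper's proof: the paper likewise subtracts the exact equations, tests with $e_{\textbf{u}}^{n+1}$, $\alpha e_{\textbf{b}}^{n+1}$ and $e_q^{n+1}$, relies on the exact cancellation of the $e_q^{n+1}$-weighted coupling terms tested against the errors (the discrete analogue of the stability cancellation), bounds the residual nonlinear differences with the two-dimensional trilinear estimates so that every Gronwall coefficient ($\|\textbf{u}^n\|_1^2$, $\|e_{\textbf{u}}^n\|_1^2$, $\|\textbf{b}(t^{n+1})\|_2^2$, etc.) is $\Delta t$-summable via Lemma~\ref{lem_L2H1_boundedness} and the assumed regularity, and concludes with Lemma~\ref{lem: gronwall2}. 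The only difference is organizational: the paper distributes the work over Lemmas~\ref{lem: error_estimate_u}--\ref{lem: error_estimate_q} and lets the coupling terms cancel when the three estimates are summed with weight $\alpha$ on the magnetic one, rather than isolating your single residual $G^{n+1}$ at the outset.
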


The proof of the above theorem will be carried out with a sequence of lemmas below. 

We  start first with the following  uniform bounds which are direct consequence of  the energy stability in Theorem \ref{thm_energy stability_first order}.
\begin{lemma}\label{lem_L2H1_boundedness}
Let ($\textbf{u}^{n+1}$, $p^{n+1}$, $q^{n+1}$, $\textbf{b}^{n+1}$) be the solution of  \eqref{e_SAV_scheme_first_u}-\eqref{e_SAV_scheme_first_q}, then we have
\begin{equation}\label{e_ubq_boundedness_L2 }
\aligned
\| \textbf {u}^{m+1} \|^2 + \| \textbf {b}^{m+1} \|^2 + | q^{m+1} |^2 \leq k_1, \ \ \forall \ 0\leq m\leq N-1,
\endaligned
\end{equation} 
and 
\begin{equation}\label{e_ub_boundedness_H1 }
\aligned
 \Delta t\sum_{n=0}^{m} \|  \textbf {u}^{n+1} \|_1^2 + \Delta t\sum_{n=0}^{m} \|  \textbf {b}^{n+1} \|_1^2  \leq k_2, \ \ \forall \ 0\leq m\leq N-1,
\endaligned
\end{equation} 
where the constants $k_i$ $(i=1,2)$ are independent of $\Delta t$.
\end{lemma}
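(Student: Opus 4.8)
The goal of Lemma \ref{lem_L2H1_boundedness} is to extract uniform-in-$\Delta t$ bounds for the numerical solution directly from the discrete energy dissipation law already established in Theorem \ref{thm_energy stability_first order}. The plan is to treat the energy inequality \eqref{discteng} as a telescoping relation and sum it over the time index, which immediately controls both the pointwise energy $E^{n+1}$ and the accumulated dissipation terms on the right-hand side.

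First I would recall from Theorem \ref{thm_energy stability_first order} that
\begin{equation*}
E^{n+1}-E^{n} \leq - \nu\Delta t \| \nabla \textbf{u}^{n+1} \|^2 -  \eta \alpha \Delta t \| \nabla \textbf{b}^{n+1} \|^2 -\frac{1}{T}\Delta t|q^{n+1}|^2,
\end{equation*}
where $E^{n+1}=\frac 12 \|\textbf{u}^{n+1}\|^2 + \frac \alpha 2 \|\textbf{b}^{n+1}\|^2 +\frac 12|q^{n+1}|^2$. Since all three terms on the right are nonpositive, we have in particular $E^{n+1}\leq E^{n}$, so $E^{m+1}\leq E^0$ for every $m$. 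Because $E^0$ is determined by the initial data $\textbf{u}^0,\textbf{b}^0,q^0=1$ and is finite and independent of $\Delta t$, the inequality $E^{m+1}\leq E^0$ gives $\frac 12\|\textbf{u}^{m+1}\|^2+\frac{\alpha}{2}\|\textbf{b}^{m+1}\|^2+\frac 12|q^{m+1}|^2\leq E^0$. Up to the factor $\min(1,\alpha)$ absorbed into the constant, this yields \eqref{e_ubq_boundedness_L2 } with $k_1$ depending only on the initial data and $\alpha$.

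For the second bound \eqref{e_ub_boundedness_H1 }, I would sum the energy inequality over $n=0,\dots,m$, so that the left side telescopes to $E^{m+1}-E^0$ and the right side becomes a sum of the dissipation contributions:
\begin{equation*}
E^{m+1}+\nu\Delta t\sum_{n=0}^{m}\|\nabla\textbf{u}^{n+1}\|^2+\eta\alpha\Delta t\sum_{n=0}^{m}\|\nabla\textbf{b}^{n+1}\|^2+\frac{\Delta t}{T}\sum_{n=0}^{m}|q^{n+1}|^2\leq E^0.
\end{equation*}
Dropping the nonnegative $E^{m+1}$ bounds the three accumulated sums by $E^0$. To convert the control of $\|\nabla\textbf{u}^{n+1}\|^2$ and $\|\nabla\textbf{b}^{n+1}\|^2$ into control of the full $\textbf{H}^1$ norms, I would invoke the Poincar\'e-type inequalities from the preliminaries: \eqref{e_norm H1} gives $\|\textbf{u}^{n+1}\|\leq c_1\|\nabla\textbf{u}^{n+1}\|$ since $\textbf{u}^{n+1}\in\textbf{H}^1_0$, and for the magnetic field \eqref{e_norm curl_div} together with the divergence-free constraint $\nabla\cdot\textbf{b}^{n+1}=0$ and the boundary condition $\textbf{b}^{n+1}\cdot\textbf{n}|_{\partial\Omega}=0$ gives $c_1\|\textbf{b}^{n+1}\|_1^2\leq\|\nabla\times\textbf{b}^{n+1}\|^2\leq c_1^2\|\nabla\textbf{b}^{n+1}\|^2$ via \eqref{e_norm curl}. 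Combining these with the summed dissipation bound produces \eqref{e_ub_boundedness_H1 } with $k_2$ depending on $\nu$, $\eta$, $\alpha$, $c_1$ and $E^0$ but not on $\Delta t$.

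There is no genuine obstacle here: the lemma is essentially a bookkeeping consequence of the already-proven stability estimate, and the only point requiring mild care is the passage from the gradient norms appearing in the dissipation to the full $\textbf{H}^1$ norms for $\textbf{b}$, where one must correctly use the divergence-free and tangential boundary conditions to apply \eqref{e_norm curl_div}. Everything else is telescoping and an application of Poincar\'e's inequality.
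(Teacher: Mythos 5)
Your proof is correct and follows exactly the route the paper intends: the paper offers no separate proof of this lemma, stating only that it is a ``direct consequence of the energy stability in Theorem \ref{thm_energy stability_first order},'' which is precisely the telescoping-plus-Poincar\'e argument you spell out. The only place requiring care --- recovering the full $\textbf{H}^1$ norm of $\textbf{b}^{n+1}$ from the dissipation term --- is handled correctly via \eqref{e_norm curl}--\eqref{e_norm curl_div} with the discrete divergence-free and boundary conditions (one could equally just combine the summed gradient bound with the uniform $L^2$ bound and $(m+1)\Delta t\leq T$).
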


Next, we derive a first bound for  the velocity  errors. 
\begin{lemma}\label{lem: error_estimate_u} 
Under the assumptions of Theorem \ref{thm: error_estimate_ubq}, we have
\begin{equation}\label{lem3.4}
\aligned
\frac{\| e_{\textbf{u}}^{n+1}\|^2-\|e_{\textbf{u}}^n\|^2}{2\Delta t}&+\frac{\| e_{\textbf{u}}^{n+1}-e_{\textbf{u}}^n\|^2}{2\Delta t}+ \frac{ \nu } {2} \| \nabla e_{\textbf{u}}^{n+1}\|^2  \\
\leq &  \exp( \frac{t^{n+1}}{T} ) e_q^{n+1}  \left(\alpha( (\nabla \times \textbf{b}^{n} ) \times \textbf{b}^{n}, e_{\textbf{u}}^{n+1})  - (\textbf{u}^n \cdot \nabla\textbf{u}^n, e_{\textbf{u}}^{n+1})\right) \\
&  + C ( \| \textbf{u}(t^n)\|_2^2 + \| \textbf{u}( t^{n+1} )\|_2^2 +\|e_{\textbf{u}}^n\|^2_1)  \|e_{\textbf{u}}^n\|^2   + C( \| e_{ \textbf{b} }^{n} \|_1^2 + \| \textbf{b}( t^{n+1} )  \|_2^2 ) \| e_{ \textbf{b} }^{n} \|^2   \\
 &+  C \Delta t \int_{t^n}^{t^{n+1}} (  \|\textbf{u}_t \|_2^2 + \|\textbf{u}_{tt}\|_{-1}^2 +  \|\textbf{b}_t \|^2_2 ) dt,   \ \ \ \forall \ 0\leq n \leq N-1,
\endaligned
\end{equation}
where $C$ is a positive constant  independent of $\Delta t$.
\end{lemma}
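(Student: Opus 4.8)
The plan is to derive a velocity error equation and test it against $e_{\textbf{u}}^{n+1}$, controlling the nonlinear terms through the regularity of the exact solution and the uniform bounds of Lemma \ref{lem_L2H1_boundedness}. First I would evaluate the momentum equation \eqref{e_MHD_modelA} at $t=t^{n+1}$, using that the exact scalar variable satisfies $\exp(t^{n+1}/T)q(t^{n+1})=1$, so that its right-hand side is exactly $\alpha(\nabla\times\textbf{b}(t^{n+1}))\times\textbf{b}(t^{n+1})-\textbf{u}(t^{n+1})\cdot\nabla\textbf{u}(t^{n+1})$. Subtracting this from the scheme \eqref{e_SAV_scheme_first_u} and writing $S^{n+1}:=\exp(t^{n+1}/T)q^{n+1}=1+\exp(t^{n+1}/T)e_q^{n+1}$ gives an equation for $d_te_{\textbf{u}}^{n+1}-\nu\Delta e_{\textbf{u}}^{n+1}+\nabla e_p^{n+1}$ whose right-hand side splits into (i) the scalar-variable term $\exp(t^{n+1}/T)e_q^{n+1}(\alpha(\nabla\times\textbf{b}^n)\times\textbf{b}^n-\textbf{u}^n\cdot\nabla\textbf{u}^n)$, (ii) the convective difference $-(\textbf{u}^n\cdot\nabla\textbf{u}^n-\textbf{u}(t^{n+1})\cdot\nabla\textbf{u}(t^{n+1}))$, (iii) the Lorentz-force difference $\alpha((\nabla\times\textbf{b}^n)\times\textbf{b}^n-(\nabla\times\textbf{b}(t^{n+1}))\times\textbf{b}(t^{n+1}))$, and (iv) the time-truncation residual $R^{n+1}:=\textbf{u}_t(t^{n+1})-d_t\textbf{u}(t^{n+1})$.

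I would then take the $L^2$ inner product with $e_{\textbf{u}}^{n+1}$ and apply the identity \eqref{e_identity_Euler} to $d_te_{\textbf{u}}^{n+1}$, producing the first two terms on the left of \eqref{lem3.4}; the diffusion term gives $\nu\|\nabla e_{\textbf{u}}^{n+1}\|^2$, of which $\frac{\nu}{2}$ is kept and $\frac{\nu}{2}$ is reserved to absorb the nonlinear contributions. The pressure term vanishes because $e_{\textbf{u}}^{n+1}$ is divergence-free and zero on $\partial\Omega$. Term (i) is kept verbatim, yielding the $e_q^{n+1}$ line of the statement. The residual (iv), paired with $e_{\textbf{u}}^{n+1}\in\textbf{H}^1_0$ through the $\textbf{H}^{-1}$/$\textbf{H}^1$ duality and Taylor's theorem, gives $\epsilon\|\nabla e_{\textbf{u}}^{n+1}\|^2+C\Delta t\int_{t^n}^{t^{n+1}}\|\textbf{u}_{tt}\|_{-1}^2\,dt$.

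For the convective difference (ii) I would insert $\pm\,\textbf{u}(t^n)\cdot\nabla\textbf{u}(t^n)$. The spatial part expands, by bilinearity of $b$, into $b(e_{\textbf{u}}^n,\textbf{u}(t^n),e_{\textbf{u}}^{n+1})+b(\textbf{u}(t^n),e_{\textbf{u}}^n,e_{\textbf{u}}^{n+1})+b(e_{\textbf{u}}^n,e_{\textbf{u}}^n,e_{\textbf{u}}^{n+1})$; the skew-symmetry \eqref{e_skew-symmetric1} moves the derivative off $e_{\textbf{u}}^n$, and bounding $\textbf{u}(t^n)$ in $L^\infty$ via the two-dimensional embedding $\textbf{H}^2\hookrightarrow L^\infty$ \eqref{e_norm L_infty} gives $C\|\textbf{u}(t^n)\|_2^2\|e_{\textbf{u}}^n\|^2$ for the two bilinear terms, while the interpolation \eqref{e_norm L4} turns the cubic term into $C\|e_{\textbf{u}}^n\|_1^2\|e_{\textbf{u}}^n\|^2$, each producing an extra $\epsilon\|\nabla e_{\textbf{u}}^{n+1}\|^2$ to be absorbed. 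The temporal part $b(\textbf{u}(t^n),\textbf{u}(t^n),e_{\textbf{u}}^{n+1})-b(\textbf{u}(t^{n+1}),\textbf{u}(t^{n+1}),e_{\textbf{u}}^{n+1})$ is written through $\textbf{u}(t^n)-\textbf{u}(t^{n+1})=-\int_{t^n}^{t^{n+1}}\textbf{u}_t\,dt$ and Cauchy--Schwarz in time, giving the $\Delta t\int\|\textbf{u}_t\|_2^2$ contribution (and explaining why both $\|\textbf{u}(t^n)\|_2^2$ and $\|\textbf{u}(t^{n+1})\|_2^2$ occur as coefficients).

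The main obstacle is the Lorentz-force difference (iii), since this lemma carries only the velocity dissipation $\nu\|\nabla e_{\textbf{u}}^{n+1}\|^2$ and no magnetic dissipation, so a naked factor $\nabla\times e_{\textbf{b}}^n$ cannot be absorbed here. Inserting $\pm(\nabla\times\textbf{b}(t^n))\times\textbf{b}(t^n)$, the spatial part expands into $(\nabla\times\textbf{b}(t^n))\times e_{\textbf{b}}^n$, $(\nabla\times e_{\textbf{b}}^n)\times\textbf{b}(t^n)$ and the cubic $(\nabla\times e_{\textbf{b}}^n)\times e_{\textbf{b}}^n$. The crucial manoeuvre is to transfer the curl off $e_{\textbf{b}}^n$: using the scalar triple product \eqref{e_cross_product3} to rewrite $((\nabla\times e_{\textbf{b}}^n)\times\textbf{z},e_{\textbf{u}}^{n+1})=(\nabla\times e_{\textbf{b}}^n,\textbf{z}\times e_{\textbf{u}}^{n+1})$, then integrating by parts via \eqref{e_integration by parts1} (the boundary term vanishing because $e_{\textbf{u}}^{n+1}|_{\partial\Omega}=\textbf{0}$), and finally applying \eqref{e_cross_product2_plus} together with $\nabla\cdot\textbf{b}=\nabla\cdot e_{\textbf{u}}^{n+1}=0$ to land everything on $e_{\textbf{b}}^n$ in $L^2$ against first derivatives of $\textbf{b}(t^n)$ and $e_{\textbf{u}}^{n+1}$. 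For the cubic piece this exposes the key cancellation $b(e_{\textbf{u}}^{n+1},e_{\textbf{b}}^n,e_{\textbf{b}}^n)=0$ from \eqref{e_skew-symmetric2}, leaving only $-b(e_{\textbf{b}}^n,e_{\textbf{u}}^{n+1},e_{\textbf{b}}^n)$, which via \eqref{e_norm L4} is controlled by $\epsilon\|\nabla e_{\textbf{u}}^{n+1}\|^2+C\|e_{\textbf{b}}^n\|_1^2\|e_{\textbf{b}}^n\|^2$; the mixed terms yield $\epsilon\|\nabla e_{\textbf{u}}^{n+1}\|^2+C\|\textbf{b}(t^{n+1})\|_2^2\|e_{\textbf{b}}^n\|^2$, where the uniform bounds of Lemma \ref{lem_L2H1_boundedness} on $\|e_{\textbf{u}}^{n+1}\|$ and $\|e_{\textbf{b}}^n\|$ absorb the leftover low-order factors and $\|e_{\textbf{b}}^n\|_1$ is deliberately retained as a coefficient, to be tamed in the final Gronwall argument by the summed bound $\Delta t\sum\|e_{\textbf{b}}^n\|_1^2$. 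The temporal part of (iii) is handled exactly like that of (ii), giving the $\Delta t\int\|\textbf{b}_t\|_2^2$ term. Collecting everything and choosing $\epsilon$ so the accumulated $\epsilon\|\nabla e_{\textbf{u}}^{n+1}\|^2$ stays below the reserved $\frac{\nu}{2}\|\nabla e_{\textbf{u}}^{n+1}\|^2$ yields \eqref{lem3.4}.
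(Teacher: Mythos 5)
Your proposal is correct and follows essentially the same route as the paper: derive the error equation, test with $e_{\textbf{u}}^{n+1}$, keep the $e_q^{n+1}$ terms verbatim for later cancellation, split the convective and Lorentz differences by adding and subtracting exact-solution terms, and crucially move the curl off $e_{\textbf{b}}^n$ by the triple-product identity plus integration by parts since no magnetic dissipation is available in this lemma. The only cosmetic difference is in the cubic Lorentz term, where the paper invokes the identity $(\nabla\times\textbf{v})\times\textbf{v}=(\textbf{v}\cdot\nabla)\textbf{v}-\tfrac12\nabla|\textbf{v}|^2$ directly while you reach the same bound $C\|e_{\textbf{b}}^n\|_1^2\|e_{\textbf{b}}^n\|^2$ via the triple product, integration by parts and the skew-symmetry cancellation.
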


\begin{proof}
 Let $\textbf{R}_{\textbf{u}}^{n+1}$ be the truncation error defined by 
\begin{equation}\label{e_error_Ru}
\aligned
\textbf{R}_{\textbf{u}}^{n+1}=\frac{\partial \textbf{u}(t^{n+1})}{\partial t}- \frac{\textbf{u}(t^{n+1})-\textbf{u}(t^{n})}{\Delta t}=\frac{1}{\Delta t}\int_{t^n}^{t^{n+1}}(t^n-t)\frac{\partial^2 \textbf{u}}{\partial t^2}dt.
\endaligned
\end{equation}
Subtracting \eqref{e_MHD_model_SAV1} at $t^{n+1}$ from \eqref{e_SAV_scheme_first_u}, we obtain
\begin{equation}\label{e_error_u}
\aligned
 d_t e_{\textbf{u} }^{n+1} &- \nu \Delta e_{\textbf{u} }^{n+1} + \nabla e_p^{n+1} 
= \textbf{R}_{\textbf{u}}^{n+1} \\
&+ \exp( \frac{t^{n+1}}{T} ) q(t^{n+1}) (\textbf{u}(t^{n+1})\cdot \nabla\textbf{u}(t^{n+1}) 
- \textbf{u}^{n}\cdot \nabla  \textbf{u}^{n})\\
&+ \alpha \exp( \frac{t^{n+1}}{T} ) q^{n+1} ( (\nabla \times \textbf{b}^{n})  \times \textbf{b}^{n} - (\nabla \times \textbf{b}(t^{n+1}))  \times \textbf{b}(t^{n+1} )).
\endaligned
\end{equation}
Taking the inner product of \eqref{e_error_u} with $e_{\textbf{u}}^{n+1}$, we obtain
\begin{equation}\label{e_error_u_inner_product}
\aligned
&\frac{\| e_{\textbf{u}}^{n+1}\|^2-\|e_{\textbf{u}}^n\|^2}{2\Delta t}+\frac{\| e_{\textbf{u}}^{n+1}-e_{\textbf{u}}^n\|^2}{2\Delta t}+\nu \| \nabla e_{\textbf{u}}^{n+1}\|^2 + ( \nabla e_p^{n+1}, e_{\textbf{u}}^{n+1} ) =(\textbf{R}_{\textbf{u}}^{n+1}, e_{\textbf{u}}^{n+1})\\
& + \exp( \frac{t^{n+1}}{T} ) \left( q(t^{n+1}) \textbf{u}(t^{n+1})\cdot \nabla\textbf{u}(t^{n+1})-q^{n+1}  \textbf{u}^{n}\cdot \nabla  \textbf{u}^{n}, e_{\textbf{u}}^{n+1} \right)   \\
& + \alpha \exp( \frac{t^{n+1}}{T} ) \left( q^{n+1}  ( \nabla \times \textbf{b}^{n} ) \times \textbf{b}^{n} - q(t^{n+1}) ( \nabla \times \textbf{b}(t^{n+1}) ) \times \textbf{b}(t^{n+1} )
, e_{\textbf{u}}^{n+1} \right) .
\endaligned
\end{equation}
For the first term on the right hand side of \eqref{e_error_u_inner_product}, we have 
\begin{equation}\label{e_error_inner_Ru}
\aligned
&(\textbf{R}_{\textbf{u}}^{n+1}, e_{\textbf{u}}^{n+1})\leq \frac{\nu}{16} \|\nabla e_{\textbf{u}}^{n+1} \|^2+C \Delta t\int_{t^n}^{t^{n+1}}\|\textbf{u}_{tt}\|_{-1}^2dt.
\endaligned
\end{equation}
For the second term on the right hand side of \eqref{e_error_u_inner_product}, we have
\begin{equation}\label{e_error_u_nonlinear_convective}
\aligned
 \exp( \frac{t^{n+1}}{T} ) &\left( q(t^{n+1}) \textbf{u}(t^{n+1})\cdot \nabla\textbf{u}(t^{n+1})-q^{n+1} \textbf{u}^{n}\cdot \nabla\textbf{u}^{n}, e_{\textbf{u}}^{n+1} \right)  \\
=&  \left(  (  \textbf{u}(t^{n+1})-\textbf{u}^{n} )\cdot \nabla  \textbf{u}(t^{n+1}), e_{\textbf{u}}^{n+1}\right) +  \left(  \textbf{u}^{n}\cdot \nabla  (\textbf{u}(t^{n+1})-\textbf{u}^{n} ), e_{\textbf{u}}^{n+1}\right)\\
&-  \exp( \frac{t^{n+1}}{T} )  e_q^{n+1} \left(\textbf{u}^n \cdot \nabla\textbf{u}^n, e_{\textbf{u}}^{n+1}\right).
\endaligned
\end{equation}
Using Cauchy-Schwarz inequality and recalling  Lemma \ref{lem_L2H1_boundedness} and  \eqref{e_estimate for trilinear form},  the first term on the right hand side of \eqref{e_error_u_nonlinear_convective} can be bounded by
\begin{equation}\label{e_error_u_nonlinear_convective1}
\aligned
&  \left(   ( \textbf{u}(t^{n+1})-\textbf{u}^{n} )\cdot \nabla  \textbf{u}(t^{n+1}), e_{\textbf{u}}^{n+1}\right)\\
& \ \ \ \ \
\leq c_2(1+c_1) \| \textbf{u}(t^{n+1})-\textbf{u}^{n} \| \| \textbf{u}(t^{n+1})\|_2
\|\nabla e_{\textbf{u}}^{n+1} \| \\
& \ \ \ \ \
\leq \frac{\nu}{16} \|\nabla e_{\textbf{u}}^{n+1} \|^2+C \| \textbf{u}(t^{n+1})\|_2^2\|e_{\textbf{u}}^n\|^2+C \| \textbf{u}(t^{n+1})\|_2^2\Delta t\int_{t^n}^{t^{n+1}}\|\textbf{u}_t\|^2dt.
\endaligned
\end{equation}
The second term on the right hand side of \eqref{e_error_u_nonlinear_convective} can be estimated as follows by using the similar procedure in \cite{li2020new},
\begin{equation}\label{e_error_u_nonlinear_convective2}
\aligned
(   \textbf{u}^{n}\cdot \nabla & (\textbf{u}(t^{n+1})-\textbf{u}^{n} ), e_{\textbf{u}}^{n+1})\\
= & \left(  \textbf{u}^{n}\cdot \nabla  (\textbf{u}(t^{n+1})-\textbf{u}(t^{n}) ), e_{\textbf{u}}^{n+1}\right)-  \left(  e_{\textbf{u}}^{n}\cdot \nabla  e_{\textbf{u}}^n, e_{\textbf{u}}^{n+1}\right)-  \left(  \textbf{u}(t^n)\cdot \nabla  e_{\textbf{u}}^n, e_{\textbf{u}}^{n+1}\right)\\
\leq &c_2(1+c_1)  \|\nabla e_{\textbf{u}}^{n+1} \| (\|\textbf{u}^{n}\|  \| \int_{t^n}^{t^{n+1}}\textbf{u}_tdt \|_2+\|e_{\textbf{u}}^n\| \| \textbf{u}(t^n)\|_2)\\
&+c_2(1+c_1)  \|e_{\textbf{u}}^n\|^{1/2} \|e_{\textbf{u}}^n\|^{1/2}_1\|e_{\textbf{u}}^n\|^{1/2} \|e_{\textbf{u}}^n\|^{1/2}_1\| \nabla e_{\textbf{u}}^{n+1} \| \\
\leq & \frac{\nu}{16} \|\nabla e_{\textbf{u}}^{n+1} \|^2+C (\| \textbf{u}(t^n)\|_2^2+\|e_{\textbf{u}}^n\|^2_1)  \|e_{\textbf{u}}^n\|^2+C \Delta t \int_{t^n}^{t^{n+1}}\|\textbf{u}_t \|_2^2dt.
\endaligned
\end{equation}
For the last term on the right hand side of \eqref{e_error_u_inner_product}, we have
\begin{equation}\label{e_error_u_Lorentz_force}
\aligned
 \exp( \frac{t^{n+1}}{T} )& \left( q^{n+1}  ( \nabla \times \textbf{b}^{n} ) \times \textbf{b}^{n} - q(t^{n+1}) ( \nabla \times \textbf{b}(t^{n+1}) ) \times \textbf{b}(t^{n+1} )
, e_{\textbf{u}}^{n+1} \right) \\
=& \exp( \frac{t^{n+1}}{T} ) e_q^{n+1}  \left( ( \nabla \times \textbf{b}^{n} ) \times \textbf{b}^{n}, e_{\textbf{u}}^{n+1} \right) +  \left(  (\nabla \times (\textbf{b}^{n}- \textbf{b}(t^{n+1}) )) \times \textbf{b}^{n}, e_{\textbf{u}}^{n+1} \right) \\
& +  \left(  ( \nabla \times \textbf{b} (t^{n+1}) ) \times ( \textbf{b}^{n} - \textbf{b}(t^{n+1}) ), e_{\textbf{u}}^{n+1} \right) .
\endaligned
\end{equation}
The second term on the right hand side of \eqref{e_error_u_Lorentz_force} can be transformed into 
\begin{equation}\label{e_error_u_Lorentz_force1}
\aligned
 (  (\nabla \times (\textbf{b}^{n}- \textbf{b}(t^{n+1}) )) &\times \textbf{b}^{n}, e_{\textbf{u}}^{n+1} ) \\
%= &  \left(  \nabla \times e_{ \textbf{b} }^{n} \times \textbf{b}^{n}, e_{\textbf{u}}^{n+1} \right) \\
%&+ \exp( \frac{t^{n+1}}{T} ) q(t^{n+1}) \left(  \nabla \times ( \textbf{b}(t^n)- \textbf{b}(t^{n+1}) ) \times \textbf{b}^{n}, e_{\textbf{u}}^{n+1} \right) \\
=& \left( ( \nabla \times e_{ \textbf{b} }^{n}) \times e_ { \textbf{b} }^{n}, e_{\textbf{u}}^{n+1} \right)  +  \left( ( \nabla \times e_{ \textbf{b} }^{n} ) \times   \textbf{b} (t^n), e_{\textbf{u}}^{n+1} \right) \\
& + \left(  (\nabla \times ( \textbf{b}(t^n)- \textbf{b}(t^{n+1}) ) ) \times \textbf{b}^{n}, e_{\textbf{u}}^{n+1} \right).
\endaligned
\end{equation}
Using the identity \eqref{e_cross_product1}, the first term on the right hand side of \eqref{e_error_u_Lorentz_force1} can be bounded by
\begin{equation}\label{e_error_u_Lorentz_force2}
\aligned
& \left(  (\nabla \times e_{ \textbf{b} }^{n}) \times e_ { \textbf{b} }^{n}, e_{\textbf{u}}^{n+1} \right) 
=    \left( ( e_{ \textbf{b} }^{n} \cdot \nabla ) e_{ \textbf{b} }^{n}, e_{\textbf{u}}^{n+1} \right) - \frac{1}{2} \left(  \nabla |e_{ \textbf{b} }^{n}|^2 , e_{\textbf{u}}^{n+1} \right) \\
& \ \ \ \ \ 
\leq  C \| e_{ \textbf{b} }^{n} \|^{1/2}  \| e_{ \textbf{b} }^{n} \|_1^{1/2}  \| e_{ \textbf{b} }^{n} \|^{1/2}  \| e_{ \textbf{b} }^{n} \|_1^{1/2} \| \nabla e_{\textbf{u}}^{n+1} \| \\
& \ \ \ \ \ 
\leq   \frac{ \nu }{16} \| \nabla e_{\textbf{u}}^{n+1} \|^2 + C \| e_{ \textbf{b} }^{n} \|_1^2 \| e_{ \textbf{b} }^{n} \|^2  .
\endaligned
\end{equation} 
Using \eqref{e_cross_product2}, \eqref{e_cross_product3} and integration by parts \eqref{e_integration by parts1}, the second term on the right hand side of \eqref{e_error_u_Lorentz_force1} can be controlled by
\begin{equation}\label{e_error_u_Lorentz_force3}
\aligned
 \left(  (\nabla \times e_{ \textbf{b} }^{n} )\times   \textbf{b} (t^n), e_{\textbf{u}}^{n+1} \right) =&  -   \left(  e_{\textbf{u}}^{n+1} \times  \textbf{b} (t^n), \nabla \times e_{ \textbf{b} }^{n} \right)  \\
=& -  \left(  \nabla \times (e_{\textbf{u}}^{n+1} \times  \textbf{b} (t^n) ), e_{ \textbf{b} }^{n} \right) - < \textbf{n} \times (e_{\textbf{u}}^{n+1} \times  \textbf{b} (t^n) ), e_{ \textbf{b} }^{n}
> \\ 
=& \left(  ( e_{\textbf{u}}^{n+1} \cdot \nabla  ) \textbf{b} (t^n) , e_{ \textbf{b} }^{n} \right)
 -  \left(  (  \textbf{b} (t^n) \cdot   \nabla ) e_{\textbf{u}}^{n+1}, e_{ \textbf{b} }^{n} \right) \\
 \leq &  \frac{ \nu }{16} \| \nabla e_{\textbf{u}}^{n+1} \|^2 + C  \| \textbf{b}(t^n) \|_2^2  \| e_{ \textbf{b} }^{n} \| ^2, 
\endaligned
\end{equation}
where we use the identity 
\begin{equation*}
\aligned
 \nabla \times ( \textbf{v} \times \textbf{w} ) = (  \textbf{w} \cdot \nabla ) \textbf{v} - 
(  \textbf{v} \cdot \nabla ) \textbf{w}, \  \forall \ \textbf{v}, \textbf{w} \in \textbf{H}.
\endaligned
\end{equation*}
Lemma \ref{lem_L2H1_boundedness} and  \eqref{e_estimate for trilinear form}, the last term on the right hand side of \eqref{e_error_u_Lorentz_force1} can be estimated by
\begin{equation}\label{e_error_u_Lorentz_force4}
\aligned
&  \left(  (\nabla \times ( \textbf{b}(t^n)- \textbf{b}(t^{n+1}) )) \times \textbf{b}^{n}, e_{\textbf{u}}^{n+1} \right) \\
& \ \ \ \ \ \ 
\leq  \frac{ \nu }{16} \| \nabla e_{\textbf{u}}^{n+1} \|^2  + C \| \textbf{b}^{n} \|^2 \Delta t \int_{t^n}^{t^{n+1}} \|\textbf{b}_t \|^2_2 dt .
\endaligned
\end{equation}
For the last term on the right hand side of \eqref{e_error_u_Lorentz_force}, we have
\begin{equation}\label{e_error_u_Lorentz_force5}
\aligned
& \left(  ( \nabla \times \textbf{b} (t^{n+1}) ) \times ( \textbf{b}^{n} - \textbf{b}(t^{n+1}) ), e_{\textbf{u}}^{n+1} \right)  \\
& \ \ \ \ \ \ 
\leq \frac{ \nu }{16} \| \nabla e_{ \textbf{u} }^{n+1} \|^2 + C \| \textbf{b}( t^{n+1} )  \|_2^2 \| e_{ \textbf{b} } ^n \|^2 + C \Delta t \int_{t^n}^{t^{n+1}} \| \textbf{b}_t \|^2 dt . 
\endaligned
\end{equation}
Finally,  combining \eqref{e_error_u_inner_product} with \eqref{e_error_u_nonlinear_convective}-\eqref{e_error_u_Lorentz_force5} leads to the desired result.
\begin{comment}
we obtain
\begin{equation*}\label{e_error_u_combine}
\aligned
\frac{\| e_{\textbf{u}}^{n+1}\|^2-\|e_{\textbf{u}}^n\|^2}{2\Delta t}&+\frac{\| e_{\textbf{u}}^{n+1}-e_{\textbf{u}}^n\|^2}{2\Delta t}+\nu \| \nabla e_{\textbf{u}}^{n+1}\|^2  \\
\leq & \alpha \exp( \frac{t^{n+1}}{T} ) e_q^{n+1}  \left( \nabla \times \textbf{b}^{n}  \times \textbf{b}^{n}, e_{\textbf{u}}^{n+1} \right)  -  \exp( \frac{t^{n+1}}{T} )  e_q^{n+1} \left((\textbf{u}^n \cdot \nabla)\textbf{u}^n, e_{\textbf{u}}^{n+1}\right) \\
& + \frac{\nu}{2} \|\nabla e_{\textbf{u}}^{n+1} \|^2 + C ( \| \textbf{u}(t^n)\|_2^2 + \| \textbf{u}( t^{n+1} )\|_2^2 +\|e_{\textbf{u}}^n\|^2_1)  \|e_{\textbf{u}}^n\|^2  \\
& + C( \| e_{ \textbf{b} }^{n} \|_1^2 +  \| \textbf{b}(t^n) \|_2^2 + \| \textbf{b}( t^{n+1} )  \|_2^2 ) \| e_{ \textbf{b} }^{n} \|^2   \\
 &+  C \Delta t \int_{t^n}^{t^{n+1}} (  \|\textbf{u}_t \|_2^2 + \|\textbf{u}_{tt}\|_{-1}^2 +  \|\textbf{b}_t \|^2_2 ) dt,
\endaligned
\end{equation*}
which implies the desired result.
\end{comment}
\end{proof}

% ********************************************************************************************
\medskip
We derive below a bound for the errors of the magnetic field.
\begin{lemma}\label{lem: error_estimate_b} Under the assumptions of Theorem \ref{thm: error_estimate_ubq},
 we have
\begin{equation}\label{lem3.5}
\aligned
\frac{\| e_{\textbf{b}}^{n+1}\|^2-\|e_{\textbf{b}}^n\|^2}{2\Delta t}&+\frac{\| e_{\textbf{b}}^{n+1}-e_{\textbf{b}}^n\|^2}{2\Delta t} +  \frac{\eta}{ 2 } \| \nabla e_{\textbf{b}}^{n+1}\|^2  \\
\le &  -  \exp( \frac{t^{n+1}}{T} ) e_q^{n+1} \left( \nabla \times ( \textbf{b}^n \times \textbf{u}^n ), e_{\textbf{b}}^{n+1} \right) + C ( \| \textbf{u}(t^{n+1}) \|_2^2 + \| e_{ \textbf{b} }^n \|_1^2 )  \| e_{ \textbf{b} } ^n \|^2  \\
&  + C ( \| e_{ \textbf{u} }^n \|_1^2+  \|  \textbf{b}(t^{n+1}) \|^2_2 ) \| e_{ \textbf{u} }^n \|^2 
+C \Delta t \int_{t^n}^{t^{n+1}}  \|\textbf{u}_t \|^2_2  dt  \\
 & +  C  \Delta t \int_{t^n}^{t^{n+1}} ( \|\textbf{b}_t \|^2 + \|\textbf{b}_{tt}\|_{-1}^2 )dt  ,  \ \ \ \forall \ 0\leq n \leq N-1,
\endaligned
\end{equation}
where $C$ is a positive constant  independent of $\Delta t$.
\end{lemma}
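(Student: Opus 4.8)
The plan is to mirror the structure of the proof of Lemma~\ref{lem: error_estimate_u} for the velocity, now working with the magnetic-field equation \eqref{e_SAV_scheme_first_b}. First I would define the truncation error for the magnetic field,
\begin{equation*}
\textbf{R}_{\textbf{b}}^{n+1}=\frac{\partial \textbf{b}(t^{n+1})}{\partial t}-\frac{\textbf{b}(t^{n+1})-\textbf{b}(t^n)}{\Delta t}=\frac{1}{\Delta t}\int_{t^n}^{t^{n+1}}(t^n-t)\frac{\partial^2\textbf{b}}{\partial t^2}\,dt,
\end{equation*}
exactly as in \eqref{e_error_Ru}. Subtracting \eqref{e_MHD_model_SAV2} at $t^{n+1}$ from \eqref{e_SAV_scheme_first_b} yields the error equation
\begin{equation*}
d_t e_{\textbf{b}}^{n+1}+\eta\nabla\times(\nabla\times e_{\textbf{b}}^{n+1})=\textbf{R}_{\textbf{b}}^{n+1}+\exp(\tfrac{t^{n+1}}{T})\big(q(t^{n+1})\nabla\times(\textbf{b}(t^{n+1})\times\textbf{u}(t^{n+1}))-q^{n+1}\nabla\times(\textbf{b}^n\times\textbf{u}^n)\big).
\end{equation*}
Taking the $L^2$ inner product with $e_{\textbf{b}}^{n+1}$, I would use the identity \eqref{e_identity_Euler} on the time-difference term and the identity \eqref{e_curl(curl b)} together with the boundary conditions in \eqref{e_SAV_scheme_first_boundary} and the inequality \eqref{e_norm curl_div} to convert the $\eta\nabla\times(\nabla\times e_{\textbf{b}}^{n+1})$ term into $\eta\|\nabla e_{\textbf{b}}^{n+1}\|^2$, giving the three good terms on the left of \eqref{lem3.5}.

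The heart of the argument is splitting the nonlinear curl term. I would write it as an $e_q^{n+1}$ piece plus a difference piece, the same decomposition used in \eqref{e_error_u_nonlinear_convective}:
\begin{equation*}
\aligned
&\exp(\tfrac{t^{n+1}}{T})\big(q(t^{n+1})\nabla\times(\textbf{b}(t^{n+1})\times\textbf{u}(t^{n+1}))-q^{n+1}\nabla\times(\textbf{b}^n\times\textbf{u}^n),e_{\textbf{b}}^{n+1}\big)\\
&=-\exp(\tfrac{t^{n+1}}{T})e_q^{n+1}\big(\nabla\times(\textbf{b}^n\times\textbf{u}^n),e_{\textbf{b}}^{n+1}\big)+\exp(\tfrac{t^{n+1}}{T})q(t^{n+1})\big(\nabla\times(\textbf{b}(t^{n+1})\times\textbf{u}(t^{n+1})-\textbf{b}^n\times\textbf{u}^n),e_{\textbf{b}}^{n+1}\big).
\endaligned
\end{equation*}
The first piece is carried unevaluated to the right-hand side of \eqref{lem3.5}; it will be cancelled later against the matching $e_q^{n+1}$ terms from Lemmas~\ref{lem: error_estimate_u} and the $q$-equation estimate. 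For the second piece I would insert $\textbf{b}(t^{n+1})\times\textbf{u}(t^{n+1})-\textbf{b}^n\times\textbf{u}^n=(\textbf{b}(t^{n+1})-\textbf{b}^n)\times\textbf{u}(t^{n+1})+\textbf{b}^n\times(\textbf{u}(t^{n+1})-\textbf{u}^n)$ and further split each error into a time-truncation part $\textbf{b}(t^{n+1})-\textbf{b}(t^n)$ (bounded by $\Delta t\int\|\textbf{b}_t\|^2$) and a numerical-error part $e_{\textbf{b}}^n$ (resp.\ $e_{\textbf{u}}^n$), paralleling \eqref{e_error_u_nonlinear_convective1}--\eqref{e_error_u_nonlinear_convective2}.

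To estimate these terms I would integrate by parts using \eqref{e_integration by parts1} to move the $\nabla\times$ off the difference and onto $e_{\textbf{b}}^{n+1}$, employing the expansion \eqref{e_cross_product2_plus} together with the skew-symmetry \eqref{e_cross_product3}; the boundary contributions vanish because $e_{\textbf{b}}^{n+1}$ and its curl satisfy the homogeneous conditions in \eqref{e_SAV_scheme_first_boundary}. Each resulting trilinear term is controlled via \eqref{e_norm L4}, \eqref{e_estimate for trilinear form} and the uniform bounds of Lemma~\ref{lem_L2H1_boundedness}, absorbing every gradient factor into $\tfrac{\eta}{2}\|\nabla e_{\textbf{b}}^{n+1}\|^2$ by Young's inequality; this produces the terms $C(\|\textbf{u}(t^{n+1})\|_2^2+\|e_{\textbf{b}}^n\|_1^2)\|e_{\textbf{b}}^n\|^2$ and $C(\|e_{\textbf{u}}^n\|_1^2+\|\textbf{b}(t^{n+1})\|_2^2)\|e_{\textbf{u}}^n\|^2$, while $\textbf{R}_{\textbf{b}}^{n+1}$ contributes $C\Delta t\int\|\textbf{b}_{tt}\|_{-1}^2$ as in \eqref{e_error_inner_Ru}. \textbf{The main obstacle} is the cross term $\big(\nabla\times(\textbf{b}^n\times(\textbf{u}(t^{n+1})-\textbf{u}^n)),e_{\textbf{b}}^{n+1}\big)$: here the velocity error $e_{\textbf{u}}^n$ must be controlled in the $L^2$ norm only (so that the final estimate closes when summed against the velocity estimate), which forces a careful choice among the five bounds in \eqref{e_estimate for trilinear form} and essential use of the two-dimensional inequality \eqref{e_norm L4} to place one half-order of regularity on $\textbf{b}^n$ (bounded via Lemma~\ref{lem_L2H1_boundedness}) rather than on the error. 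Combining all estimates gives \eqref{lem3.5}.
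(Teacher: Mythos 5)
Your proposal is correct and follows essentially the same route as the paper: the same truncation error $\textbf{R}_{\textbf{b}}^{n+1}$, the same error equation, the same three-way splitting of the nonlinear curl term with the $e_q^{n+1}$ piece carried unevaluated for later cancellation, integration by parts to move the curl onto $e_{\textbf{b}}^{n+1}$, and the two-dimensional $L^4$ inequality \eqref{e_norm L4} to handle the quadratic error term $e_{\textbf{b}}^n\times e_{\textbf{u}}^n$. The only cosmetic difference is that the paper converts $\eta\nabla\times(\nabla\times e_{\textbf{b}}^{n+1})$ to $-\eta\Delta e_{\textbf{b}}^{n+1}$ directly via \eqref{e_curl(curl b)} and $\nabla\cdot e_{\textbf{b}}^{n+1}=0$ rather than invoking \eqref{e_norm curl_div}, which you also mention as an alternative.
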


\begin{proof}
 Let $\textbf{R}_{\textbf{b}}^{n+1}$ be the truncation error defined by 
\begin{equation}\label{e_error_Rb}
\aligned
\textbf{R}_{\textbf{b}}^{n+1}=\frac{\partial \textbf{b}(t^{n+1})}{\partial t}- \frac{\textbf{b}(t^{n+1})-\textbf{b}(t^{n})}{\Delta t}=\frac{1}{\Delta t}\int_{t^n}^{t^{n+1}}(t^n-t)\frac{\partial^2 \textbf{b}}{\partial t^2}dt.
\endaligned
\end{equation}
Subtracting \eqref{e_MHD_model_SAV2} at $t^{n+1}$ from \eqref{e_SAV_scheme_first_b} and using \eqref{e_curl(curl b)}, we obtain
\begin{equation}\label{e_error_b}
\aligned
 d_t e_{\textbf{b} }^{n+1} - \eta \Delta e_{\textbf{b} }^{n+1} 
= & \exp( \frac{t^{n+1}}{T} ) q(t^{n+1}) \nabla \times ( \textbf{b}(t^{n+1}) \times \textbf{u}(t^{n+1}) ) \\
& - \exp( \frac{t^{n+1} }{T} ) q^{n+1} \nabla \times ( \textbf{b}^n \times \textbf{u}^n )
+\textbf{R}_{\textbf{u}}^{n+1} . 
\endaligned
\end{equation}
Taking the inner product of \eqref{e_error_b} with $e_{\textbf{b}}^{n+1}$, we obtain
\begin{equation}\label{e_error_b_inner_product}
\aligned
\frac{\| e_{\textbf{b}}^{n+1}\|^2-\|e_{\textbf{b}}^n\|^2}{2\Delta t}&+\frac{\| e_{\textbf{b}}^{n+1}-e_{\textbf{b}}^n\|^2}{2\Delta t} + \eta \| \nabla e_{\textbf{b}}^{n+1}\|^2  \\
=&  \exp( \frac{t^{n+1}}{T} ) q(t^{n+1}) \left(  \nabla \times ( \textbf{b}(t^{n+1}) \times \textbf{u}(t^{n+1}) )  , e_{\textbf{b}}^{n+1} \right)  \\
&  - \exp( \frac{t^{n+1}}{T} ) q^{n+1} \left(   \nabla \times ( \textbf{b}^n \times \textbf{u}^n ) , e_{\textbf{b}}^{n+1} \right)
+(\textbf{R}_{\textbf{b}}^{n+1}, e_{\textbf{b}}^{n+1}) .\endaligned
\end{equation}
The first two terms on the right hand side of \eqref{e_error_b_inner_product} can be recast as
\begin{equation}\label{e_error_b_nonlinear_convective}
\aligned
 \exp( \frac{t^{n+1}}{T} )& \left(   q(t^{n+1}) \nabla \times ( \textbf{b}(t^{n+1}) \times \textbf{u}(t^{n+1}) ) - q^{n+1} \nabla \times ( \textbf{b}^n \times \textbf{u}^n ), e_{\textbf{b}}^{n+1} \right)  \\
= & \left(  \nabla \times [ ( \textbf{b}(t^{n+1}) - \textbf{b}^n ) \times \textbf{u}(t^{n+1}) ] , e_{\textbf{b}}^{n+1} \right) + \left(  \nabla \times [ \textbf{b}^n  \times ( \textbf{u}(t^{n+1}) - \textbf{u}^n ) ] , e_{\textbf{b}}^{n+1} \right)  \\
& -  \exp( \frac{t^{n+1}}{T} ) e_q^{n+1} \left( \nabla \times ( \textbf{b}^n \times \textbf{u}^n ), e_{\textbf{b}}^{n+1} \right) .
\endaligned
\end{equation}
By using \eqref{e_norm L_infty}, \eqref{e_norm curl} and integration by parts \eqref{e_integration by parts1}, 
we have 
\begin{equation}\label{e_error_b_nonlinear_convective1}
\aligned
  \big(  \nabla \times &[ ( \textbf{b}(t^{n+1}) - \textbf{b}^n ) \times \textbf{u}(t^{n+1}) ] , e_{\textbf{b}}^{n+1} \big) 
=  \left(  ( \textbf{b}(t^{n+1}) - \textbf{b}^n ) \times \textbf{u}(t^{n+1})  , \nabla \times e_{\textbf{b}}^{n+1} \right) \\
\leq & \frac{\eta}{ 6 } \| \nabla e_{\textbf{b}}^{n+1} \|^2 + C \| \textbf{u}(t^{n+1}) \|_2^2  e_{ \textbf{b} } ^n \|^2  
  +  C \| \textbf{u}(t^{n+1}) \|_2^2 \Delta t \int_{t^n}^{t^{n+1}} \|\textbf{b}_t \|^2 dt .
\endaligned
\end{equation}
Thanks to \eqref{e_norm L4} and \eqref{e_norm curl}, we have
\begin{equation}\label{e_error_b_nonlinear_convective2}
\aligned
 \big(  \nabla \times &[ \textbf{b}^n  \times ( \textbf{u}(t^{n+1}) - \textbf{u}^n ) ] , e_{\textbf{b}}^{n+1} \big)  \\
= & \left(  \textbf{b}^n  \times ( \textbf{u}(t^{n+1}) - \textbf{u}^n )  , \nabla \times e_{\textbf{b}}^{n+1} \right) \\
=&  \left(  e_{ \textbf{b} }^n  \times ( \textbf{u}(t^{n+1}) - \textbf{u}(t^n) )  , \nabla \times e_{\textbf{b}}^{n+1} \right) - \left(   e_{ \textbf{b} }^n  \times  e_{ \textbf{u} }^n , \nabla \times e_{\textbf{b}}^{n+1} \right) \\
& +  \left(  \textbf{b}(t^{n+1})  \times ( \textbf{u}(t^{n+1}) - \textbf{u}^n )  , \nabla \times e_{\textbf{b}}^{n+1} \right) \\
\leq &  \frac{\eta}{ 6 } \| \nabla e_{\textbf{b}}^{n+1} \|^2 + C \| e_{ \textbf{b} }^n \|^2_{L^4} \| e_{ \textbf{u} }^n \|^2_{L^4} + C  \|  \textbf{b}(t^{n+1}) \|^2_2 \| e_{ \textbf{u} }^n \|^2 \\
& + C \| e_{ \textbf{b} }^n \|^2 \Delta t \int_{t^n}^{t^{n+1}} \|\textbf{u}_t \|^2_2 dt +
C \|  \textbf{b}(t^{n+1}) \|^2_2 \Delta t \int_{t^n}^{t^{n+1}} \|\textbf{u}_t \|^2 dt \\
\leq &  \frac{\eta}{ 6 } \| \nabla e_{\textbf{b}}^{n+1} \|^2 + C \| e_{ \textbf{b} }^n \|_1^2 \| e_{ \textbf{b} }^n \|^2  + C ( \| e_{ \textbf{u} }^n \|_1^2+  \|  \textbf{b}(t^{n+1}) \|^2_2 ) \| e_{ \textbf{u} }^n \|^2  \\
& + C \| e_{ \textbf{b} }^n \|^2 \Delta t \int_{t^n}^{t^{n+1}} \|\textbf{u}_t \|^2_2 dt +
C \|  \textbf{b}(t^{n+1}) \|^2_2 \Delta t \int_{t^n}^{t^{n+1}} \|\textbf{u}_t \|^2 dt .
\endaligned
\end{equation}
For the last term on the right hand side of \eqref{e_error_b_inner_product}, we have 
\begin{equation}\label{e_error_inner_Rb}
\aligned
&(\textbf{R}_{\textbf{b}}^{n+1}, e_{\textbf{b}}^{n+1})\leq \frac{ \eta }{6} \|\nabla e_{\textbf{b}}^{n+1} \|^2+C \Delta t\int_{t^n}^{t^{n+1}}\|\textbf{b}_{tt}\|_{-1}^2dt.
\endaligned
\end{equation}
Combining \eqref{e_error_b_inner_product} with \eqref{e_error_b_nonlinear_convective}-\eqref{e_error_inner_Rb} leads to  the desired result.
\begin{comment}
\begin{equation}\label{e_error_b_combine}
\aligned
\frac{\| e_{\textbf{b}}^{n+1}\|^2-\|e_{\textbf{b}}^n\|^2}{2\Delta t}&+\frac{\| e_{\textbf{b}}^{n+1}-e_{\textbf{b}}^n\|^2}{2\Delta t} + \eta \| \nabla e_{\textbf{b}}^{n+1}\|^2  \\
=&  \frac{\eta}{ 2 } \| \nabla e_{\textbf{b}}^{n+1} \|^2 -  \exp( \frac{t^{n+1}}{T} ) e_q^{n+1} \left( \nabla \times ( \textbf{b}^n \times \textbf{u}^n ), e_{\textbf{b}}^{n+1} \right) \\
& + C ( \| \textbf{u}(t^{n+1}) \|_2^2 + \| e_{ \textbf{b} }^n \|_1^2 ) 
 \| e_{ \textbf{b} } ^n \|^2  + C ( \| e_{ \textbf{u} }^n \|_1^2+  \|  \textbf{b}(t^{n+1}) \|^2_2 ) \| e_{ \textbf{u} }^n \|^2 \\
 & +  C \| \textbf{u}(t^{n+1}) \|_2^2 \Delta t \int_{t^n}^{t^{n+1}} \|\textbf{b}_t \|^2 dt  + C \| e_{ \textbf{b} }^n \|^2 \Delta t \int_{t^n}^{t^{n+1}} \|\textbf{u}_t \|^2_2 dt  \\
& +C \|  \textbf{b}(t^{n+1}) \|^2_2 \Delta t \int_{t^n}^{t^{n+1}} \|\textbf{u}_t \|^2 dt  +C \Delta t\int_{t^n}^{t^{n+1}}\|\textbf{b}_{tt}\|_{-1}^2dt ,
\endaligned
\end{equation}
which implies the desired result. 
\end{comment}
\end{proof}

% ********************************************************************************************
\medskip
In the next lemma, we derive a bound for the errors with respect to $q$.

\begin{lemma}\label{lem: error_estimate_q} Under the assumptions of Theorem \ref{thm: error_estimate_ubq},
 we have
\begin{equation}\label{lem3.6}
\aligned
 \frac{|e_q^{n+1}|^2-|e_q^n|^2}{2\Delta t}&+\frac{|e_q^{n+1}-e_q^n|^2}{2\Delta t}+\frac{1}{2T}|e_q^{n+1}|^2  \\
\leq &  \exp( \frac{t^{n+1}}{T} )  e_q^{n+1} \left(\textbf{u}^n \cdot \nabla\textbf{u}^n, e_{\textbf{u}}^{n+1}\right) -  \alpha \exp( \frac{t^{n+1}}{T} ) e_q^{n+1}  \left( ( \nabla \times \textbf{b}^{n} ) \times \textbf{b}^{n}, e_{\textbf{u}}^{n+1} \right) \\ 
& + \alpha \exp( \frac{t^{n+1}}{T} ) e_q^{n+1} \left( \nabla \times ( \textbf{b}^n \times \textbf{u}^n ), e_{\textbf{b}}^{n+1} \right)  +  C  \|\textbf{u}^n \|_1^2 \| e_{\textbf{u}}^n \|^2  \\
& + C  ( \| e_{\textbf{b}}^n \|_1^2 +  \| \textbf{u} ^n \|_1^2 + \| \textbf{b}( t^{n+1} ) \|_1^2 ) \| e_{\textbf{b}}^n \|^2  + C \Delta t\int_{t^n}^{t^{n+1} } \|q_{tt} \|^2dt  \\
& + C \Delta t \int_{t^n}^{t^{n+1} } ( \|\textbf{u}_t \|_0^2 + \|\textbf{b}_t \|^2_1
)dt  ,   \ \ \ \forall \ 0\leq n \leq N-1,
\endaligned
\end{equation}
where $C$ is a positive constant  independent of $\Delta t$.
\end{lemma}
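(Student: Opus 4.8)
The plan is to mirror the proofs of Lemmas \ref{lem: error_estimate_u} and \ref{lem: error_estimate_b}, adapted to the scalar equation. First I would introduce the truncation error
\[
R_q^{n+1}=\frac{\mathrm{d} q(t^{n+1})}{\mathrm{d} t}-\frac{q(t^{n+1})-q(t^n)}{\Delta t}=\frac{1}{\Delta t}\int_{t^n}^{t^{n+1}}(t^n-t)\frac{\mathrm{d}^2 q}{\mathrm{d} t^2}\,\mathrm{d} t,
\]
subtract the continuous equation \eqref{e_MHD_model_SAV4} at $t^{n+1}$ from the discrete equation \eqref{e_SAV_scheme_first_q}, and obtain an error equation $d_t e_q^{n+1}+\frac1T e_q^{n+1}=R_q^{n+1}+\exp(\frac{t^{n+1}}{T})(N_1+N_2+N_3)$, where $N_1,N_2,N_3$ are the three discrete nonlinear forms in \eqref{e_SAV_scheme_first_q}. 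Multiplying by $e_q^{n+1}$ and using \eqref{e_identity_Euler} produces the three left-hand terms of \eqref{lem3.6} together with a full dissipation $\frac1T|e_q^{n+1}|^2$; I would keep $\frac{1}{2T}|e_q^{n+1}|^2$ on the left and reserve the other $\frac{1}{2T}|e_q^{n+1}|^2$ to absorb, through Young's inequality, the term $R_q^{n+1}e_q^{n+1}$ (which supplies the $q_{tt}$ truncation contribution) and the non-coupling part of the nonlinearity. This explains the factor $\tfrac12$ in the dissipation on the left of \eqref{lem3.6}.

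Next I would split each test function as $\textbf{u}^{n+1}=e_{\textbf{u}}^{n+1}+\textbf{u}(t^{n+1})$ and $\textbf{b}^{n+1}=e_{\textbf{b}}^{n+1}+\textbf{b}(t^{n+1})$ inside $N_1,N_2,N_3$. The pieces carrying $e_{\textbf{u}}^{n+1}$ and $e_{\textbf{b}}^{n+1}$ are exactly the three coupling terms on the right of \eqref{lem3.6}; I would leave them untouched, since they are engineered to cancel the coupling terms of Lemmas \ref{lem: error_estimate_u} and \ref{lem: error_estimate_b} once the three estimates are combined (with the magnetic estimate weighted by $\alpha$). The remaining pieces test the discrete nonlinear forms against the smooth fields $\textbf{u}(t^{n+1}),\textbf{b}(t^{n+1})$. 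The key observation is that the discrete analogue of the continuous cancellation holds exactly: since $\textbf{u}^n$ is divergence-free and satisfies the boundary conditions \eqref{e_SAV_scheme_first_boundary}, the combination $(\textbf{u}^n\cdot\nabla\textbf{u}^n,\textbf{u}^n)-\alpha((\nabla\times\textbf{b}^n)\times\textbf{b}^n,\textbf{u}^n)+\alpha(\nabla\times(\textbf{b}^n\times\textbf{u}^n),\textbf{b}^n)$ vanishes by \eqref{e_skew-symmetric2}, \eqref{e_cross_product3} and \eqref{e_integration by parts1}. Subtracting this zero quantity lets me replace the exact test fields by the small differences $\textbf{u}(t^{n+1})-\textbf{u}^n$ and $\textbf{b}(t^{n+1})-\textbf{b}^n$, which I then decompose as a time increment $\int_{t^n}^{t^{n+1}}\textbf{u}_t\,\mathrm{d} t$ (resp. $\int_{t^n}^{t^{n+1}}\textbf{b}_t\,\mathrm{d} t$) minus the previous-step error $e_{\textbf{u}}^n$ (resp. $e_{\textbf{b}}^n$).

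I would then estimate these differences termwise. For the convective piece, skew-symmetry \eqref{e_skew-symmetric2} eliminates the top-order part and the trilinear bounds \eqref{e_estimate for trilinear form}, \eqref{e_estimate for trilinear form1} give control by $\|\textbf{u}^n\|_1^2\|e_{\textbf{u}}^n\|^2$ plus an $L^2$-in-time increment of $\textbf{u}_t$; for the two magnetic cross-product pieces I would use \eqref{e_cross_product1}, \eqref{e_cross_product3} and the integration-by-parts formula \eqref{e_integration by parts1} to transfer the curl onto the regular fields, the boundary integrals vanishing by \eqref{e_SAV_scheme_first_boundary}, and then apply \eqref{e_norm L4}, \eqref{e_norm L_infty} and the uniform bounds of Lemma \ref{lem_L2H1_boundedness}. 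Each resulting scalar is paired with $e_q^{n+1}$ and split by Young's inequality into a fraction of $\frac{1}{2T}|e_q^{n+1}|^2$ and a quadratic remainder of the type $C\|\textbf{u}^n\|_1^2\|e_{\textbf{u}}^n\|^2+C(\|e_{\textbf{b}}^n\|_1^2+\|\textbf{u}^n\|_1^2+\|\textbf{b}(t^{n+1})\|_1^2)\|e_{\textbf{b}}^n\|^2+C\Delta t\int_{t^n}^{t^{n+1}}(\|\textbf{u}_t\|_0^2+\|\textbf{b}_t\|_1^2)\,\mathrm{d} t$, whose coefficients are either uniformly bounded or controlled in the $\Delta t\sum_n$ sense by Lemma \ref{lem_L2H1_boundedness}. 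Collecting all contributions yields \eqref{lem3.6}.

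The main obstacle I anticipate is the bookkeeping for the two magnetic nonlinear terms: one must route the curl derivatives onto the smooth exact fields (which alone are bounded in $\textbf{H}^2$), check that every boundary integral produced by \eqref{e_integration by parts1} is annihilated by the boundary conditions, and choose the trilinear and H\"older estimates so that the uncontrollable $H^1$ norms of the discrete fields and of the errors enter only as coefficients of $L^2$ error norms---precisely the structure needed for the discrete Gronwall inequality of Lemma \ref{lem: gronwall2} to close in the proof of Theorem \ref{thm: error_estimate_ubq}. The delicate point is that the induction term forces the magnetic truncation to appear in the $\|\textbf{b}_t\|_1$ norm rather than $\|\textbf{b}_t\|_0$, dictated by where the curl ultimately lands.
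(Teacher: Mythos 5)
Your proposal follows essentially the same route as the paper: form the truncation error $\textbf{R}_q^{n+1}$, subtract the continuous $q$-equation from \eqref{e_SAV_scheme_first_q}, multiply by $e_q^{n+1}$, keep the three coupling terms $(\cdot,e_{\textbf{u}}^{n+1})$, $(\cdot,e_{\textbf{b}}^{n+1})$ intact for later cancellation with Lemmas \ref{lem: error_estimate_u}--\ref{lem: error_estimate_b}, and telescope the remaining nonlinear residuals into differences $\textbf{u}(t^{n+1})-\textbf{u}^n$, $\textbf{b}(t^{n+1})-\textbf{b}^n$ bounded via \eqref{e_estimate for trilinear form}, the curl identities, Lemma \ref{lem_L2H1_boundedness} and Young's inequality against fractions of $\frac1T|e_q^{n+1}|^2$. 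The only (harmless) deviation is that you telescope by subtracting the discrete zero identity, whereas the paper's splits in \eqref{e_error_q_nonlinear1}, \eqref{e_error_q_nonlinear5} and \eqref{e_error_q_nonlinear8} place each difference factor directly in a slot where the remaining arguments carry the $H^2$ regularity of the exact solution, which is what lets the truncation appear as $\|\textbf{u}_t\|_0$ rather than a higher norm.
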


\begin{proof}
Subtracting \eqref{e_MHD_model_SAV3} from \eqref{e_SAV_scheme_first_q} leads to
\begin{equation}\label{e_error_q}
\aligned
 \frac{e_q^{n+1}-e_q^n}{\Delta t}&+\frac{1}{T}e_q^{n+1} =  \textbf{R}_{q}^{n+1} \\ 
& + \exp( \frac{t^{n+1}}{T} ) ( (\textbf{u}^n\cdot\nabla  \textbf{u}^n, \textbf{u}^{n+1} ) - (\textbf{u}(t^{n+1})\cdot \nabla  \textbf{u}(t^{n+1}),\textbf{u}(t^{n+1}) )) \\
  & 
- \alpha   \exp( \frac{ t^{n+1} }{T} ) \left(  ( (\nabla \times \textbf{b}^n)  \times \textbf{b}^n, \textbf{u}^{n+1} ) - ( (\nabla \times \textbf{b}(t^{n+1})) \times \textbf{b}(t^{n+1}), \textbf{u}(t^{n+1}) ) \right) \\
  & 
  + \alpha   \exp( \frac{t^{n+1}}{T} ) \left(  ( \nabla \times ( \textbf{b}^n \times \textbf{u}^n ) , \textbf{b}^{n+1} ) - ( \nabla \times ( \textbf{b}(t^{n+1}) \times \textbf{u}(t^{n+1}) ) , \textbf{b}(t^{n+1}) ) \right),
\endaligned
\end{equation}
where 
\begin{equation}\label{e_error_Rq}
\aligned
\textbf{R}_{q}^{n+1}=\frac{ \rm{d} q(t^{n+1})}{ \rm{d} t}- \frac{q(t^{n+1})-q(t^{n})}{\Delta t}=\frac{1}{\Delta t}\int_{t^n}^{t^{n+1}}(t^n-t)\frac{\partial^2 q}{\partial t^2}dt.
\endaligned
\end{equation}
Multiplying both sides of \eqref{e_error_q} by $e_q^{n+1}$ yields
\begin{equation}\label{e_error_q_inner}
\aligned
 &\frac{|e_q^{n+1}|^2-|e_q^n|^2}{2\Delta t}+\frac{|e_q^{n+1}-e_q^n|^2}{2\Delta t}+\frac{1}{T}|e_q^{n+1}|^2 = \textbf{R}_{q}^{n+1}  e_q^{n+1}\\
& \ \ \ \ \  + \exp( \frac{t^{n+1}}{T} ) e_q^{n+1} ( ( \textbf{u}^n\cdot\nabla  \textbf{u}^n, \textbf{u}^{n+1} ) 
  -  (\textbf{u}(t^{n+1})\cdot \nabla  \textbf{u}(t^{n+1}),\textbf{u}(t^{n+1}) )) \\
  & \ \ \ \ \ 
- \alpha   \exp( \frac{ t^{n+1} }{T} ) e_q^{n+1} \left(  ( (\nabla \times \textbf{b}^n ) \times \textbf{b}^n, \textbf{u}^{n+1} ) - ( (\nabla \times \textbf{b}(t^{n+1}) )\times \textbf{b}(t^{n+1}), \textbf{u}(t^{n+1}) ) \right) \\
  & \ \ \ \ \ 
  + \alpha   \exp( \frac{t^{n+1}}{T} ) e_q^{n+1} \left(  ( \nabla \times ( \textbf{b}^n \times \textbf{u}^n ) , \textbf{b}^{n+1} ) - ( \nabla \times ( \textbf{b}(t^{n+1}) \times \textbf{u}(t^{n+1}) ) , \textbf{b}(t^{n+1}) ) \right) .
\endaligned
\end{equation}
We bound the right hand side of the above as follows:
\begin{equation}\label{e_error_q_nonlinear4}
\aligned
\textbf{R}_{q}^{n+1}e_q^{n+1} \leq \frac{1}{12 T}  |e_q^{n+1}|^2+C \Delta t\int_{t^n}^{t^{n+1} } \|q_{tt} \|^2dt .
\endaligned
\end{equation}
The second term  on the right hand side of \eqref{e_error_q_inner} can be estimated as
\begin{equation}\label{e_error_q_nonlinear1}
\aligned
 & \exp( \frac{t^{n+1}}{T} ) e_q^{n+1} \left( ( \textbf{u}^n\cdot\nabla  \textbf{u}^n, \textbf{u}^{n+1} ) -  (  \textbf{u}(t^{n+1})\cdot \nabla  \textbf{u}(t^{n+1}),\textbf{u}(t^{n+1}) )  \right) \\
 = & \exp( \frac{t^{n+1}}{T} )  e_q^{n+1} \left(\textbf{u}^n \cdot \nabla\textbf{u}^n, e_{\textbf{u}}^{n+1}\right)  +  \exp( \frac{t^{n+1}}{T} )  e_q^{n+1} \left(   \textbf{u}^n\cdot\nabla  ( \textbf{u}^n - \textbf{u}(t^{n+1}) ), \textbf{u}(t^{n+1}) \right) \\
 & +  \exp( \frac{t^{n+1}}{T} )  e_q^{n+1}  \left(  ( 
\textbf{u}^n -  \textbf{u}(t^{n+1})) \cdot \nabla  \textbf{u}(t^{n+1}), \textbf{u}(t^{n+1}) \right) .
\endaligned
\end{equation}
Thanks to \eqref{e_estimate for trilinear form} and Lemma \ref{lem_L2H1_boundedness}, we bound the second term  on the right hand side of \eqref{e_error_q_nonlinear1} by
\begin{equation}\label{e_error_q_nonlinear2}
\aligned
&  \exp( \frac{t^{n+1}}{T} )  e_q^{n+1} \left(   \textbf{u}^n\cdot\nabla  ( \textbf{u}^n - \textbf{u}(t^{n+1}) ), \textbf{u}(t^{n+1}) \right) \\ 
& \ \ \ \ \ 
 \leq  C \|\textbf{u}^n \|_1 \|  \textbf{u}(t^{n+1})-  \textbf{u}(t^{n}) - e_{ \textbf{u} }^n  \|_0 \| \textbf{u}(t^{n+1}) \|_{2} |e_q^{n+1}| \\
 & \ \ \ \ \ 
 \leq \frac{1}{12T} |e_q^{n+1}|^2 + C  \|\textbf{u}^n \|_1^2 \| e_{\textbf{u}}^n \|^2 
+C  \| \textbf{u}(t^{n+1}) \|_{2} ^2 \Delta t \int_{t^n}^{t^{n+1} } \|\textbf{u}_t \|_0^2dt .
 \endaligned
\end{equation}
The third term on the right hand side of \eqref{e_error_q_nonlinear1} can be bounded by
\begin{equation}\label{e_error_q_nonlinear3}
\aligned
& \exp( \frac{t^{n+1}}{T} )  e_q^{n+1}  \left( ( 
\textbf{u}^n -  \textbf{u}(t^{n+1}) )\cdot \nabla  \textbf{u}(t^{n+1}), \textbf{u}(t^{n+1}) \right) \\
& \ \ \ \ \ 
\leq C \| \textbf{u}(t^{n+1})-\textbf{u}^n \| \|\textbf{u}(t^{n+1}) \|_1 \|\textbf{u}(t^{n+1}) \|_2 |e_q^{n+1}| \\
& \ \ \ \ \ 
\leq \frac{1}{12T} |e_q^{n+1}|^2 + C\|e_{\textbf{u}}^{n}\|^2 +C\Delta t\int_{t^n}^{t^{n+1} } \|\textbf{u}_t \|^2dt .
\endaligned
\end{equation}

The second to last term on the right hand side of \eqref{e_error_q_inner} can be recast as
\begin{equation}\label{e_error_q_nonlinear5}
\aligned
  - \alpha   \exp( \frac{ t^{n+1} }{T} )& e_q^{n+1} \left(  ( (\nabla \times \textbf{b}^n)  \times \textbf{b}^n, \textbf{u}^{n+1} ) - ( (\nabla \times \textbf{b}(t^{n+1})) \times \textbf{b}(t^{n+1}), \textbf{u}(t^{n+1}) ) \right) \\
= &  \alpha \exp( \frac{t^{n+1}}{T} ) e_q^{n+1}  \left( ( (\nabla \times ( \textbf{b}(t^{n+1}) - 
 \textbf{b}^{n} ) ) \times \textbf{b}^{n}, \textbf{u}(t^{n+1}) \right) \\
& +  \alpha \exp( \frac{t^{n+1}}{T} ) e_q^{n+1} \left( ( (\nabla \times \textbf{b}(t^{n+1} ))  \times ( \textbf{b}(t^{n+1}) -  \textbf{b}^{n} )  , \textbf{u}(t^{n+1}) \right) \\
& -  \alpha \exp( \frac{t^{n+1}}{T} ) e_q^{n+1}  \left( ( \nabla \times \textbf{b}^{n} ) \times \textbf{b}^{n}, e_{\textbf{u}}^{n+1} \right) .
\endaligned
\end{equation}
Thanks to \eqref{e_estimate for trilinear form}, \eqref{e_estimate for trilinear form1} and using the similar procedure  in \eqref{e_error_u_Lorentz_force3}, the first term on the right hand side of \eqref{e_error_q_nonlinear5} can be estimated by
\begin{equation}\label{e_error_q_nonlinear6}
\aligned
   \alpha \exp( \frac{t^{n+1}}{T} )& e_q^{n+1}  \left( ( \nabla \times ( \textbf{b}(t^{n+1}) - 
 \textbf{b}^{n} )  \times \textbf{b}^{n}, \textbf{u}(t^{n+1}) \right) \\
= & - \alpha \exp( \frac{t^{n+1}}{T} ) e_q^{n+1}  \left( ( \nabla \times ( \textbf{u}( t^{n+1} ) \times   \textbf{b}^{n} ), \textbf{b}(t^{n+1}) -  \textbf{b}^{n}  \right) \\
= &  \alpha \exp( \frac{t^{n+1}}{T} ) e_q^{n+1} \left(  ( \textbf{u}( t^{n+1} )  \cdot \nabla  )  \textbf{b}^{n} , \textbf{b}(t^{n+1}) -  \textbf{b}^{n} \right) \\
&  -  \alpha \exp( \frac{t^{n+1}}{T} ) e_q^{n+1} \left(  ( \textbf{b}^{n}  \cdot   \nabla ) \textbf{u}( t^{n+1} ) , \textbf{b}(t^{n+1}) -  \textbf{b}^{n} \right) \\
\leq &  \frac{1}{12 T}  |e_q^{n+1}|^2 + C   \| e_{\textbf{b}}^n \|_1^2 \| e_{\textbf{b}}^n \|^2 + C \| \textbf{u}( t^{n+1} ) \|_2^2 \| \textbf{b}^{n} \| ^2 \Delta t \int_{t^n}^{t^{n+1}} \|\textbf{b}_t \|^2_1 dt .
\endaligned
\end{equation}
For the second term on the right hand side of \eqref{e_error_q_nonlinear5}, we have
\begin{equation}\label{e_error_q_nonlinear7}
\aligned
 & \alpha \exp( \frac{t^{n+1}}{T} ) e_q^{n+1} \left( ( \nabla \times \textbf{b}(t^{n+1} )  \times ( \textbf{b}(t^{n+1}) -  \textbf{b}^{n} )  , \textbf{u}(t^{n+1}) \right) \\
\leq & \frac{1}{12 T}  |e_q^{n+1}|^2 + C \| \textbf{b}( t^{n+1} ) \|_1^2
 \| e_{\textbf{b}}^n \|^2 +  C \| \textbf{u}( t^{n+1} ) \|_2^2 \Delta t \int_{t^n}^{t^{n+1}} \|\textbf{b}_t \|^2 dt .
\endaligned
\end{equation}
Using \eqref{e_norm L4} and \eqref{e_norm curl} and the integration by parts \eqref{e_integration by parts1}, the last term on the right hand side of \eqref{e_error_q_inner} can be bounded by
\begin{equation}\label{e_error_q_nonlinear8}
\aligned
 \alpha   \exp( \frac{t^{n+1}}{T} ) & e_q^{n+1} \left(  ( \nabla \times ( \textbf{b}^n \times \textbf{u}^n ) , \textbf{b}^{n+1} ) - ( \nabla \times ( \textbf{b}(t^{n+1}) \times \textbf{u}(t^{n+1}) ) , \textbf{b}(t^{n+1}) ) \right) \\
 \leq & \alpha \exp( \frac{t^{n+1}}{T} ) e_q^{n+1} \left( \nabla \times ( ( \textbf{b}^n -  \textbf{b}(t^{n+1}) ) \times \textbf{u}^n ),  \textbf{b}(t^{n+1})  \right)  \\
 & +  \alpha \exp( \frac{t^{n+1}}{T} ) e_q^{n+1} \left( \nabla \times ( \textbf{b}(t^{n+1})  \times ( \textbf{u}^n -  \textbf{u}(t^{n+1}) ) ),  \textbf{b}(t^{n+1})  \right) \\
 & +  \alpha \exp( \frac{t^{n+1}}{T} ) e_q^{n+1} \left( \nabla \times ( \textbf{b}^n \times \textbf{u}^n ), e_{\textbf{b}}^{n+1} \right) \\
 \leq &   \alpha \exp( \frac{t^{n+1}}{T} ) e_q^{n+1} \left( \nabla \times ( \textbf{b}^n \times \textbf{u}^n ), e_{\textbf{b}}^{n+1} \right) + \frac{1}{12 T}  |e_q^{n+1}|^2  \\
 & + C \| \textbf{u} ^n \|_1^2 \| e_{ \textbf{b} } ^n\|^2 
 + C  \| e_{ \textbf{u} } ^n\|^2 + C \| \textbf{b}(t^{n+1} )\|^2_2 \Delta t \int_{t^n}^{t^{n+1}} ( \|\textbf{b}_t \|^2_1 + \|\textbf{u}_t \|^2 ) dt .
\endaligned
\end{equation}
Finally, combining  \eqref{e_error_q_nonlinear1}-\eqref{e_error_q_nonlinear8} in \eqref{e_error_q_inner} 
leads to the desired result.
\begin{comment}
 results in 
\begin{equation}\label{e_error_q_combine}
\aligned
 \frac{|e_q^{n+1}|^2-|e_q^n|^2}{2\Delta t}&+\frac{|e_q^{n+1}-e_q^n|^2}{2\Delta t}+\frac{1}{2T}|e_q^{n+1}|^2  \\
\leq &  \exp( \frac{t^{n+1}}{T} )  e_q^{n+1} \left((\textbf{u}^n \cdot \nabla)\textbf{u}^n, e_{\textbf{u}}^{n+1}\right) -  \alpha \exp( \frac{t^{n+1}}{T} ) e_q^{n+1}  \left( ( \nabla \times \textbf{b}^{n} ) \times \textbf{b}^{n}, e_{\textbf{u}}^{n+1} \right) \\ 
& + \alpha \exp( \frac{t^{n+1}}{T} ) e_q^{n+1} \left( \nabla \times ( \textbf{b}^n \times \textbf{u}^n ), e_{\textbf{b}}^{n+1} \right)  +  C  \|\textbf{u}^n \|_1^2 \| e_{\textbf{u}}^n \|^2  \\
& + C  ( \| e_{\textbf{b}}^n \|_1^2 +  \| \textbf{u} ^n \|_1^2 + \| \textbf{b}( t^{n+1} ) \|_1^2 ) \| e_{\textbf{b}}^n \|^2  + C \Delta t\int_{t^n}^{t^{n+1} } \|q_{tt} \|^2dt  \\
& + C (  \| \textbf{u}(t^{n+1}) \|_{2} ^2 +  \| \textbf{b}(t^{n+1} ) \|^2_2 )\Delta t \int_{t^n}^{t^{n+1} } \|\textbf{u}_t \|_0^2dt \\
& + C ( \| \textbf{u}( t^{n+1} ) \|_2^2 + \| \textbf{b}(t^{n+1} ) \|^2_2 ) \Delta t \int_{t^n}^{t^{n+1}} \|\textbf{b}_t \|^2_1 dt , 
\endaligned
\end{equation}
which implies the desired result.
\end{comment}
\end{proof}

% ********************************************************************************************
Now we are in the position to prove  Theorem \ref{thm: error_estimate_ubq}  by using Lemmas \ref{lem: error_estimate_u}-\ref{lem: error_estimate_q}.

{\it Proof of Theorem \ref{thm: error_estimate_ubq}.}

Multiplying both sides of \eqref{lem3.5}  %{e_error_b_combine} 
by $\alpha$ and 
summing up this inequality with \eqref{lem3.4}  %{e_error_u_combine} 
and  \eqref{lem3.6} %{e_error_q_combine} 
lead to 
\begin{equation}\label{e_error_ubq_final1}
\aligned
&\frac{\| e_{\textbf{u}}^{n+1}\|^2-\|e_{\textbf{u}}^n\|^2}{2\Delta t}+\frac{\| e_{\textbf{u}}^{n+1}-e_{\textbf{u}}^n\|^2}{2\Delta t}+\frac{ \nu }{2} \| \nabla e_{\textbf{u}}^{n+1}\|^2  + \alpha \frac{\| e_{\textbf{b}}^{n+1}\|^2-\|e_{\textbf{b}}^n\|^2}{2\Delta t}  \\
& + \alpha \frac{\| e_{\textbf{b}}^{n+1}-e_{\textbf{b}}^n\|^2}{2\Delta t} + \frac{\alpha \eta }{ 2 } \| \nabla e_{\textbf{b}}^{n+1}\|^2  + \frac{|e_q^{n+1}|^2-|e_q^n|^2}{2\Delta t}+\frac{|e_q^{n+1}-e_q^n|^2}{2\Delta t}+\frac{1}{2T}|e_q^{n+1}|^2  \\
\leq &  C (  \|  \textbf{b}(t^{n+1}) \|^2_2  +\|e_{\textbf{u}}^n\|^2_1)  \|e_{\textbf{u}}^n\|^2   + C( \| e_{ \textbf{b} }^{n} \|_1^2 +  \| \textbf{u} ^n \|_1^2  ) \| e_{ \textbf{b} }^{n} \|^2   \\
 &+  C \Delta t \int_{t^n}^{t^{n+1}} (  \|\textbf{u}_t \|_2^2 + \|\textbf{u}_{tt}\|_{-1}^2 + \|q_{tt} \|^2 ) dt \\
  &+  C \Delta t \int_{t^n}^{t^{n+1}} (   \|\textbf{b}_t \|^2_2 + \|\textbf{b}_{tt}\|_{-1}^2 ) dt . 
\endaligned
\end{equation}
Multiplying \eqref{e_error_ubq_final1} by $2\Delta t$ and summing over $n$, $n=0,1,\ldots,m$, and applying the discrete Gronwall lemma  \ref{lem: gronwall2}, we have 
\begin{equation}\label{e_error_ubq_final2}
\aligned
& \| e_{\textbf{u}}^{m +1}\|^2 +  \| e_{\textbf{b}}^{n+1} \|^2 + |e_q^{m+1}|^2 +
\nu \Delta t \sum\limits_{n=0}^{m} \| \nabla e_{\textbf{u}}^{n+1}\|^2 \\
& + \eta \Delta t \sum\limits_{n=0}^{m} \| \nabla e_{\textbf{b}}^{n+1}\|^2 + \Delta t \sum\limits_{n=0}^{m} |  e_{q}^{n+1} |^2
 + \sum\limits_{n=0}^{m} \| e_{\textbf{u}}^{n+1}-e_{\textbf{u}}^n\|^2 \\
& + \sum\limits_{n=0}^{m} \| e_{\textbf{b}}^{n+1}-e_{\textbf{b}}^n\|^2 + 
\sum\limits_{n=0}^{m} | e_q^{n+1}-e_q^n |^2 \\
\leq &  C ( \|\textbf{u}\|_{H^1(0,T;H^2( {\Omega}) )}^2 +  \|\textbf{u}\|_{H^2(0,T;H^{-1} ( {\Omega}) )}^2 + \| \textbf{u} \|_{L^{\infty}(0,T; \textbf{H}^2(\Omega) )} ^2  ) (\Delta t)^2 \\
& +  C ( \|\textbf{b}\|_{H^1(0,T;H^2( {\Omega}) )}^2 +  \|\textbf{b}\|_{H^2(0,T;H^{-1} {\Omega}) )}^2 )  (\Delta t)^2 \\
& +  C ( \| \textbf{b} \|_{L^{\infty}(0,T; \textbf{H}^2(\Omega) )} ^2 + \|q\|_{H^2(0,T)}^2 )  (\Delta t)^2,
\endaligned
\end{equation}
which concludes the proof of Theorem \ref{thm: error_estimate_ubq}.

% ***********************************************************************************************
\subsection{Error estimates for the pressure}
The main result in this section is the following error estimate for the pressure.

\begin{theorem}\label{thm: error_estimate_p}
Assuming $\textbf{u}\in H^2(0,T;\textbf{L}^{2}(\Omega))\bigcap H^1(0,T;\textbf{H}^2(\Omega))\bigcap L^{\infty}(0,T; \textbf{H}^2(\Omega) )$,   $\textbf{b}\in H^2(0,T;\textbf{L}^{2}(\Omega))\bigcap H^1(0,T;\textbf{H}^2(\Omega))\bigcap L^{\infty}(0,T; \textbf{H}^2(\Omega) )$,  $p\in L^2(0,T; L^2_0(\Omega) )$, 
then for the first-order  scheme \eqref{e_SAV_scheme_first_u}-\eqref{e_SAV_scheme_first_q}, we have 
\begin{equation}\label{e_error_p_L2}
\aligned
&\Delta t\sum\limits_{n=0}^m\|e_{p}^{n+1}\|^2_{L^2(\Omega)/R}  \leq C(\Delta t)^2,  \ \ \ \forall \ 0\leq m\leq N-1,
\endaligned
\end{equation}
where $C$ is a positive constant  independent of $\Delta t$.
\end{theorem}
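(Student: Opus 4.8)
The plan is to control the pressure error through the inf-sup (LBB) condition for the velocity--pressure pair, thereby reducing the estimate of $\|e_p^{n+1}\|_{L^2(\Omega)/R}$ to the residual of the discrete momentum equation. Since both $\textbf{u}^{n+1}$ and $\textbf{u}(t^{n+1})$ are divergence free, I rewrite the velocity error equation \eqref{e_error_u} as
\begin{equation*}
\aligned
\nabla e_p^{n+1} = -d_t e_{\textbf{u}}^{n+1} + \nu\Delta e_{\textbf{u}}^{n+1} + \textbf{R}_{\textbf{u}}^{n+1} + \textbf{N}^{n+1},
\endaligned
\end{equation*}
where $\textbf{N}^{n+1}$ collects the convective and Lorentz-force differences already decomposed in the proof of Lemma \ref{lem: error_estimate_u}. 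Pairing with an arbitrary $\textbf{v}\in\textbf{H}^1_0(\Omega)$, integrating the viscous term by parts (no boundary contribution), and invoking $\beta\|e_p^{n+1}\|_{L^2(\Omega)/R}\leq\sup_{\textbf{v}\neq 0}(e_p^{n+1},\nabla\cdot\textbf{v})/\|\nabla\textbf{v}\|$ together with Poincar\'e's inequality \eqref{e_norm H1}, I obtain the pointwise-in-time bound
\begin{equation*}
\aligned
\|e_p^{n+1}\|_{L^2(\Omega)/R}\leq C\big(\|d_t e_{\textbf{u}}^{n+1}\| + \|\nabla e_{\textbf{u}}^{n+1}\| + \|\textbf{R}_{\textbf{u}}^{n+1}\|_{-1} + \|\textbf{N}^{n+1}\|_{-1}\big).
\endaligned
\end{equation*}

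Squaring, multiplying by $\Delta t$ and summing over $n$, the gradient term is absorbed directly by the $O((\Delta t)^2)$ bound on $\Delta t\sum_n\|\nabla e_{\textbf{u}}^{n+1}\|^2$ from Theorem \ref{thm: error_estimate_ubq}, while $\|\textbf{R}_{\textbf{u}}^{n+1}\|_{-1}^2\leq\Delta t\int_{t^n}^{t^{n+1}}\|\textbf{u}_{tt}\|_{-1}^2\,dt$ contributes $(\Delta t)^2\|\textbf{u}_{tt}\|_{L^2(0,T;\textbf{H}^{-1})}^2$ after summation. The nonlinear residual $\textbf{N}^{n+1}$ is measured in $\textbf{H}^{-1}$ using the same splittings and the identities \eqref{e_cross_product1}--\eqref{e_integration by parts1} as in Lemma \ref{lem: error_estimate_u}, every factor being controlled by the uniform bounds of Lemma \ref{lem_L2H1_boundedness} together with the $O((\Delta t)^2)$ error bounds of Theorem \ref{thm: error_estimate_ubq}; these again sum to $O((\Delta t)^2)$.

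The genuine obstacle is the term $\Delta t\sum_n\|d_t e_{\textbf{u}}^{n+1}\|^2$: the bound $\sum_n\|e_{\textbf{u}}^{n+1}-e_{\textbf{u}}^n\|^2\leq C(\Delta t)^2$ from Theorem \ref{thm: error_estimate_ubq} only yields $\Delta t\sum_n\|d_t e_{\textbf{u}}^{n+1}\|^2\leq C\Delta t$, which is one power of $\Delta t$ short of what \eqref{e_error_p_L2} demands. To recover the optimal rate I would establish a separate higher-order estimate by testing \eqref{e_error_u} with $d_t e_{\textbf{u}}^{n+1}$: because $d_t e_{\textbf{u}}^{n+1}$ is divergence free the pressure term drops out, the viscous term telescopes via $(\nabla e_{\textbf{u}}^{n+1},\nabla d_t e_{\textbf{u}}^{n+1})=\tfrac{1}{2\Delta t}(\|\nabla e_{\textbf{u}}^{n+1}\|^2-\|\nabla e_{\textbf{u}}^n\|^2+\|\nabla(e_{\textbf{u}}^{n+1}-e_{\textbf{u}}^n)\|^2)$, and the truncation error is now measured in $\textbf{L}^2$, where the strengthened hypothesis $\textbf{u}\in H^2(0,T;\textbf{L}^2(\Omega))$ gives $\|\textbf{R}_{\textbf{u}}^{n+1}\|^2\leq\Delta t\int_{t^n}^{t^{n+1}}\|\textbf{u}_{tt}\|^2\,dt$. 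The convective and Lorentz contributions paired against $d_t e_{\textbf{u}}^{n+1}$ are split exactly as before, bounding each nonlinear factor in $\textbf{L}^2$ through \eqref{e_norm L4}--\eqref{e_norm L_infty} and the trilinear estimates \eqref{e_estimate for trilinear form1}; a portion $\tfrac14\|d_t e_{\textbf{u}}^{n+1}\|^2$ is absorbed to the left while the remaining factors are controlled by the already-established $O((\Delta t)^2)$ quantities for $\textbf{u}$, $\textbf{b}$ and $q$. Summing over $n$, telescoping the viscous term, and applying the discrete Gronwall Lemma \ref{lem: gronwall2} where needed then yields $\Delta t\sum_n\|d_t e_{\textbf{u}}^{n+1}\|^2\leq C(\Delta t)^2$, which closes the argument and establishes \eqref{e_error_p_L2}.
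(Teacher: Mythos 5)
Your overall architecture matches the paper's: reduce $\|e_p^{n+1}\|_{L^2(\Omega)/R}$ to the momentum residual via the inf-sup duality \eqref{e_error_p2}, observe that the only quantity not already supplied by Theorem \ref{thm: error_estimate_ubq} is $\Delta t\sum_n\|d_t e_{\textbf{u}}^{n+1}\|^2$, and prove a separate higher-order estimate for it. You have correctly identified the crux. However, the specific device you propose for that higher-order estimate --- testing \eqref{e_error_u} with $d_t e_{\textbf{u}}^{n+1}$ alone --- does not close. When the convective and Lorentz differences are paired against $d_t e_{\textbf{u}}^{n+1}$ they must be measured in $\textbf{L}^2$, and in two dimensions every available bound (the second and third lines of \eqref{e_estimate for trilinear form1}, or \eqref{e_norm L4}--\eqref{e_norm L_infty}) places an $H^2$-type norm on at least one factor. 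For terms such as $(\textbf{u}^{n}\cdot\nabla e_{\textbf{u}}^{n},\,d_t e_{\textbf{u}}^{n+1})$ or $((\nabla\times\textbf{b}^{n})\times e_{\textbf{b}}^{n},\,d_t e_{\textbf{u}}^{n+1})$ that $H^2$ norm necessarily falls on the \emph{error} (or on the discrete solution, which is not known to be bounded in $\textbf{H}^2$), producing factors $\|Ae_{\textbf{u}}^{n}\|$ and $\|\Delta e_{\textbf{b}}^{n}\|$. Theorem \ref{thm: error_estimate_ubq} controls the errors only in $L^2$ and (summed) $H^1$, so these factors are \emph{not} among your ``already-established $O((\Delta t)^2)$ quantities,'' and after Young's inequality they leave uncancelled terms $\|Ae_{\textbf{u}}^{n}\|^2$ and $\|\Delta e_{\textbf{b}}^{n}\|^2$ on the right-hand side with nothing on the left to absorb them: your test function generates only the telescoping $\|\nabla e_{\textbf{u}}^{n+1}\|^2$ and no $H^2$ dissipation.

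This is exactly why the paper tests the velocity error equation with $Ae_{\textbf{u}}^{n+1}+d_t e_{\textbf{u}}^{n+1}$ rather than with $d_t e_{\textbf{u}}^{n+1}$ alone: the extra term contributes $\nu\|Ae_{\textbf{u}}^{n+1}\|^2$ to the left-hand side of \eqref{e_error_Au}, which absorbs the $\tfrac{\nu}{8}\|Ae_{\textbf{u}}^{n}\|^2$ contributions in \eqref{e_error_Au_nonlinear2} and \eqref{e_error_Au_nonlinear4} after summation. Moreover the Lorentz terms bring in $\tfrac{\eta}{8}\|\Delta e_{\textbf{b}}^{n}\|^2$ (see \eqref{e_error_Au_nonlinear6}--\eqref{e_error_Au_nonlinear7}), which cannot be absorbed by the velocity estimate at all; the paper must therefore run a \emph{coupled} companion estimate, testing the magnetic error equation \eqref{e_error_b} with $-\Delta e_{\textbf{b}}^{n+1}+d_t e_{\textbf{b}}^{n+1}$ to produce $\eta\|\Delta e_{\textbf{b}}^{n+1}\|^2$ on the left, and only the sum \eqref{e_error_AuAb_final1} of the two estimates closes under the discrete Gronwall lemma. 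Your proposal makes no mention of any higher-order estimate for the magnetic error, so even if you repaired the velocity test function, the $\|\Delta e_{\textbf{b}}^{n}\|^2$ terms would remain uncontrolled. To fix the argument, replace your single test with the pair $Ae_{\textbf{u}}^{n+1}+d_t e_{\textbf{u}}^{n+1}$ and $-\Delta e_{\textbf{b}}^{n+1}+d_t e_{\textbf{b}}^{n+1}$ and sum the resulting inequalities before applying Gronwall, as in \eqref{e_error_Au}--\eqref{e_error_AuAb_final3}.
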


\begin{proof}
In order to prove the above results, we need to first establish an estimate on $ \| d_te_{\textbf{u}}^{n+1} \|$.    

Thanks to Theorem \ref{thm: error_estimate_ubq}, we have
\begin{equation}\label{e_error_ubH1_boundedness1}
\aligned
& \| e_{\textbf{u}}^{m+1}\|^2 + \| e_{\textbf{b}}^{m+1}\|^2 + \Delta t\sum\limits_{n=0}^m  (  \|\nabla e_{\textbf{u}}^{n+1}\|^2 + \|\nabla e_{\textbf{b}}^{n+1}\|^2 ) \leq C(\Delta t)^2,
\endaligned
\end{equation}
which implies that 
\begin{equation}\label{e_error_ubH1_boundedness2}
\aligned
&\| \textbf{u}^{n+1} \|_{1} \leq C \left( (\Delta t)^{1/2} + \| \textbf{u}(t^{n+1}) \|_1  \right), \ \ 
 \| \textbf{b}^{n+1} \|_{1} \leq C \left( (\Delta t)^{1/2} +  \| \textbf{b}(t^{n+1}) \|_1  \right).
\endaligned
\end{equation}

Taking the inner product of \eqref{e_error_u} with $ Ae_{\textbf{u}}^{n+1}+d_t e_{\textbf{u}}^{n+1} $, we obtain
\begin{equation}\label{e_error_Au}
\aligned
& (1+\nu )  \frac{ \| \nabla e_{\textbf{u}}^{n+1} \|^2 - \| \nabla e_{\textbf{u}}^{n} \|^2 }{ 2\Delta t  } + \| d_t e_{\textbf{u}}^{n+1} \|^2 + \nu \| A e_{\textbf{u}}^{n+1} \|^2 \\
= & \exp( \frac{t^{n+1}}{T} )  \left(  q(t^{n+1}) \textbf{u}(t^{n+1})\cdot \nabla\textbf{u}(t^{n+1}) - q^{n+1}   \textbf{u}^{n}\cdot \nabla  \textbf{u}^{n} ,  Ae_{\textbf{u}}^{n+1}+d_t e_{\textbf{u}}^{n+1} \right) \\
&+ \alpha \exp( \frac{t^{n+1}}{T} )  \left(  q^{n+1} ( \nabla \times \textbf{b}^{n} ) \times \textbf{b}^{n}  - q(t^{n+1}) ( \nabla \times \textbf{b}(t^{n+1}) ) \times \textbf{b}(t^{n+1} ) ,  Ae_{\textbf{u}}^{n+1}+d_t e_{\textbf{u}}^{n+1} \right) \\
& + ( \textbf{R}_{\textbf{u}}^{n+1} , Ae_{\textbf{u}}^{n+1}+d_t e_{\textbf{u}}^{n+1}  ) .
\endaligned
\end{equation}
For the first term on the right hand side of \eqref{e_error_Au}, we have
\begin{equation}\label{e_error_Au_nonlinear1}
\aligned
  \exp( \frac{t^{n+1}}{T} ) & \left(  q(t^{n+1}) \textbf{u}(t^{n+1})\cdot \nabla\textbf{u}(t^{n+1}) - q^{n+1}  \textbf{u}^{n}\cdot \nabla  \textbf{u}^{n} ,  Ae_{\textbf{u}}^{n+1}+d_t e_{\textbf{u}}^{n+1} \right) \\
= & - \exp( \frac{t^{n+1}}{T} ) e_q^{n+1} \left(  ( \textbf{u}^{n}\cdot \nabla ) \textbf{u}^{n}  ,  Ae_{\textbf{u}}^{n+1}+d_t e_{\textbf{u}}^{n+1} \right) \\
& +  \left(  (  \textbf{u}(t^{n+1}) -  \textbf{u}^{n} ) \cdot \nabla \textbf{u}(t^{n+1}) ,  Ae_{\textbf{u}}^{n+1}+d_t e_{\textbf{u}}^{n+1} \right) \\
& +  \left(   \textbf{u}^{n} \cdot \nabla ( \textbf{u}(t^{n+1}) -  \textbf{u}^{n} )  ,  Ae_{\textbf{u}}^{n+1}+d_t e_{\textbf{u}}^{n+1} \right) . 
\endaligned
\end{equation}
Thanks to \eqref{e_estimate for trilinear form1} and \eqref{e_error_ubH1_boundedness2}, the first term on the right hand side of \eqref{e_error_Au_nonlinear1} can be bounded by
\begin{equation}\label{e_error_Au_nonlinear2}
\aligned
 - \exp( \frac{t^{n+1}}{T} )& e_q^{n+1} \left(   \textbf{u}^{n}\cdot \nabla  \textbf{u}^{n}  ,  Ae_{\textbf{u}}^{n+1}+d_t e_{\textbf{u}}^{n+1} \right) \\
= & - \exp( \frac{t^{n+1}}{T} ) e_q^{n+1} \left(  \textbf{u}^{n}\cdot \nabla  e_{ \textbf{u} }^{n}  ,  Ae_{\textbf{u}}^{n+1}+d_t e_{\textbf{u}}^{n+1} \right) \\
& -  \exp( \frac{t^{n+1}}{T} ) e_q^{n+1} \left(  (\textbf{u}^{n}\cdot \nabla  \textbf{u}(t^n)  ,  Ae_{\textbf{u}}^{n+1}+d_t e_{\textbf{u}}^{n+1} \right) \\
\leq & C | e_q^{n+1}| \| \textbf{u}^{n} \|^{1/2} \| \nabla \textbf{u}^{n} \|^{1/2}  \| e_{\textbf{u}}^{n} \|^{1/2}  \| A e_{ \textbf{u} }^{n} \|^{1/2} \| Ae_{\textbf{u}}^{n+1}+d_t e_{\textbf{u}}^{n+1} \| \\
& + C | e_q^{n+1}| \|  \textbf{u}^{n} \|_1  \| \textbf{u}(t^n) \|_2^2 \| Ae_{\textbf{u}}^{n+1}+d_t e_{\textbf{u}}^{n+1} \| \\
\leq & \frac{1}{12} \| d_t e_{\textbf{u}}^{n+1} \|^2 + \frac{ \nu } {24} \| A e_{\textbf{u}}^{n+1} \|^2  + \frac{ \nu } {8} \| A e_{\textbf{u}}^{n} \|^2  \\
& +  C ( \Delta t + \| \textbf{u}(t^{n}) \|_1^2 ) \| e_{\textbf{u} }^n \|^2  +  C ( \Delta t + \| \textbf{u}(t^{n}) \|_1^2 ) |e_q^{n+1}|^2  .
\endaligned
\end{equation}
The second term on the right hand side of \eqref{e_error_Au_nonlinear1} can be estimated by
\begin{equation}\label{e_error_Au_nonlinear3}
\aligned
 \big(  (  \textbf{u}(t^{n+1}) -  \textbf{u}^{n} ) &\cdot \nabla \textbf{u}(t^{n+1}) ,  Ae_{\textbf{u}}^{n+1}+d_t e_{\textbf{u}}^{n+1} \big) \\
\leq  & C \| \textbf{u}(t^{n+1}) -  \textbf{u}^{n} \|_1 \|  \textbf{u}(t^{n+1}) \|_2 \| Ae_{\textbf{u}}^{n+1}+d_t e_{\textbf{u}}^{n+1} \|  \\
\leq &   \frac{1}{12} \| d_t e_{\textbf{u}}^{n+1} \|^2 + \frac{ \nu } {24} \| A e_{\textbf{u}}^{n+1} \|^2 + C \| e_{ \textbf{u} }^n \|_1^2  \\
& + C \|  \textbf{u}(t^{n+1}) \|_2 ^2  \Delta t \int_{t^n}^{t^{n+1}}  \|\textbf{u}_t \|_1^2 dt .
\endaligned
\end{equation}
Using \eqref{e_estimate for trilinear form1} and \eqref{e_error_ubH1_boundedness2}, the last term on the right hand side of \eqref{e_error_Au_nonlinear1} can be controlled by
\begin{equation}\label{e_error_Au_nonlinear4}
\aligned
\big(  \textbf{u}^{n} \cdot \nabla ( \textbf{u}(t^{n+1}) -  \textbf{u}^{n} ) & ,  Ae_{\textbf{u}}^{n+1}+d_t e_{\textbf{u}}^{n+1} \big) \\ 
= &  \left(   \textbf{u}^{n} \cdot \nabla ( \textbf{u}(t^{n+1}) -  \textbf{u}( t^{n} ) )  ,  Ae_{\textbf{u}}^{n+1}+d_t e_{\textbf{u}}^{n+1} \right) \\
& -  \left(   \textbf{u}^{n} \cdot \nabla e_ { \textbf{u} }^n  ,  Ae_{\textbf{u}}^{n+1}+d_t e_{\textbf{u}}^{n+1} \right) \\
\leq & C \| \textbf{u}^{n} \|_1 \| \textbf{u}(t^{n+1}) -  \textbf{u}( t^{n} ) \|_2 \| Ae_{\textbf{u}}^{n+1}+d_t e_{\textbf{u}}^{n+1} \| \\
& + C \| \textbf{u}^{n} \|_1^{1/2} \| \textbf{u}^{n} \|_0^{1/2} \| A e_{ \textbf{u} }^n \|^{1/2} \| e_{ \textbf{u} }^n \|^{1/2} \| Ae_{\textbf{u}}^{n+1}+d_t e_{\textbf{u}}^{n+1} \| \\
\leq &  \frac{1}{12} \| d_t e_{\textbf{u}}^{n+1} \|^2 + \frac{ \nu } {24} \| A e_{\textbf{u}}^{n+1} \|^2  +  C ( \Delta t + \| \textbf{u}(t^{n+1}) \|_1^2 ) \| e_{\textbf{u} }^n \|^2 \\ 
&  + \frac{ \nu } {8} \| A e_{\textbf{u}}^{n} \|^2  + C ( \Delta t + \| \textbf{u}(t^{n}) \|_1^2 )   \Delta t \int_{t^n}^{t^{n+1}}  \|\textbf{u}_t \|_2^2 dt .
\endaligned
\end{equation}
For the second term on the right hand side of \eqref{e_error_Au}, we have
\begin{equation}\label{e_error_Au_nonlinear5}
\aligned
 \alpha \exp( \frac{t^{n+1}}{T} ) & \left(  q^{n+1} ( \nabla \times \textbf{b}^{n} ) \times \textbf{b}^{n}  - q(t^{n+1}) ( \nabla \times \textbf{b}(t^{n+1}) ) \times \textbf{b}(t^{n+1} ) ,  Ae_{\textbf{u}}^{n+1}+d_t e_{\textbf{u}}^{n+1} \right) \\
= &  \alpha \exp( \frac{t^{n+1}}{T} ) e_q^{n+1}  \left(   ( \nabla \times \textbf{b}^{n} ) \times \textbf{b}^{n}  ,  Ae_{\textbf{u}}^{n+1}+d_t e_{\textbf{u}}^{n+1} \right) \\
& +  \alpha  \left(   ( \nabla \times ( \textbf{b}^{n} - \textbf{b}(t^{n+1}) ) ) \times \textbf{b}^{n}  ,  Ae_{\textbf{u}}^{n+1}+d_t e_{\textbf{u}}^{n+1} \right) \\
& + \alpha  \left(   ( \nabla \times \textbf{b}(t^{n+1})  ) \times  ( \textbf{b}^{n} - \textbf{b}(t^{n+1}) )  ,  Ae_{\textbf{u}}^{n+1}+d_t e_{\textbf{u}}^{n+1} \right) . 
\endaligned
\end{equation}
Thanks to \eqref{e_norm L_infty} and  and \eqref{e_error_ubH1_boundedness2},  the first term on the right hand side of \eqref{e_error_Au_nonlinear5} can be bounded by
\begin{equation}\label{e_error_Au_nonlinear6}
\aligned
  \alpha \exp( \frac{t^{n+1}}{T} )& e_q^{n+1}  \left(   ( \nabla \times \textbf{b}^{n} ) \times \textbf{b}^{n}  ,  Ae_{\textbf{u}}^{n+1}+d_t e_{\textbf{u}}^{n+1} \right) \\
= &  \alpha \exp( \frac{t^{n+1}}{T} ) e_q^{n+1}  \left(   ( \nabla \times \textbf{b}^{n} ) \times e_{ \textbf{b} } ^{n}  ,  Ae_{\textbf{u}}^{n+1}+d_t e_{\textbf{u}}^{n+1} \right)  \\
& +  \alpha \exp( \frac{t^{n+1}}{T} ) e_q^{n+1}  \left(   ( \nabla \times \textbf{b}^{n} ) \times \textbf{b} ( t^{n} )  ,  Ae_{\textbf{u}}^{n+1}+d_t e_{\textbf{u}}^{n+1} \right) \\
\leq & C \|  \nabla \times \textbf{b}^{n} \| \|  e_{ \textbf{b} }^n \|_1^{1/2} \|  e_{ \textbf{b} }^n \|_2^{1/2} \|  Ae_{\textbf{u}}^{n+1}+d_t e_{\textbf{u}}^{n+1} \|  \\
& + C | e_q^{n+1} | \|  \nabla \times \textbf{b}^{n} \| \|  \textbf{b} ( t^{n} )  \|_2 \|  Ae_{\textbf{u}}^{n+1}+d_t e_{\textbf{u}}^{n+1} \| \\
\leq &  \frac{1}{12} \| d_t e_{\textbf{u}}^{n+1} \|^2 + \frac{ \nu } {24} \| A e_{\textbf{u}}^{n+1} \|^2  + \frac{ \eta } {8} \| \Delta e_{\textbf{b}}^{n} \|^2 \\
& +  C ( \Delta t + \| \textbf{b}(t^{n}) \|_1^2 ) \| e_{\textbf{b}}^{n} \|_1^2  +  C ( \Delta t + \| \textbf{b}(t^{n}) \|_1^2 ) | e_q^{n+1} |^2 .
\endaligned
\end{equation}
The last two terms on the right hand side of \eqref{e_error_Au_nonlinear5} can be estimated by
\begin{equation}\label{e_error_Au_nonlinear7}
\aligned
   \alpha  \big(   ( \nabla \times & ( \textbf{b}^{n} - \textbf{b}(t^{n+1}) ) ) \times \textbf{b}^{n}  ,  Ae_{\textbf{u}}^{n+1}+d_t e_{\textbf{u}}^{n+1} \big) \\
& + \alpha  \left(   ( \nabla \times \textbf{b}(t^{n+1})  ) \times  ( \textbf{b}^{n} - \textbf{b}(t^{n+1}) )  ,  Ae_{\textbf{u}}^{n+1}+d_t e_{\textbf{u}}^{n+1} \right)  \\
\leq & C \|  e_{ \textbf{b} }^n + \textbf{b}(t^{n} ) - \textbf{b}(t^{n+1})   \| _1 
\| e_{ \textbf{b} }^n  \|_1^{1/2} \| e_{ \textbf{b} }^n   \|_2^{1/2}  \| Ae_{\textbf{u}}^{n+1}+d_t e_{\textbf{u}}^{n+1} \| \\
& + C \|  e_{ \textbf{b} }^n + \textbf{b}(t^{n} ) - \textbf{b}(t^{n+1})   \| _1 
 \|  \textbf{b}(t^{n} )  \|_2  \| Ae_{\textbf{u}}^{n+1}+d_t e_{\textbf{u}}^{n+1} \| \\
& + C \|  \nabla \times \textbf{b}(t^{n+1}) \|_{L^4} \|  \textbf{b}^{n} - \textbf{b}(t^{n+1}) \| _{L^4} \| Ae_{\textbf{u}}^{n+1}+d_t e_{\textbf{u}}^{n+1} \| \\
\leq &  \frac{1}{12} \| d_t e_{\textbf{u}}^{n+1} \|^2 + \frac{ \nu } {24} \| A e_{\textbf{u}}^{n+1} \|^2  + \frac{ \eta } {8} \| \Delta e_{\textbf{b}}^{n} \|^2 \\
& + C \| e_{\textbf{b}}^{n} \|_1^2  +  C \|  \textbf{b}(t^{n+1}) \|_2 ^2 \Delta t \int_{t^n}^{t^{n+1}}  \|\textbf{b}_t \|_1^2 dt .
\endaligned
\end{equation}
For the last term on the right hand side of \eqref{e_error_Au}, we have 
\begin{equation}\label{e_error_Au_Ru}
\aligned
&  ( \textbf{R}_{\textbf{u}}^{n+1} , Ae_{\textbf{u}}^{n+1}+d_t e_{\textbf{u}}^{n+1}  ) \leq 
 \frac{1}{12} \| d_t e_{\textbf{u}}^{n+1} \|^2 + \frac{ \nu } {24} \| A e_{\textbf{u}}^{n+1} \|^2  
+ C \Delta t\int_{t^n}^{t^{n+1}}\|\textbf{u}_{tt}\|^2dt .
\endaligned
\end{equation}
Combining \eqref{e_error_Au} with \eqref{e_error_Au_nonlinear1}-\eqref{e_error_Au_Ru}, we have
\begin{equation}\label{e_error_Au_combine}
\aligned
& (1+\nu )  \frac{ \| \nabla e_{\textbf{u}}^{n+1} \|^2 - \| \nabla e_{\textbf{u}}^{n} \|^2 }{ 2\Delta t  } + \frac{1}{2} \| d_t e_{\textbf{u}}^{n+1} \|^2 + \frac{3 \nu} {4}  \| A e_{\textbf{u}}^{n+1} \|^2 \\
& \ \ \ \ \ \ 
\leq   \frac{  \eta } {4} \| \Delta e_{\textbf{b}}^{n} \|^2 + \frac{ \nu} {4}  \| A e_{\textbf{u}}^{n} \|^2 +C ( \Delta t + \| \textbf{u}(t^{n}) \|_1^2 ) \| e_{\textbf{u} }^n \|^2_1 + C ( \Delta t + \| \textbf{b}(t^{n}) \|_1^2 ) \| e_{\textbf{b} }^n \|^2_1 \\
& \ \ \ \ \ \  +C ( \Delta t + \| \textbf{u}(t^{n}) \|_1^2 + \| \textbf{b}(t^{n}) \|_1^2 ) |e_q^{n+1}|^2  \\
& \ \ \ \ \ \   + C  \Delta t \int_{t^n}^{t^{n+1}}  ( \|\textbf{u}_t \|_2^2 + \|\textbf{u}_{tt}\|^2 + \|\textbf{b}_t \|_1^2 ) dt .
\endaligned
\end{equation}
Next we shall balance the first term on the right hand side of \eqref{e_error_Au_combine}
by using the error equation \eqref{e_error_b} for magnetic field. We proceed as follows.

Taking the inner product of \eqref{e_error_b} with $ -\Delta e_{\textbf{b}}^{n+1}+d_t e_{\textbf{b}}^{n+1} $, we obtain
\begin{equation}\label{e_error_Ab}
\aligned
(1+\eta ) & \frac{ \| \nabla e_{\textbf{b}}^{n+1} \|^2 - \| \nabla e_{\textbf{b}}^{n} \|^2 }{ 2\Delta t  } + \| d_t e_{\textbf{b}}^{n+1} \|^2 + \eta \| \Delta e_{\textbf{b}}^{n+1} \|^2 \\
=&  \exp( \frac{t^{n+1}}{T} ) q(t^{n+1}) \left(  \nabla \times ( \textbf{b}(t^{n+1}) \times \textbf{u}(t^{n+1}) )  , -\Delta e_{\textbf{b}}^{n+1}+d_t e_{\textbf{b}}^{n+1} \right)  \\
&  - \exp( \frac{t^{n+1}}{T} ) q^{n+1} \left(   \nabla \times ( \textbf{b}^n \times \textbf{u}^n ) , -\Delta e_{\textbf{b}}^{n+1}+d_t e_{\textbf{b}}^{n+1} \right) \\
& + (\textbf{R}_{\textbf{b}}^{n+1}, -\Delta e_{\textbf{b}}^{n+1}+d_t e_{\textbf{b}}^{n+1} ) .
\endaligned
\end{equation}
The first two terms on the right hand side of \eqref{e_error_Ab} can be recast as
\begin{equation}\label{e_error_Ab_nonlinear1}
\aligned
 \exp( \frac{t^{n+1}}{T} ) &q(t^{n+1}) \left(  \nabla \times ( \textbf{b}(t^{n+1}) \times \textbf{u}(t^{n+1}) )  , -\Delta e_{\textbf{b}}^{n+1}+d_t e_{\textbf{b}}^{n+1} \right)  \\
&  - \exp( \frac{t^{n+1}}{T} ) q^{n+1} \left(   \nabla \times ( \textbf{b}^n \times \textbf{u}^n ) , -\Delta e_{\textbf{b}}^{n+1}+d_t e_{\textbf{b}}^{n+1} \right) \\
= & \left(  \nabla \times [ ( \textbf{b}(t^{n+1}) - \textbf{b}^n ) \times \textbf{u}(t^{n+1}) ] , -\Delta e_{\textbf{b}}^{n+1}+d_t e_{\textbf{b}}^{n+1} \right) \\
& + \left(  \nabla \times [ \textbf{b}^n  \times ( \textbf{u}(t^{n+1}) - \textbf{u}^n ) ] , -\Delta e_{\textbf{b}}^{n+1}+d_t e_{\textbf{b}}^{n+1} \right)  \\
& -  \exp( \frac{t^{n+1}}{T} ) e_q^{n+1} \left( \nabla \times ( \textbf{b}^n \times \textbf{u}^n ), -\Delta e_{\textbf{b}}^{n+1}+d_t e_{\textbf{b}}^{n+1} \right) .
\endaligned
\end{equation}
Noting \eqref{e_cross_product2_plus} and \eqref{e_estimate for trilinear form},  the first term on the right hand side of \eqref{e_error_Ab_nonlinear1} can be bounded by
\begin{equation}\label{e_error_Ab_nonlinear2}
\aligned
\big(  \nabla \times & [ ( \textbf{b}(t^{n+1}) - \textbf{b}^n ) \times \textbf{u}(t^{n+1}) ] , -\Delta e_{\textbf{b}}^{n+1}+d_t e_{\textbf{b}}^{n+1} \big) \\
\leq & C \| \textbf{b}(t^{n+1}) - \textbf{b}^n \|_1 \| \textbf{u}(t^{n+1}) \|_2 \| d_t e_{\textbf{b}}^{n+1} - \Delta e_{\textbf{b}}^{n+1} \| \\
\leq & \frac{1}{8} \| d_t e_{\textbf{b}}^{n+1}  \|^2 + \frac{ \eta }{ 16} \| \Delta e_{\textbf{b}}^{n+1} \| ^2 + C \| e_{ \textbf{b} } ^n \|^2_1 \\
& + C \| \textbf{u}(t^{n+1}) \|_2^2  \Delta t \int_{t^n}^{t^{n+1}}   \|\textbf{b}_t \|_1^2 dt .
\endaligned
\end{equation}
For the second term on the right hand side of \eqref{e_error_Ab_nonlinear1}, we have
\begin{equation}\label{e_error_Ab_nonlinear3}
\aligned
\big(  \nabla \times &[ \textbf{b}^n  \times ( \textbf{u}(t^{n+1}) - \textbf{u}^n ) ] , -\Delta e_{\textbf{b}}^{n+1}+d_t e_{\textbf{b}}^{n+1} \big)  \\
= &  \left(  \nabla \times [ e_ { \textbf{b} }^n  \times ( \textbf{u}(t^{n+1}) - \textbf{u}^n ) ] , -\Delta e_{\textbf{b}}^{n+1}+d_t e_{\textbf{b}}^{n+1} \right) \\
& +  \left(  \nabla \times [ \textbf{b} (t^n)  \times ( \textbf{u}(t^{n+1}) - \textbf{u}^n ) ] , -\Delta e_{\textbf{b}}^{n+1}+d_t e_{\textbf{b}}^{n+1} \right) \\
\leq & C \| e_ { \textbf{b} }^n \|_{1}^{1/2} \| e_ { \textbf{b} }^n \|_{2}^{1/2} \| \textbf{u}(t^{n+1}) - \textbf{u}^n \|_1 \| d_t e_{\textbf{b}}^{n+1} - \Delta e_{\textbf{b}}^{n+1} \| \\
& +  C \| \textbf{b} (t^n) \|_{2} \| \textbf{u}(t^{n+1}) - \textbf{u}^n \|_1 \| d_t e_{\textbf{b}}^{n+1} - \Delta e_{\textbf{b}}^{n+1} \| \\
\leq &  \frac{1}{8} \| d_t e_{\textbf{b}}^{n+1}  \|^2 + \frac{ \eta }{ 16} \| \Delta e_{\textbf{b}}^{n+1} \| ^2 + \frac{ \eta }{ 8} \| \Delta e_{\textbf{b}}^{n} \| ^2 + C \| e_{ \textbf{b} } ^n \|^2_1 \\
& + C \| \textbf{b}(t^{n}) \|_2^2  \Delta t \int_{t^n}^{t^{n+1}}   \|\textbf{u}_t \|_1^2 dt .
\endaligned
\end{equation}
Thanks to \eqref{e_cross_product2_plus} and \eqref{e_estimate for trilinear form}, the last term on the right hand side of \eqref{e_error_Ab_nonlinear1} can be 
\begin{equation}\label{e_error_Ab_nonlinear4}
\aligned
  -  \exp( \frac{t^{n+1}}{T} )& e_q^{n+1} \left( \nabla \times ( \textbf{b}^n \times \textbf{u}^n ), -\Delta e_{\textbf{b}}^{n+1}+d_t e_{\textbf{b}}^{n+1} \right) \\
= &  -  \exp( \frac{t^{n+1}}{T} ) e_q^{n+1} \left( \nabla \times ( e_{ \textbf{b} }^n \times \textbf{u}^n ), -\Delta e_{\textbf{b}}^{n+1}+d_t e_{\textbf{b}}^{n+1} \right) \\
& -  \exp( \frac{t^{n+1}}{T} ) e_q^{n+1} \left( \nabla \times (  \textbf{b} (t^n) \times \textbf{u}^n ), -\Delta e_{\textbf{b}}^{n+1}+d_t e_{\textbf{b}}^{n+1} \right) \\
\leq & C |e_q^{n+1}|  \| e_ { \textbf{b} }^n \|_{1}^{1/2} \| e_ { \textbf{b} }^n \|_{2}^{1/2} \|  \textbf{u} ^n \|_{1} \| d_t e_{\textbf{b}}^{n+1} - \Delta e_{\textbf{b}}^{n+1} \| \\
& + C |e_q^{n+1}|  \| \textbf{b} (t^n) \|_{2} \|  \textbf{u} ^n \|_{1} \| d_t e_{\textbf{b}}^{n+1} - \Delta e_{\textbf{b}}^{n+1} \| \\
\leq &   \frac{1}{8} \| d_t e_{\textbf{b}}^{n+1}  \|^2 + \frac{ \eta }{ 16} \| \Delta e_{\textbf{b}}^{n+1} \| ^2 + \frac{ \eta }{ 8} \| \Delta e_{\textbf{b}}^{n} \| ^2 \\
& + C ( \Delta t + \| \textbf{u}(t^{n}) \|_1^2  ) \| e_{ \textbf{b} } ^n \|^2_1 + C ( \Delta t + \| \textbf{u}(t^{n}) \|_1^2  ) |e_q^{n+1}| ^2 .
\endaligned
\end{equation}
For the last term on the right hand side of \eqref{e_error_Ab}, we have 
\begin{equation}\label{e_error_Ab_Rb}
\aligned
&  (\textbf{R}_{\textbf{b}}^{n+1}, -\Delta e_{\textbf{b}}^{n+1}+d_t e_{\textbf{b}}^{n+1} ) \leq 
 \frac{1}{8} \| d_t e_{\textbf{b}}^{n+1} \|^2 + \frac{ \eta } {16} \| \Delta  e_{\textbf{b}}^{n+1} \|^2  
+ C \Delta t\int_{t^n}^{t^{n+1}}\|\textbf{b}_{tt}\|^2dt .
\endaligned
\end{equation}
Combining \eqref{e_error_Ab} with \eqref{e_error_Ab_nonlinear1}-\eqref{e_error_Ab_Rb}, we obtain
\begin{equation}\label{e_error_Ab_combine}
\aligned
 (1+\eta ) & \frac{ \| \nabla e_{\textbf{b}}^{n+1} \|^2 - \| \nabla e_{\textbf{b}}^{n} \|^2 }{ 2\Delta t  } + \frac{1}{2} \| d_t e_{\textbf{b}}^{n+1} \|^2 + \frac{3 \eta } {4} \| \Delta e_{\textbf{b}}^{n+1} \|^2 \\
\leq &  \frac{ \eta }{ 4 } \| \Delta e_{\textbf{b}}^{n} \| ^2 + C ( \Delta t + \| \textbf{u}(t^{n}) \|_1^2  ) \| e_{ \textbf{b} } ^n \|^2_1 + C ( \Delta t + \| \textbf{u}(t^{n}) \|_1^2  ) |e_q^{n+1}| ^2 \\
& + C \Delta t \int_{t^n}^{t^{n+1}}  ( \|\textbf{u}_t \|_1^2 + \|\textbf{b}_t \|_1^2  + \|\textbf{b}_{tt}\|^2 ) dt .
\endaligned
\end{equation}
Summing up \eqref{e_error_Ab_combine} with \eqref{e_error_Au_combine} leads to
\begin{equation}\label{e_error_AuAb_final1}
\aligned
& (1+\nu )  \frac{ \| \nabla e_{\textbf{u}}^{n+1} \|^2 - \| \nabla e_{\textbf{u}}^{n} \|^2 }{ 2\Delta t  } + \frac{1}{2} \| d_t e_{\textbf{u}}^{n+1} \|^2 + \frac{3 \nu} {4}  \| A e_{\textbf{u}}^{n+1} \|^2 \\
& + (1+\eta )  \frac{ \| \nabla e_{\textbf{b}}^{n+1} \|^2 - \| \nabla e_{\textbf{b}}^{n} \|^2 }{ 2\Delta t  } + \frac{1}{2} \| d_t e_{\textbf{b}}^{n+1} \|^2 + \frac{3 \eta } {4} \| \Delta e_{\textbf{b}}^{n+1} \|^2 \\  
\leq &  \frac{  \eta } {2} \| \Delta e_{\textbf{b}}^{n} \|^2 + \frac{ \nu} {4}  \| A e_{\textbf{u}}^{n} \|^2 + C ( \Delta t + \| \textbf{u}(t^{n}) \|_1^2  ) \| e_{\textbf{u} }^n \|^2_1 \\
&   +  C ( \Delta t + \| \textbf{u}(t^{n}) \|_1^2  + \| \textbf{b}(t^{n}) \|_1^2 )(  \| e_{\textbf{b} }^n \|^2_1 +  |e_q^{n+1}|^2 ) \\
& 
 + C  \Delta t \int_{t^n}^{t^{n+1}}  ( \|\textbf{u}_t \|_2^2 + \|\textbf{u}_{tt}\|^2 + \|\textbf{b}_t \|_1^2 +  \|\textbf{b}_{tt}\|^2 ) dt .
\endaligned
\end{equation}
Multiplying \eqref{e_error_AuAb_final1} by $2\Delta t$ and summing over $n$, $n=0,2,\ldots,m$, and applying the discrete Gronwall lemma  \ref{lem: gronwall2}, we obtain
\begin{equation}\label{e_error_AuAb_final2}
\aligned
\| \nabla e_{\textbf{u}}^{m+1} \|^2& + \Delta t \sum\limits_{n=0}^{m}  \| d_t e_{\textbf{u}}^{n+1} \|^2 + \nu  \Delta t \sum\limits_{n=0}^{m} \| A e_{\textbf{u}}^{n+1} \|^2 \\
& + \| \nabla e_{\textbf{b}}^{m+1} \|^2 + \Delta t \sum\limits_{n=0}^{m}  \| d_t e_{\textbf{b}}^{n+1} \|^2  +  \eta  \Delta t \sum\limits_{n=0}^{m} \| \Delta e_{\textbf{b}}^{n+1} \|^2 \\
\leq &  C ( \Delta t + \| \textbf{u}(t^{n}) \|_1^2  + \| \textbf{b}(t^{n}) \|_1^2 )  \Delta t \sum\limits_{n=0}^{m} ( \| e_{\textbf{u} }^n \|^2_1 + \| e_{\textbf{b} }^n \|^2_1 )  \\
& +  C \Delta t \sum\limits_{n=0}^{m} | e_q^{n+1} |^2 + C (\Delta t)^2 .
\endaligned
\end{equation}
Combining the above estimate with Theorem \ref{thm: error_estimate_ubq}, we finally obtain  
\begin{equation}\label{e_error_AuAb_final3}
\aligned
\Delta t& \sum\limits_{n=0}^{m}  \| d_t e_{\textbf{u}}^{n+1} \|^2 + \| \nabla e_{\textbf{u}}^{m+1} \|^2 +  \nu  \Delta t \sum\limits_{n=0}^{m} \| A e_{\textbf{u}}^{n+1} \|^2 + \| \nabla e_{\textbf{b}}^{m+1} \|^2  \\
& + \Delta t \sum\limits_{n=0}^{m}  \| d_t e_{\textbf{b}}^{n+1} \|^2  +  \eta  \Delta t \sum\limits_{n=0}^{m} \| \Delta e_{\textbf{b}}^{n+1} \|^2 
\leq   C (\Delta t)^2 .
\endaligned
\end{equation}

We are now in position to prove the pressure estimate. 
 Taking the inner product of \eqref{e_error_u} with $\textbf{v}\in \textbf{H}^1_0(\Omega)$, we obtain
\begin{equation}\label{e_error_p1}
\aligned
(\nabla e_p^{n+1},\textbf{v})=&-( d_t e_{\textbf{u} }^{n+1} , \textbf{v})+\nu(\Delta e_{\textbf{u}}^{n+1},\textbf{v})+(\textbf{R}_{\textbf{u}}^{n+1},\textbf{v})\\
&+ \exp( \frac{t^{n+1}}{T} )  \left( q(t^{n+1}) ( \textbf{u}(t^{n+1})\cdot \nabla ) \textbf{u}(t^{n+1}) - q^{n+1} ( \textbf{u}^{n}\cdot \nabla ) \textbf{u}^{n}, \textbf{v}\right) \\
& +  \alpha \exp( \frac{t^{n+1}}{T} )  \left( q^{n+1} ( \nabla \times \textbf{b}^{n} ) \times \textbf{b}^{n} - q(t^{n+1}) ( \nabla \times \textbf{b}(t^{n+1}) ) \times \textbf{b}(t^{n+1} ) , 
\textbf{v} \right) . 
\endaligned
\end{equation}
 We derive from
\begin{equation}\label{e_error_p2}
\aligned
&\| e_p^{n+1} \|_{L^2(\Omega)/\mathbb{R}} \leq \sup_{\textbf{v} \in \textbf{H}^1_0(\Omega)} \frac{
(\nabla e_p^{n+1},\textbf{v}) }{ \|\nabla \textbf{v} \| },
\endaligned
\end{equation} 
and \eqref{e_error_u_nonlinear_convective}-\eqref{e_error_u_nonlinear_convective2} that, for all $\textbf{v}\in \textbf{H}^1_0(\Omega)$, 
\begin{equation}\label{e_error_p3}
\aligned
 \exp( \frac{t^{n+1}}{T} )&  \left( q(t^{n+1}) ( \textbf{u}(t^{n+1})\cdot \nabla ) \textbf{u}(t^{n+1}) - q^{n+1} ( \textbf{u}^{n}\cdot \nabla ) \textbf{u}^{n}, \textbf{v}\right) \\
=&\frac{q(t^{n+1}) }{ \exp(  -\frac{t^{n+1}}{T} ) }\left( ( \textbf{u}(t^{n+1})-\textbf{u}^{n} )\cdot \nabla \textbf{u}(t^{n+1}),  \textbf{v} \right) -\frac{e_q^{n+1}}{ \exp(  -\frac{t^{n+1}}{T} ) }\left((\textbf{u}^n \cdot \nabla)\textbf{u}^n,  \textbf{v} \right) \\
&+\frac{q(t^{n+1}) }{ \exp(  -\frac{t^{n+1}}{T} ) }\left(  \textbf{u}^{n}\cdot \nabla (\textbf{u}(t^{n+1})-\textbf{u}^{n} ),  \textbf{v} \right) \\
\leq & C(\|e_{ \textbf{u} }^{n}\|_1 +\| \int_{t^n}^{t^{n+1}}\textbf{u}_tdt \|_1+ |e_q^{n+1}| )  \|\nabla \textbf{v} \| ,
\endaligned
\end{equation}
and for the last term on the right hand side of \eqref{e_error_p1}, by using \eqref{e_error_u_Lorentz_force}-\eqref{e_error_u_Lorentz_force5}, we have
\begin{equation}\label{e_error_p4}
\aligned
  \alpha \exp( \frac{t^{n+1}}{T} ) & \left( q^{n+1} ( \nabla \times \textbf{b}^{n} ) \times \textbf{b}^{n} - q(t^{n+1}) ( \nabla \times \textbf{b}(t^{n+1}) ) \times \textbf{b}(t^{n+1} ) , 
\textbf{v} \right) \\
= &  \alpha \exp( \frac{t^{n+1}}{T} ) e_q^{n+1}  \left( ( \nabla \times \textbf{b}^{n} ) \times \textbf{b}^{n}, e_{\textbf{u}}^{n+1} \right) + \alpha \left(  \nabla \times (\textbf{b}^{n}- \textbf{b}(t^{n+1}) ) \times \textbf{b}^{n}, e_{\textbf{u}}^{n+1} \right) \\
& + \alpha \left(  ( \nabla \times \textbf{b} (t^{n+1}) ) \times ( \textbf{b}^{n} - \textbf{b}(t^{n+1}) ), e_{\textbf{u}}^{n+1} \right) \\
\leq &  C( \|e_{ \textbf{b} }^{n}\|_1 + \| \textbf{b}^n \| \| \int_{t^n}^{t^{n+1}}\textbf{b}_tdt \|_1+ |e_q^{n+1}| )  \|\nabla \textbf{v} \| .
\endaligned
\end{equation}
Finally thanks to Theorem \ref{thm: error_estimate_ubq} and \eqref{e_error_AuAb_final3}, we can derive from the above that 
\begin{equation*}\label{e_error_p5}
\aligned
&\Delta t \sum\limits_{n=0}^m \|e_p^{n+1}\|^2_{L^2(\Omega)/\mathbb{R}} \leq 
C\Delta t \sum\limits_{n=0}^m \left( \| d_te_{\textbf{u}}^{n+1}\|^2+ \| \nabla e_{\textbf{u}}^{n+1} \|^2 \right. \\
&\ \ \ \ \ 
\left. +  \| e_{\textbf{u}}^{n} \|_1^2+\|e_{\textbf{b}}^n\|_1^2 + |e_q^{n+1}|^2 \right) +C (\Delta t)^2 \int_{t^0}^{t^{m+1}} \|\textbf{b}_t\|_1^2dt  \\
&\ \ \ \ \  +C (\Delta t)^2 \int_{t^0}^{t^{m+1}} (\|\textbf{u}_t\|_1^2+\|\textbf{u}_{tt}\|_{-1}^2 ) dt   \leq  C(\Delta t)^2.
\endaligned
\end{equation*}
The proof  is complete.
\end{proof}

%==================================================================
 \section{Numerical experiments} 
 In this section we provide some numerical experiments to validate the SAV schemes developed in the previous sections.  
 
Although we only discussed semi-discretization in time in the previous sections, the IMEX
SAV schemes can be coupled with any compatible spatial discretization. More precisely, let $ \textbf{X}_h \subset \textbf{H}^1_0( \Omega ) $, $ M_h \subset L^2_0( \Omega) $ and $ \textbf{W}_h \subset \textbf{H}^1_n( \Omega ) $ be a set of compatible approximation spaces for the velocity, pressure and magnetic field, a fully discrete first-order IMEX SAV scheme is as follows: ($ \textbf{u}_h^{n+1}, p_h^{n+1}, \textbf{b}_h^{n+1} $) in 
($\textbf{X}_h, M_h, \textbf{W}_h$) and $q_h^{n+1} \in \mathbb{R}$ such that
     \begin{eqnarray}
   && ( d_t \textbf{u}_h^{n+1}, \textbf{v}_h ) + \nu (\nabla \textbf{u}_h^{n+1}, \textbf{v}_h ) - (p_h^{n+1}, \nabla\cdot \textbf{v}_h ) =   \alpha \exp( \frac{t^{n+1} }{T} ) q^{n+1}_h \left(  (\nabla \times \textbf{b}^n_h)  \times \textbf{b}^n_h,  \textbf{v}_h   \right)
   \nonumber \\
&& \ \ \ \ \ \ \ \ \ \ 
- \exp( \frac{t^{n+1} }{T} ) q^{n+1}_h ( \textbf{u}^{n}_h \cdot \nabla \textbf{u}^{n}_h ), 
\textbf{v}_h ),   \ \ \forall 
 \textbf{v}_h \in \textbf{X}_h,   \label{e_SAV_Fully discrete_first_u} \\
&&   ( \nabla \cdot \textbf{u}_h^{n+1}, \xi_h)=0, \ \ \forall  \xi_h \in M_h, 
\label{e_SAV_Fully div_u} \\
 &&  (d_t \textbf{b}_h^{n+1}, \textbf{w}_h ) + \eta ( \nabla \times \textbf{b}_h^{n+1} , \nabla \times \textbf{w}_h ) + \eta (\nabla \cdot \textbf{b}_h^{n+1}, \nabla \cdot \textbf{w}_h) \nonumber \\
&& \ \ \ \ \ \ \ \ \ \ 
+ \exp( \frac{t^{n+1} }{T} ) q^{n+1}_h \left( \nabla \times ( \textbf{b}^n_h \times \textbf{u}^n_h ),  \textbf{w}_h \right) = 0,  \ \ \forall \textbf{w}_h \in \textbf{W}_h, 
\label{e_SAV_Fully discrete_first_b} \\   
&&  d_t q^{n+1}_h = -\frac{1}{T}q^{n+1}_h + \exp( \frac{t^{n+1} }{T} ) \nonumber\\
&& \big( (\textbf{u}^n_h \cdot\nabla  \textbf{u}^n_h, \textbf{u}^{n+1}_h ) - \alpha  ( ( \nabla \times \textbf{b}^n_h )  \times \textbf{b}^n_h , \textbf{u}^{n+1}_h )
+ \alpha(  \nabla \times  (\textbf{b}^n_h  \times \textbf{u}^n_h )  , \textbf{b}^{n+1}_h )\big).\label{e_SAV_Fully discrete_first_q} 
     \end{eqnarray} 
 Second-order fully discrete IMEX SAV scheme can be constructed similarly.
 
 Following the same procedure as in the proof of Theorem \ref{thm_energy stability_first order}, namely, setting $\textbf{v}_h=\textbf{u}_h^{n+1}$, $\xi_h=p_h^{n+1}$, $\textbf{w}_h= \alpha \textbf{b}_h^{n+1}$ in \eqref{e_SAV_Fully discrete_first_u}-\eqref{e_SAV_Fully discrete_first_b} respectively and taking the inner product of \eqref{e_SAV_Fully discrete_first_q} with $q_h^{n+1}$, we can obtain the following stability result: The scheme \eqref{e_SAV_Fully discrete_first_u}-\eqref{e_SAV_Fully discrete_first_q} is unconditionally stable in the sense that
 \begin{equation}\label{fully discrete_eng}
\aligned
E_h^{n+1}-E_h^{n} \leq & - \nu\Delta t \| \nabla \textbf{u}_h^{n+1} \|^2 -  \eta \alpha \Delta t \| \nabla \textbf{b}_h^{n+1} \|^2 \\
& -  \eta \alpha \Delta t \| \nabla \times \textbf{b}_h^{n+1} \|^2  -\frac{1}{T}\Delta t|q_h^{n+1}|^2, \ \ \forall \Delta t,\; n\geq 0,
\endaligned
\end{equation} 
where 
\begin{equation*}
E_h^{n+1}=\frac 12 \|\textbf{u}_h^{n+1}\|^2 + \frac \alpha 2 \|\textbf{b}_h^{n+1}\|^2 +\frac 12|q_h^{n+1}|^2 .
\end{equation*}

In our simulation, we use $(P_2,P_1,P_2)$ finite-elements to approximate velocity, pressure and magnetic field, respectively. Note that the $(P_2,P_1)$ finite-elements for velocity and pressure satisfy the inf-sup conditions so that one can easily show that the fully discrete scheme \eqref{e_SAV_Fully discrete_first_u}-\eqref{e_SAV_Fully discrete_first_q} coupled with $(P_2,P_1,P_2)$ finite elements are well posed and can be solved following the procedure described in Section 2.
 
% \subsection{ Accuracy test for the SAV schemes}
In this example, we set $\Omega=(0,1)\times(0,1)$, $\nu=0.01$, $\eta=0.01$, $\alpha=1$, $T=1$. The right hand side of the equations  is computed according to the analytic solution given as below:\\
\begin{equation*}
\aligned
\begin{cases}
u_1(x,y,t)=\pi k \sin^2(\pi x)\sin(\pi y)\cos(t),\\
u_2(x,y,t)=-\pi k \sin(\pi x)\sin^2(\pi y)\cos(t),\\
p(x,y,t)=k (x-1/2)(y-1/2)\cos(t)/10,\\
b_1(x,y,t)=k \sin(\pi x)\cos(\pi y)\cos(t),\\
b_2(x,y,t)=-k \cos(\pi x)\sin(\pi y)\cos(t),
\end{cases}
\endaligned
\end{equation*}
where $k=0.01$. To test the time accuracy, we choose $h=0.005$  so that the spatial discretization error is negligible compared to the time discretization error for the time steps used in this experiment.

 \begin{table}[htbp]
\renewcommand{\arraystretch}{1.1}
\small
\centering
\caption{Errors and convergence rates with the first-order scheme \eqref{e_SAV_scheme_first_u}-\eqref{e_SAV_scheme_first_q}}\label{table1}
\begin{tabular}{p{1.2cm}p{1.8cm}p{1.3cm}p{1.8cm}p{1.3cm}p{1.8cm}p{1.3cm}}\hline
$\Delta t$  &$\|\mathbf{u}_h-\mathbf{u}\|_{H^1}$ &Order   &$\|\mathbf{u}_h-\mathbf{u}\|_{L^2}$&Order &$\|p_h-p\|_{L^2}$ &Order   \\ \hline
1/2   & 8.26E-3  & ---    &1.34E-3  &---      &2.66E-5  &--- \\    
1/4   & 3.96E-3 &1.06   &7.16E-4  &0.91   &1.16E-5  &1.12 \\
1/8   & 1.93E-3 &1.04   &3.70E-4  &0.95   &5.41E-6  &1.10  \\
1/16 & 9.52E-4 &1.04   &1.89E-4  &0.97   &2.61E-6  &1.05   \\
1/32 &4.72E-4  &1.01   &9.51E-5  &0.99   &1.28E-6  &1.03    \\
1/64 &2.35E-4  &1.01   &4.78E-5  &0.99   &6.33E-7 &1.01   \\ \hline
\end{tabular}
\end{table}

 \begin{table}[htbp]
\renewcommand{\arraystretch}{1.1}
\small
\centering
\caption{Errors and convergence rates with the first-order scheme \eqref{e_SAV_scheme_first_u}-\eqref{e_SAV_scheme_first_q} }\label{table2}
\begin{tabular}{p{1.2cm}p{2.0cm}p{1.5cm}p{2.0cm}p{1.5cm}}\hline
$\Delta t$  &$\|\mathbf{b}_h-\mathbf{b}\|_{H^1}$& Order &$\|\mathbf{b}_h-\mathbf{b}\|_{L^2}$&Order   \\ \hline
1/2   &4.52E-3  & ---    &1.22E-3  & --- \\  
1/4   &2.10E-3  &1.11  &6.39E-4  &0.94 \\
1/8   &1.00E-3  &1.07  &3.27E-4  &0.97  \\
1/16 &4.89E-4  &1.04  &1.65E-4  &0.98   \\
1/32 &2.41E-4  &1.02  &8.31E-5  &0.99    \\
1/64 &1.20E-4  &1.01  &4.17E-5  &1.00   \\ \hline
\end{tabular}
\end{table}

 \begin{table}[htbp]
\renewcommand{\arraystretch}{1.1}
\small
\centering
\caption{Errors and convergence rates with the second-order scheme \eqref{e_SAV_scheme_second_u}-\eqref{e_SAV_scheme_second_q} }\label{table3}
\begin{tabular}{p{1.2cm}p{1.8cm}p{1.3cm}p{1.8cm}p{1.3cm}p{1.8cm}p{1.3cm}}\hline
$\Delta t$  &$\|\mathbf{u}_h-\mathbf{u}\|_{H^1}$ &Order   &$\|\mathbf{u}_h-\mathbf{u}\|_{L^2}$&Order &$\|p_h-p\|_{L^2}$ &Order   \\ \hline
1/2   &6.43E-3 & ---   &8.84E-4   &---   &1.94E-5 & --- \\   
1/4   &1.99E-3 &1.70 &2.32E-4  &1.93 &5.23E-6 &1.89 \\
1/8   &5.49E-4 &1.85 &5.35E-5  &2.12 &1.38E-6 &1.92 \\
1/16 &1.44E-4 &1.93 &1.26E-5  &2.09 &3.53E-7 &1.96  \\
1/32 &3.70E-5 &1.96 &3.05E-6  &2.04 &8.92E-8 &1.99    \\
1/64 &1.03E-5 &1.85 &7.52E-7  &2.02 &2.24E-8 &1.99    \\ \hline
\end{tabular}
\end{table}

 \begin{table}[htbp]
\renewcommand{\arraystretch}{1.1}
\small
\centering
\caption{Errors and convergence rates with the second-order scheme \eqref{e_SAV_scheme_second_u}-\eqref{e_SAV_scheme_second_q} }\label{table4}
\begin{tabular}{p{1.2cm}p{2.0cm}p{1.5cm}p{2.0cm}p{1.5cm}}\hline
$\Delta t$  &$\|\mathbf{b}_h-\mathbf{b}\|_{H^1}$& Order &$\|\mathbf{b}_h-\mathbf{b}\|_{L^2}$&Order   \\ \hline
1/2   &3.54E-3  & ---    &8.38E-4  & --- \\  
1/4   & 1.06E-3 &1.74  &2.30E-4 &1.87 \\
1/8   & 2.90E-4 &1.88  &5.57E-5 &2.05 \\
1/16 & 7.54E-5 &1.94  &1.35E-5 &2.04 \\
1/32 & 1.92E-5 &1.97  &3.32E-6 &2.02 \\
1/64 &4.88E-6  &1.98 &8.23E-7 &2.01   \\ \hline
\end{tabular}
\end{table}

Numerical results for this example  with first- and second-order schemes are presented in Tables \ref{table1}-\ref{table4}. We observe that the results for the first-order scheme \eqref{e_SAV_scheme_first_u}-\eqref{e_SAV_scheme_first_q} are consistent with the error estimates in Theorems  \ref{thm: error_estimate_ubq} and \ref{thm: error_estimate_p}. While second-order convergence rates for the velocity, pressure and magnetic field were observed for the second-order scheme \eqref{e_SAV_scheme_second_u}-\eqref{e_SAV_scheme_second_q}.

\section{Concluding remarks}
We constructed first- and second-order discretization schemes in time based on the SAV approach  for the MHD equations. The nonlinear terms are treated explicitly in our schemes so they only require solving a sequence of linear differential equations with constant coefficients at each time step. Thus, the schemes are efficient and easy to implement.

 Despite the fact that the nonlinear terms are treated explicitly, we proved that our schemes are unconditionally energy stable. This is made possible by introducing a purely artificial scalar auxiliary variable, $q(t)$, which enables  the nonlinear contributions to the energy to cancel with each other as in the continuous case, leading to the  unconditionally energy stability.

By using the unconditional energy result which leads to uniform bound on  the numerical solution , we derived rigorous error estimates for the velocity, pressure and magnetic field of the first-order scheme in the two-dimensional case without any condition on the time step. To the best of our knowledge, this is the first linear, unconditional energy stable and convergent scheme  with fully explicit treatment for the MHD equations. We believe that the error estimates can also be established for the second-order scheme in the two-dimensional case although the process will surely be much more tedious. However, it appear that the error estimates can not be easily extended to the three dimensional case as our proof uses essentially some inequalities which are only valid in  the two-dimensional case.

%==================================================================

%==================================================================
% \section*{References}
\bibliographystyle{siamplain}
\bibliography{MHD_SAV}

\end{document}